\documentclass[a4paper]{amsart}
\usepackage{graphicx}
\tolerance=5000
\theoremstyle{plain}
  \newtheorem{thm}{Theorem}[section]
  \newtheorem{lem}[thm]{Lemma}
  \newtheorem{cor}[thm]{Corollary}
  \newtheorem{prop}[thm]{Proposition}

\theoremstyle{definition}
  
  \newtheorem{ex}[thm]{Example}

\theoremstyle{remark}
  
  \newtheorem*{ack}{Acknowledgments}
\newcommand{\Z}{\mathbb{Z}}
\newcommand{\Zpos}{\mathbb{Z}_{> 0}}

\newcommand{\cyclic}[1]{\Z/#1\Z}
\newcommand{\cover}[2]{#1^{(#2)}}
\newcommand{\baseSet}[1]{B_{#1}}
\newcommand{\nanoword}[2]{#1\colon\hspace{-1mm}#2}
\newcommand{\link}[2]{l(#1,#2)}
\newcommand{\linkpattern}[4]{\dotso #1 \dotso #2 \dotso #3 \dotso #4 \dotso}
\newcommand{\trivial}{0}
\newcommand{\xType}[1]{\lvert#1\rvert}
\newcommand{\abs}[1]{\lvert#1\rvert}
\newcommand{\setStrings}{\mathcal{VS}}
\newcommand{\transpose}[1]{{}^t#1}
\newcommand{\cvector}[1]{\overrightarrow{#1}}
\newcommand{\cable}[2]{#1_{(#2)}}
\newcommand{\textcable}[1]{$#1$-cable}
\newcommand{\textcover}[1]{$#1$-covering}

\DeclareMathOperator{\rank}{rank}
\DeclareMathOperator{\hr}{hr}
\DeclareMathOperator{\height}{height}
\DeclareMathOperator{\base}{base}
\DeclareMathOperator{\n}{n}
\DeclareMathOperator{\m}{m}

\numberwithin{equation}{section}
\hyphenation{Yoko-ta Mura-kami}
%
%
\begin{document}
\title{Coverings, Composites and Cables of Virtual Strings}
\author{Andrew Gibson}
\address{
Department of Mathematics,
Tokyo Institute of Technology,
Oh-okayama, Meguro, Tokyo 152-8551, Japan
}
\email{gibson@math.titech.ac.jp}
\date{\today}
\begin{abstract}
A virtual string can be defined as an equivalence class of planar
 diagrams under certain kinds of diagrammatic moves. Virtual strings are
 related to virtual knots in that a simple operation on a virtual knot
 diagram produces a diagram for a virtual string.
\par
In this paper we consider three operations on a virtual string or
 virtual strings which produce another virtual string, namely covering,
 composition and cabling.
In particular we study virtual strings unchanged by the covering
 operation. 
We also show how the based matrix of a composite virtual string
 is related to the based matrices of its components, correcting a result
 by Turaev.
Finally we investigate what happens under cabling to some
 invariants defined by Turaev.
\end{abstract}
\keywords{virtual strings, virtual knots}
\subjclass[2000]{Primary 57M25; Secondary 57M99}
\thanks{This work was supported by a Scholarship from the Ministry of
Education, Culture, Sports, Science and Technology of Japan. Most of
the contents of this paper come from part of the author's Master's
thesis submitted to the Tokyo Institute of Technology in March 2008
\cite{Gibson:mthesis}.} 
\maketitle
\section{Introduction}
Kauffman introduced the idea of virtual knots in
\cite{Kauffman:VirtualKnotTheory}. Virtual knots are defined as
equivalence classes of virtual knot diagrams under diagrammatic moves
which include the usual Reidemeister moves and other similar moves
involving virtual crossings.
\par
A virtual string diagram is a virtual knot diagram where the over and
under crossing information at the real crossings has been removed.
In other words, the real crossings are treated simply as double points. 
We call this operation \emph{flattening}.
By flattening the diagrammatic
moves given by Kauffman we can derive moves for virtual string
diagrams. A virtual string is then an equivalence class of virtual
string diagrams under these moves. A complete definition will be given
in Section~\ref{sec:virtual_strings}.
In some parts of the literature virtual strings are known by other
names. For example in \cite{Hrencecin/Kauffman:Filamentations} they are
called flat knots or flat virtual knots, in
\cite{Kadokami:Non-triviality} projected virtual knots, and in
\cite{CKS:StableEquivalence} universes of virtual knots.
\par
In \cite{Turaev:2004}, Turaev defines virtual strings in terms of
diagrams consisting of a circle and a finite set of ordered pairs of
distinct points on the circle. This definition can be shown to be
equivalent to the definition given above. In that paper, Turaev defines
the $u$-polynomial and the primitive based matrix which are invariants of
virtual strings. We recall these definitions in this paper.
\par
Two virtual knot diagrams representing the same virtual knot are related
by a sequence of moves. By flattening the moves we get a sequence of
flattened moves relating the corresponding flattened diagrams.
From this we can see that the virtual string derived from
the flattening of a particular virtual knot diagram is actually an
invariant of the virtual knot which the diagram represents. Of course,
many virtual knots may have the same underlying virtual string. For
example, the virtual string underlying every classical knot is the
trivial virtual string. 
On the other hand, the virtual string underlying Kishino's knot
(Figure~\ref{fig:kishino}) is not trivial
\cite{Fenn/Turaev:WeylAlgebras} and this shows that Kishino's knot is
not a trivial virtual knot.  
Kishino, using a different method, was the first to prove the
non-triviality of this virtual knot \cite{Kishino/Shin:VirtualKnots}.
Various other methods of proof have been found and these are summarised
in Problem~1 of the list of problems in \cite{FKM:Unsolved}.
However, we note that the method used by Kadokami to prove
non-triviality of the virtual string underlying Kishino's knot
\cite{Kadokami:Non-triviality} is based on a theorem in that paper with
which we found a problem. This problem is explained in
\cite{Gibson:tabulating-vs}.
\begin{figure}[hbt]
\begin{center}
\includegraphics{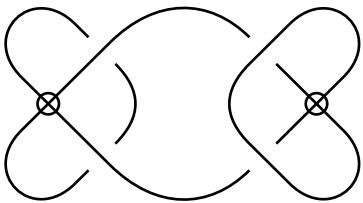}
\caption{A virtual knot known as Kishino's knot}
\label{fig:kishino}
\end{center}
\end{figure}
\par
In this paper we study three different operations on virtual strings
which produce new virtual strings.
\par
In \cite{Turaev:2004}, for each non-negative integer $r$, Turaev defined
an operation on a virtual string called an $r$-covering which produces
another virtual string.
Thus we can consider an $r$-covering to be a map from the set of virtual
strings to itself.
The result of an $r$-covering is an invariant of the original virtual
string \cite{Turaev:2004}.
Thus invariants of the new virtual string may be considered as
invariants of the original virtual string.
Of course, it is possible to take the covering of the new virtual string
and get a hierarchy of virtual strings derived from the original one.
We study certain questions about this operation.
We show that $r$-covering is surjective for all $r$ and when $r$ is not
$1$, there are an infinite number of virtual strings that map to any
given virtual string under $r$-covering.
We also show that for all $r$ the set of virtual strings unchanged by
the $r$-covering operation is infinite.
\par
Given two virtual string diagrams we can make a composite virtual string
diagram by cutting the curve in each diagram and joining them to each
other to make a single curve. This operation is not well-defined for
virtual strings as the result depends on the points where the curves in
the original diagrams were cut. However, it is still possible to examine
how the invariants of virtual strings created in this way are related to
the invariants of the virtual strings from which they are constructed.
\par
For classical knots, invariants of cables of knots can be used to
distinguish knots where the same invariants calculated directly on the knots
themselves are the same. We define a cabling of a virtual string and
study what happens to Turaev's invariants under this operation. We
discovered that we do not gain any more information from Turaev's
invariants in this way.
We show how Turaev's invariants for the cable of a virtual string can be
calculated from the same invariant for the virtual string itself.
\par
In Section~\ref{sec:virtual_strings} we give a formal definition of
virtual strings. In this paper we will often use Turaev's nanoword notation
\cite{Turaev:KnotsAndWords} to represent virtual strings. This notation
is explained in Section~\ref{sec:nanowords}. In the same section we also
recall the definition of Turaev's $u$-polynomial.
\par
In Section~\ref{sec:th_matrices} we define the head and tail matrices of
a virtual string diagram. We use these when we recall the definition of
Turaev's based matrices in Section~\ref{sec:based_matrices}. From a
based matrix we can derive a primitive based matrix which is another
invariant of virtual strings.
\par
In Section~\ref{sec:coverings} we recall the definition of covering,
make some observations about the operation and define some invariants of
virtual strings from it. Then in Section~\ref{sec:fixed} we consider
fixed points under covering. In Section~\ref{sec:geomcover} we explain a
geometric intrepretation of the covering operation.
\par
In Section~\ref{sec:composite}, we show how the based matrix of a 
composite virtual string is related to the based matrices of its components. 
\par
In Section~\ref{sec:cable} we consider cables of virtual strings. We
show that the $u$-polynomial of a cable can be calculated directly from
the $u$-polynomial of the original virtual string. We also show a
corresponding result for based matrices. In the same section we also
show a relationship between coverings of cables and cables of
coverings. We conclude that if we have a pair of virtual strings with
the same $u$-polynomial, based matrix and coverings then the
corresponding invariants of an \textcable{n} of each virtual string will
also be equivalent.
\begin{ack}
The author would like to thank his supervisor Hitoshi Murakami for all
 his help and advice.
\end{ack}
\section{Virtual strings}\label{sec:virtual_strings}
A virtual string diagram is an oriented circle immersed in a
plane. Self-intersections are permitted but at most two arcs can cross
at any particular point and they should cross transversally. We call
such self-intersections crossings and we allow two kinds: real
crossings, which are unmarked; and virtual crossings, each of which is
marked with a small circle (see Figure~\ref{fig:virtualcrossings}).
An example of a virtual string diagram is given in
Figure~\ref{fig:diagram31}, where the orientation of the circle is
marked by an 
arrow.
\begin{figure}[hbt]
\begin{center}
\includegraphics{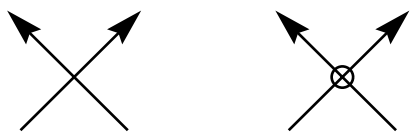}
\caption{The two kinds of crossing: real (left) and virtual (right)}
\label{fig:virtualcrossings}
\end{center}
\end{figure}
\begin{figure}[hbt]
\begin{center}
\includegraphics{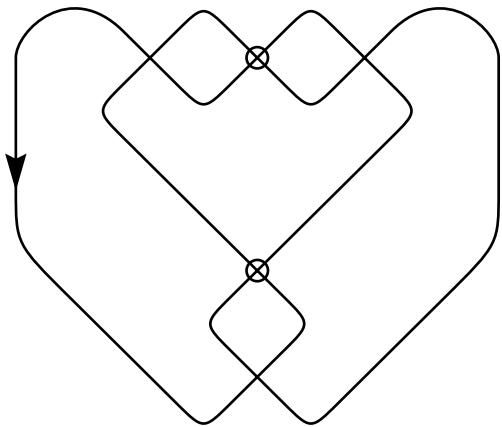}
\caption{A non-trivial virtual string}
\label{fig:diagram31}
\end{center}
\end{figure}
\par
Moves have been defined for virtual string diagrams
\cite{Kauffman:VirtualKnotTheory}.
There are moves involving only
real crossings which are shown in Figure~\ref{fig:reidermeister}. These
are called the flattened Reidemeister moves because they are like the
standard Reidemeister moves of knot theory but with crossings
flattened (see, for example, \cite{Lickorish:1997} for more information
about standard Reidemeister moves). 
There are a similar set of moves only involving virtual crossings. We call
these the virtual flattened Reidemeister moves and they are shown in
Figure~\ref{fig:vreidermeister}. Lastly there is a move that involves
both real and virtual crossings. It is shown in
Figure~\ref{fig:mixedmove} and we call it the mixed move. Collectively,
flattened Reidemeister moves, virtual flattened Reidemeister moves and
the mixed move are called homotopy moves.
\par
\begin{figure}[hbt]
\begin{center}
\includegraphics{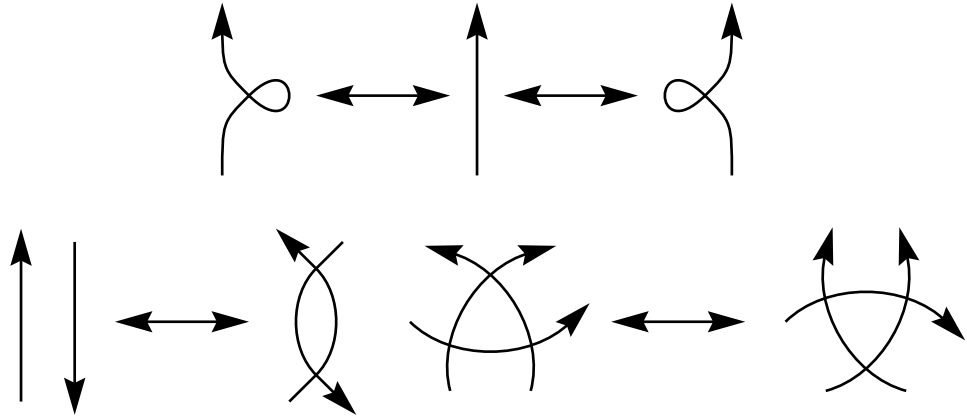}
\caption{The flattened Reidemeister moves}
\label{fig:reidermeister}
\end{center}
\end{figure}
\begin{figure}[hbt]
\begin{center}
\includegraphics{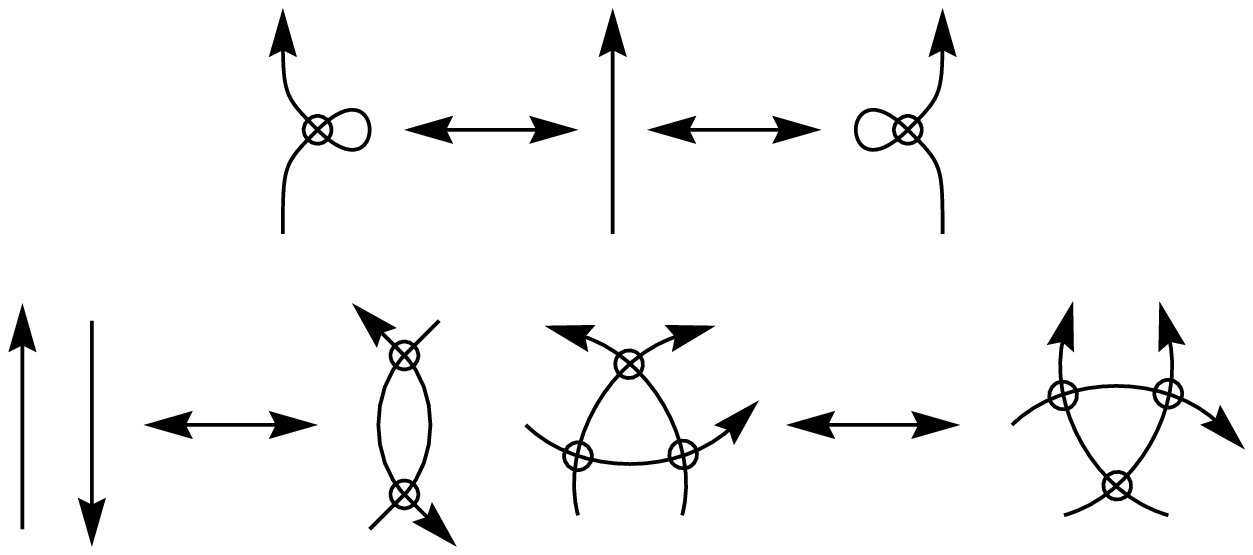}
\caption{The virtual flattened Reidemeister moves}
\label{fig:vreidermeister}
\end{center}
\end{figure}
\begin{figure}[hbt]
\begin{center}
\includegraphics{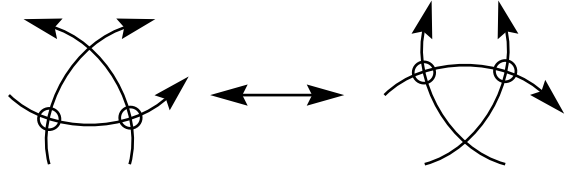}
\caption{The mixed move}
\label{fig:mixedmove}
\end{center}
\end{figure}
If a pair of virtual string diagrams are related by a finite sequence of
homotopy moves and ambient isotopies in the plane, we say that they are
equivalent under homotopy, or, more 
simply, that they are homotopic. It is not hard to see that
equivalence under homotopy is an equivalence relation. Virtual
strings are the equivalence classes of virtual string diagrams under
this relation. We say that a 
virtual string $\Gamma$ is represented by a particular diagram $D$ if
the equivalence class under homotopy containing $D$ is $\Gamma$.
\par
Let $\setStrings$ denote the set of equivalence classes, under homotopy,
of virtual strings that can be represented by a diagram with a finite
number of double points. In this paper we only consider virtual strings
that are in $\setStrings$.
\par
There is a unique virtual string represented by a diagram with no
double points, real or virtual. It is called the trivial virtual string
and is written $\trivial$.
\par
We can think of virtual strings in another way. We consider pairs
$(S,D)$ where $S$ is a compact oriented surface and $D$ is an
immersion of an oriented circle in $S$. As before we only allow
self-intersections in $D$ to be transverse double points. This time all
such crossings are real. Virtual crossings are not permitted.
\par
For a pair $(S,D)$, we define $N(D)$ to be the regular
neighbourhood of $D$ in $S$. A stable homeomorphism from a pair
$(S_1,D_1)$ to a pair $(S_2,D_2)$ is an orientation preserving
homeomorphism from $N(D_1)$ to $N(D_2)$. Here orientation preserving
means both the orientation of the surface and of the curve itself are
preserved. Two pairs are stably equivalent if there exists a finite
sequence of stable homeomorphisms and flattened Reidemeister moves
in the surface transforming one pair to the other.
\par
Clearly stable equivalence is an equivalence relation. There is a
bijection between the set of equivalence classes under this relation and
the set of virtual strings. This was shown by Kadokami in
\cite{Kadokami:Non-triviality}, following a result of Carter, Kamada and
Saito which relates virtual knots to non-virtual diagrams on oriented
surfaces \cite{CKS:StableEquivalence}.
The bijection can be visualized as follows.
From a virtual string diagram we can construct a
pair $(S,D)$ by replacing virtual crossings with handles in the plane
and routing one arc over the handle (see Figure~\ref{fig:handle}).
On the other hand, with some care, we can take a pair $(S,D)$ and project
$D$ onto a plane so that the only self-intersection points are double
points. Any double points that do not correspond to double points in $D$
are marked as virtual. The result is a virtual string diagram.
\begin{figure}[hbt]
\begin{center}
\includegraphics{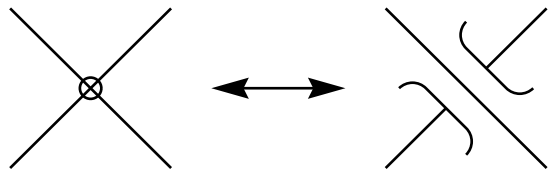}
\caption{Changing a surface with a virtual crossing (left) to a
 surface with a hollow handle and no crossing (right)}
\label{fig:handle}
\end{center}
\end{figure}
\par
The \emph{canonical surface} of a virtual string diagram $D$ is a
surface of minimal genus containing $D$. It is unique up to
homeomorphism of the surface. Such a surface can be easily
constructed. First construct a surface containing $D$ using the process
described above. Then cut $N(D)$ from the surface and construct a new
surface by glueing a disk to each boundary of $N(D)$. The result is the
canonical surface of $D$. This construction is described in several
places, for example \cite{Kadokami:Non-triviality} or
\cite{Turaev:2004}.
\par
If we allow diagrams with multiple oriented circles then the equivalence
classes of these diagrams under the flattened Reidemeister moves form a
generalization of virtual strings. We call these multi-component
virtual strings. This generalization is analogous to the generalization
of virtual knots to virtual links. Indeed, multi-component virtual
strings can be viewed as flattened virtual links. In general we only
consider single component virtual strings in this paper.
However, multi-component virtual strings appear in
Section~\ref{sec:geomcover}.
\section{Nanowords}\label{sec:nanowords}
In \cite{Turaev:Words}, Turaev defined the concept of a nanoword. We recall the
definition here.
\par
A \emph{word} is an ordered sequence of elements of a set. We call the
elements of the set \emph{letters}. A \emph{Gauss word} is a word where
each letter appears exactly twice or not at all.
For some set fixed set $\psi$, a \emph{nanoword over $\psi$} is defined
to be a Gauss word with a map from the set of letters appearing in the
Gauss word to $\psi$ \cite{Turaev:Words}.
\par
In \cite{Turaev:KnotsAndWords} Turaev showed that we can represent a
virtual string diagram as a nanoword over the set $\{a,b\}$. As we will
only be interested in this kind of nanoword in this paper, from now on
we will write \emph{nanoword} to mean a nanoword over $\{a,b\}$. We now
explain how to associate such a nanoword to a virtual string diagram.
\par
Given a virtual string diagram, we label the real crossings and
introduce a base point on the curve at some point other than a
crossing. Starting at the base point, we follow the curve according to
its orientation and record the 
labels of the crossings as we pass through them. When we get back to the
base point we have passed through each crossing exactly twice and the
sequence of labels we have recorded is a Gauss word.
We assign a map from the letters in the Gauss word to the set
$\{a,b\}$ in the following way.
For a given letter we consider its corresponding crossing. If, during
our traversal of the curve, the second time we passed through the
crossing, we crossed the first arc from right to left, we map the letter
to $a$. If we crossed the first arc from left to right, we map the
letter to $b$. These two cases are shown in Figure~\ref{fig:crossing}.
\par
\begin{figure}[hbt]
\begin{center}
\includegraphics{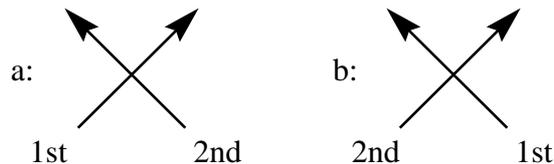}
\caption{The two types of real crossing}
\label{fig:crossing}
\end{center}
\end{figure}
\begin{figure}[hbt]
\begin{center}
\includegraphics{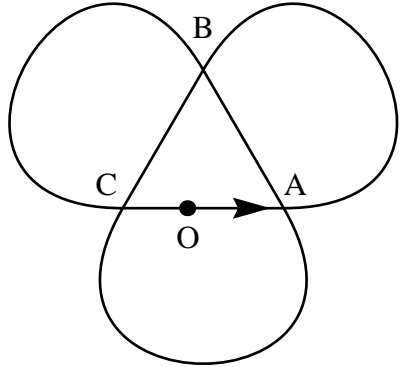}
\caption{The flattened trefoil with base point and crossing points labelled}
\label{fig:trefoil}
\end{center}
\end{figure}
As an example we calculate the nanoword corresponding to the diagram of
a flattened trefoil shown in Figure~\ref{fig:trefoil}. Starting at the
base point $O$, traversing the curve gives the Gauss word $ABCABC$. By
comparing each crossing 
to those in Figure~\ref{fig:crossing} we define the map from to
$\{A,B,C\}$ to $\{a,b\}$. In this case $A$ and $C$ map to $a$ and $B$
maps to $b$. 
\par
As $a$ and $b$ encode the crossing type we sometimes refer
to them as types. Thus in our example the type of $A$ is $a$ and the
type of $B$ is $b$.
Following Turaev \cite{Turaev:Words}, we use the notation $\xType{X}$ to
mean
the type of the letter $X$. So, in this example $\xType{A}$ is $a$,
$\xType{B}$ is $b$ and $\xType{C}$ is $a$.
\par
The map from the letters to the types can be
represented in a compact form by listing, in alphabetical order of the
letters, the images under the map. So in this example we can represent
the map by $aba$ which expresses the fact that $A$ maps to $a$, $B$ maps
to $b$ and $C$ maps to $a$. The nanoword from our example can then be
written simply as $\nanoword{ABCABC}{aba}$. We will use this format
often in this paper. 
\par
It is sometimes useful to draw an arrow diagram of a nanoword. Here we
write the letters of the Gauss word in order and then join each pair of
identical letters by an arrow. The direction of the arrow indicates the
crossing type. If the arrow goes from left to right, the crossing is a
type $a$ crossing. If the arrow goes from right to left, the crossing is
a type $b$ crossing. As an example, an arrow diagram of the nanoword
$\nanoword{ABCABC}{aba}$ is shown in Figure~\ref{fig:abcabc}.
\begin{figure}[hbt]
\begin{center}
\includegraphics{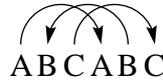}
\caption{Arrow diagram of nanoword $\nanoword{ABCABC}{aba}$}
\label{fig:abcabc}
\end{center}
\end{figure}
\par
The rank of a nanoword $\alpha$ is the number of different letters in
the Gauss word \cite{Turaev:Words}.
This is the number of real crossings in the virtual
string diagram which $\alpha$ represents. We write the rank of $\alpha$
as $\rank(\alpha)$.
In the previous example the rank of the nanoword is
$3$.
\par
An isomorphism of nanowords \cite{Turaev:Words} is a bijection $i$ from
the letters of a nanoword 
$\alpha_1$ to the letters of another nanoword $\alpha_2$ which satisfies the
following two requirements. Firstly, that $i$ maps the $n$th letter of
$\alpha_1$ to the $n$th letter of $\alpha_2$ for all $n$. Secondly, for
each letter $X$ in $\alpha_1$, $\xType{X}$ is equal to $\xType{i(X)}$.
Two nanowords $\alpha_1$ and $\alpha_2$ are isomorphic if there exists such an
isomorphism between them. Diagrammatically, if we relabel the real
crossings of a diagram, the nanowords associated with the diagram before
and after the relabelling will be isomorphic.
\par
Note that the nanoword representation is dependent on the base point that we
picked. To remove dependence on the base point Turaev defined a shift
move \cite{Turaev:KnotsAndWords}.
\par
A shift move takes the first letter in the nanoword and moves it to
the end of the nanoword. In the new nanoword, the moved letter is mapped to the
opposite type. The inverse of the shift move takes the last letter in the
nanoword and moves it to the beginning of the nanoword. Again, the type of the
moved letter is swapped. We can write the move like this:
\begin{equation*}
AxAy \longleftrightarrow xA^\prime yA^\prime
\end{equation*}
where $A$ and $A^\prime$ are arbitrary letters which map to opposite
types and $x$ and $y$ represent arbitrary sequences of letters such that
the words on either side of the move are Gauss words.
\par
Turaev defined homotopy moves for nanowords over any set
\cite{Turaev:Words}.
Here we
describe the moves in the specific case of nanowords over $\{a,b\}$.
When we describe moves on nanowords we use the following
conventions. Arbitrary individual letters are represented by upper case
letters. Lower case letters $x$, $y$, $z$ and $t$ are used to represent
sequences of letters. These sequences are arbitrary under the
constraint that each side of a move should be a Gauss word. 
\par
Homotopy move 1 (H1):
\begin{equation*}
xAAy \longleftrightarrow xy
\end{equation*}
where $\xType{A}$ is either $a$ or $b$.
\par
Homotopy move 2 (H2):
\begin{equation*}
xAByBAz \longleftrightarrow xyz
\end{equation*}
where $\xType{A}$ is not equal to $\xType{B}$.
\par
Homotopy move 3 (H3):
\begin{equation*}
xAByACzBCt \longleftrightarrow xBAyCAzCBt
\end{equation*}
where $\xType{A}$, $\xType{B}$ and $\xType{C}$ are all the same.
\par
Note that these moves correspond to flattened Reidemeister moves.
A detailed explanation of this correspondance is given in
\cite{Gibson:mthesis}.
\par
Turaev derived some simple moves from moves H1, H2
and H3. They appear in Lemmas~3.2.1 and 3.2.2 in \cite{Turaev:Words}.
We quote them here:
\begin{equation*}
\begin{array}{lll}
\textrm{H2a:}\quad & xAByABz \longleftrightarrow xyz & 
\textrm{where $\xType{A}\ne\xType{B}$}, \\
\textrm{H3a:}\quad & xAByCAzBCt \longleftrightarrow xBAyACzCBt &
\textrm{where $\xType{A}=\xType{C}\ne\xType{B}$}, \\
\textrm{H3b:}\quad & xAByCAzCBt \longleftrightarrow xBAyACzBCt & 
\textrm{where $\xType{A}=\xType{B}\ne\xType{C}$}, \\
\textrm{H3c:}\quad & xAByACzCBt \longleftrightarrow xBAyCAzBCt &
\textrm{where $\xType{B}=\xType{C}\ne\xType{A}$}.
\end{array}
\end{equation*}
\par
If there is a finite sequence of the homotopy moves H1, H2 and H3, shift
moves and isotopies which transforms one nanoword into another, then
those two nanowords are said to be \emph{homotopic}
\cite{Turaev:Words}.
This relation is an
equivalence relation. Turaev showed
that this idea of homotopy of nanoword 
representations of virtual strings and the usual homotopy of virtual
strings are equivalent \cite{Turaev:KnotsAndWords}. 
That is two nanowords $\alpha$ and $\beta$ are
homotopic if and only if the virtual strings $\Gamma_\alpha$ and
$\Gamma_\beta$ they represent are homotopic.
\par
The homotopy rank of a nanoword $\alpha$, written $\hr(\alpha)$, is the
minimal rank of all nanowords homotopic to $\alpha$ \cite{Turaev:Words}.
This is a homotopy invariant
of $\alpha$.
The homotopy rank of a virtual string $\Gamma$, $\hr(\Gamma)$, is
defined to be the homotopy rank of any nanoword $\alpha$ representing 
$\Gamma$. Clearly this is a homotopy invariant of
$\Gamma$. Geometrically, this invariant is the minimum number of real  
crossings that we need to be able to draw the virtual string as a
virtual string diagram.
\par
In \cite{Turaev:2004}, Turaev defined an invariant for virtual strings
called the $u$-polynomial. We recall the definition here.
\par
We fix a nanoword $\alpha$. Two distinct letters of $\alpha$, $A$ and
$B$, are said to be linked if $A$ and $B$ alternate in $\alpha$ and
unlinked otherwise.
Using this concept the linking number of $A$ and $B$, $\link{A}{B}$ is
defined as follows.
If $A$ and $B$ are unlinked, their linking
number is zero. If $A$ and $B$ are linked, their linking number is
either $1$ or $-1$ depending on the order that $A$ and $B$ appear in
$\alpha$ and on the types of $A$ and $B$:
\begin{equation*}
\link{A}{B}= 
\begin{cases}
0  & \text{$A$ and $B$ are unlinked}, \\
1  & \text{$A$ and $B$ are linked with pattern $\linkpattern{A}{B}{A}{B}$, $\xType{A}=\xType{B}$}, \\
-1 & \text{$A$ and $B$ are linked with pattern $\linkpattern{A}{B}{A}{B}$, $\xType{A}\neq\xType{B}$}, \\
1  & \text{$A$ and $B$ are linked with pattern $\linkpattern{B}{A}{B}{A}$, $\xType{A}\neq\xType{B}$}, \\
-1 & \text{$A$ and $B$ are linked with pattern $\linkpattern{B}{A}{B}{A}$, $\xType{A}=\xType{B}$}.
\end{cases}
\end{equation*}
For completeness, the linking number of any letter $X$ with itself is
defined to be $0$.
\par
Note that the linking number is well-defined under the shift move and that
\begin{equation}\label{eqn:link_skew}
\link{A}{B} = -\link{B}{A}
\end{equation}
for all letters $A$ and $B$ appearing in $\alpha$.
\par
For any letter $X$ in $\alpha$, $\n(X)$ is defined to be the sum of
the linking numbers of $X$ with each of the letters in $\alpha$: 
\begin{equation*}
\n(X) = \sum_{Y \in \alpha}\link{X}{Y}.
\end{equation*}
Note that as $\abs{\link{X}{Y}}$ is less than or equal to $1$ for all
$Y$ and $\link{X}{X}$ is $0$, $\abs{\n(X)}$ is less than $\rank(\alpha)$.
We also note that
\begin{equation}\label{eqn:n_zero_sum}
\sum_{X \in \alpha}\n(X) = \sum_{X \in \alpha}\sum_{Y \in
 \alpha}\link{X}{Y} = 0,
\end{equation}
where the left hand equality is true by definition and the right hand
equality is given by \eqref{eqn:link_skew}.
\par
We remark that $\n(X)$ can be interpreted geometrically by considering a
diagram corresponding to $\alpha$. Note that we can orient $X$ so that
it looks like the crossing on the left of
Figure~\ref{fig:virtualcrossings}. By removing a small neighbourhood of 
the crossing $X$, the curve is split into two segments. We label the
segment starting at the right hand outgoing arc $p$ and the arc
starting at the left hand outgoing arc $q$.
Then $\n(X)$ is the number of times $q$ crosses $p$ from right to
left minus the number of times $q$ crosses $p$ from left to right.
\par
For a positive integer $k$ we define $u_k(\alpha)$ as follows:
\begin{equation*}
u_k(\alpha) = \sharp \lbrace X\in \alpha | \n(X)=k \rbrace - \sharp \lbrace X\in \alpha | \n(X)=-k \rbrace .
\end{equation*}
Here $\sharp$ indicates the number of elements in the set.
Turaev showed that $u_k(\alpha)$ is invariant under homotopy
\cite{Turaev:2004}.  
He combined these invariants into a polynomial called the $u$-polynomial
of $\alpha$ which is defined as  
\begin{equation*}
u_{\alpha}(t) = \sum_{k \geq 1}u_k(\alpha)t^k.
\end{equation*}
\par
As each $u_k(\alpha)$ is invariant under homotopy, it is clear that the
$u$-polynomial is also a homotopy invariant. 
The $u$-polynomial of a virtual string $\Gamma$ is defined to be the
$u$-polynomial of some nanoword $\alpha$ representing $\Gamma$.
\par
We note that for the trivial virtual string
$u_{\trivial}(t)$ is $0$.
We also mention that Theorem~3.4.1 of \cite{Turaev:2004} states that an
integral polynomial 
$u(t)$ can be realized as the $u$-polynomial of a virtual string if and
only if $u(0)=u^{\prime}(1)=0$.
\par
To illustrate the use of the $u$-polynomial we reproduce, in nanoword
terminology, a calculation of the $u$-polynomial of a 2-parameter family
of virtual strings which was orginally made by Turaev in Section~3.3,
Exercise~1 of \cite{Turaev:2004}.
We will use these virtual strings later in this paper.
\begin{ex}\label{ex:alpha_pq}
Consider the virtual string $\Gamma_{p,q}$ for positive integers
$p$ and $q$ represented by the nanoword $\alpha_{p,q}$ given by
\begin{equation*}
X_1X_2\dotso X_pY_1Y_2\dotso Y_qX_p\dotso X_2X_1Y_q\dotso Y_2Y_1
\end{equation*}
where $\xType{X_i}=a$ for all $i$ and $\xType{Y_j}=a$ for all $j$. Then
$\n(X_i)$ is equal to $q$ for all $i$ and $\n(Y_j)$ is equal to $-p$ for
all $j$. So the $u$-polynomial for $\alpha_{p,q}$, and thus
 $\Gamma_{p,q}$, is $pt^q-qt^p$.
When $p$ and $q$ are not equal, the $u$-polynomial is non-zero and so
 $\Gamma_{p,q}$ is non-trivial. In this case, $\Gamma_{p,q}$ is homotopic
 to $\Gamma_{r,s}$ only if $p$ equals $r$ and $q$ equals $s$.
\par
By use of another invariant, the based 
 matrix of a virtual string (which we review in
 Section~\ref{sec:based_matrices}), Turaev showed that the virtual
 strings $\Gamma_{p,p}$ are non-trivial and mutually distinct under
 homotopy for $p$ greater than or equal to $2$ (Section~6.4~(1) of
 \cite{Turaev:2004}).
Using homotopy move H2a on $\alpha_{1,1}$, it is easy to show that
 $\Gamma_{1,1}$ is homotopically trivial (this is also mentioned in
 Section~3.3, Exercise~1 of \cite{Turaev:2004}).
\end{ex}
\par
We now give a definition of the composition of two nanowords
$\alpha$ and $\beta$ which is written $\alpha\beta$.
This operation was originally defined by Turaev in \cite{Turaev:Words}
where he called it multiplication.
\par
If the Gauss words of $\alpha$ and $\beta$ have no letters in common,
the Gauss word of $\alpha\beta$ is the concatenation of the Gauss words
of $\alpha$ and $\beta$. The map from the letters of $\alpha\beta$ to
$\{a,b\}$ is defined by using the map belonging to $\alpha$ for letters
coming from $\alpha$ and the map belonging to $\beta$ for letters
coming from $\beta$.
\par
If the Gauss words of $\alpha$ and $\beta$ do have letters in common, we
can use an isomorphism to transform $\beta$ to a nanoword $\beta^\prime$
that does not have letters in common with $\alpha$. Then the composition
of $\alpha$ and $\beta$ is defined to be the composition of $\alpha$ and
$\beta^\prime$. The following example demonstrates this operation.
\begin{ex}
Let $\alpha$ be the nanoword $\nanoword{ABACDBDC}{abbb}$ and $\beta$ be the nanoword
 $\nanoword{ABACBC}{abb}$. As letters appearing in $\beta$ also appear
 in $\alpha$, we use an isomorphism to get a new nanoword $\nanoword{EFEGFG}{abb}$
 which is isomorphic to $\beta$. We call this new nanoword
 $\beta^\prime$. The composition of $\alpha$ and $\beta$ is then
 the composition $\alpha$ and $\beta^\prime$. We get the nanoword 
$\nanoword{ABACDBDCEFEGFG}{abbbabb}$. 
\end{ex}
\par
In \cite{Turaev:2004} Turaev noted that
\begin{equation}\label{eqn:u-poly_concatenation}
u_{\alpha\beta}(t) = u_{\alpha}(t) + u_{\beta}(t).
\end{equation}
This is because in $\alpha\beta$, letters in $\beta$ do not link any
letters in $\alpha$. Thus for any letter $X$ in $\alpha$, $\n(X)$ in
$\alpha\beta$ is equal to $\n(X)$ in $\alpha$. Similarly, for any letter
$X$ in $\beta$, $\n(X)$ in $\alpha\beta$ is equal to $\n(X)$ in $\beta$.
\par
Composition of nanowords is not well-defined up to homotopy. For example,
we take $\gamma$ to be the trivial nanoword and $\delta$ to be the nanoword
$\nanoword{ABAB}{aa}$. Then $\delta$ is homotopic to $\gamma$ by the
move H2a. On the other hand, the 
composition $\gamma\gamma$ is clearly trivial, yet it can be shown using
primitive based matrices (which we recall later) that $\delta\delta$ is
non-trivial. In fact, $\delta\delta$ represents the virtual string which
underlies Kishino's knot shown in Figure~\ref{fig:kishino}.
\section{Head and tail matrices of a nanoword}\label{sec:th_matrices}
Given a nanoword $\alpha$, we define $\mathcal{A}$ to be the set of letters in
$\alpha$. Then we define two maps,
$t:\mathcal{A}\times\mathcal{A}\rightarrow\{0,1\}$ and
$h:\mathcal{A}\times\mathcal{A}\rightarrow\{0,1\}$.
\par
We set $t(X,X)=h(X,X)=0$ for all $X$ in $\mathcal{A}$. To define $t(X,Y)$ and
$h(X,Y)$, where $X$ and $Y$ are different elements in $\mathcal{A}$, we use the
arrow diagram of the nanoword $\alpha$.
Starting at the letter $X$
at the tail of the arrow joining the two occurences of $X$, we move right along the
nanoword, noting the letters that we move past.
If we reach the end of the nanoword, we return to the start of the
nanoword and continue moving rightwards noting letters.
We keep moving until the letter $X$ at the head of the arrow is found.
If we noted the letter $Y$ at the tail of the arrow joining the two
occurences of $Y$,
$t(X,Y)$ is $1$, otherwise $t(X,Y)$ is $0$. Similarly if we noted the
$Y$ at the head of the arrow, $h(X,Y)$ is $1$, otherwise
$h(X,Y)$ is $0$.
\par
By assigning an order to the letters in $\mathcal{A}$, we can represent the
maps as matrices. We call the matrix representing $t$ the tail matrix
and write it $T(\alpha)$. Similarly the matrix representing $h$ is
called the head matrix and is written $H(\alpha)$. Note that by picking
a different order of the letters in $\mathcal{A}$ we may well get a different
pair of matrices.
\par
\begin{figure}[hbt]
\begin{center}
\includegraphics{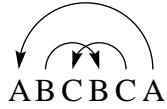}
\caption{Arrow diagram of nanoword $\nanoword{ABCBCA}{bab}$}
\label{fig:abcbca}
\end{center}
\end{figure}
\begin{ex}
Consider the nanoword $\nanoword{ABCBCA}{bab}$, for which the arrow
 diagram is given in Figure~\ref{fig:abcbca}. Ordering the elements
 alphabetically we get this tail matrix:
\begin{equation*}
\begin{pmatrix}
0 & 0 & 0 \\
0 & 0 & 0 \\
1 & 1 & 0
\end{pmatrix}
\end{equation*}
and this head matrix:
\begin{equation*}
\begin{pmatrix}
0 & 0 & 0 \\
0 & 0 & 1 \\
1 & 0 & 0
\end{pmatrix}.
\end{equation*}
\end{ex}
\par
We note that we can calculate the linking number between $X$ and $Y$ in $\alpha$
from the head and tail matrices by
\begin{equation}\label{eqn:linking}
l(X,Y) = t(X,Y) - h(X,Y).
\end{equation}
\par
If $\rank(\alpha)$ is $n$ then $T(\alpha)$ and $H(\alpha)$ are in the set of
$n \times n$ matrices for which all the diagonal entries are $0$ and all
the other entries are either $0$ or $1$.
For a given $n$ we can take any two matrices $T$ and $H$ in this set and
ask whether a nanoword $\alpha$ exists for which $T(\alpha)$ is $T$ and
$H(\alpha)$ is $H$.
\par
A simple restriction comes from the fact that the linking number is
skew-symmetric. Using \eqref{eqn:linking}, any matrices $T$ and
$H$ corresponding to a nanoword $\alpha$ must satisfy
\begin{equation}\label{eqn:restriction}
T-H = -\transpose{(T-H)},
\end{equation}
where $t$ means the matrix transpose operation.
Unfortunately this is not a sufficient condition.
For example, consider the pair of matrices $T$ and $H$ given by
\begin{equation*}
\begin{pmatrix}
0 & 0 & 0 \\
1 & 0 & 0 \\
0 & 1 & 0
\end{pmatrix}
\text {and}
\begin{pmatrix}
0 & 1 & 0 \\
0 & 0 & 0 \\
0 & 1 & 0
\end{pmatrix}.
\end{equation*}
These matrices satisfy \eqref{eqn:restriction} but a simple
combinatorial check shows that there is no 3-letter nanoword $\alpha$ to which
they correspond.
\section{Based matrices of virtual strings}\label{sec:based_matrices}
In \cite{Turaev:2004}, Turaev introduced the concept of a based matrix
and described how to associate a based matrix with a virtual string. We
briefly recall the definitions here.
\par
Let $G$ be a finite set with a special element $s$. For some abelian
group $H$ let $b$ be a map from $G\times G$ to $H$ satisfying
$b(g,h) = -b(h,g)$
for all $g$ and $h$ in $G$ (in other words, $b$ is skew-symmetric). Then
the triple $(G,s,b)$ is a based matrix over $H$.
For the rest of this paper we will take $H$ to be $\Z$.
\par
We can associate a based matrix with a nanoword $\alpha$ as
follows. First we take $G$ to be the set of letters in $\alpha$ union
the special element $s$.
\par
We then consider a diagram corresponding to $\alpha$ embedded in some
surface $S$.
To each element $g$ of $G$ we associate a closed loop in the surface $S$
which we label $g_c$.
For the special element $s$ we define $s_c$ to be the whole curve.
Any other element in $G$ corresponds to a crossing in the diagram. For
any crossing $X$ we define a closed loop $X_c$ as follows. First we
orient the crossing so that it looks like the crossing on the left of
Figure~\ref{fig:virtualcrossings}. Then starting from $X$ we leave the
crossing on the outgoing right hand arc and follow the curve until we
get back to $X$ for the first time. We define $X_c$ to be a loop
parallel to the loop we have just traced. Figure~\ref{fig:loop} shows an
example.
\par
We define $b(g,h)$ to be the homological intersection number of the loop
$g_c$ with the loop $h_c$. This is just the number of times that
$h_c$ crosses $g_c$ from right to left minus the number of times that
$h_c$ crosses $g_c$ from left to right.
\begin{figure}[hbt]
\begin{center}
\includegraphics{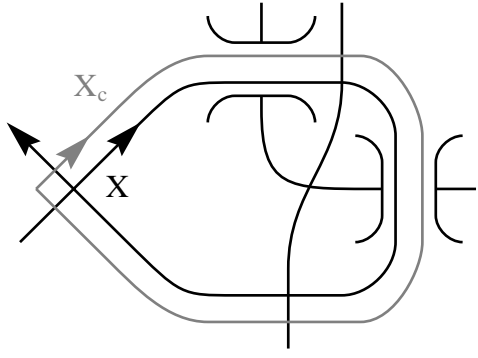}
\caption{Example of defining a loop at a crossing}
\label{fig:loop}
\end{center}
\end{figure}
\par
We then define the based matrix associated with $\alpha$ to be the
triple $(G,s,b)$. We write this $M(\alpha)$.
\par
By assigning an order to the elements in $G$ it is possible to write $b$
as a matrix. By convention, the special element $s$ always comes first
in such an ordering. Thus the first row of the matrix has elements of
the form $b(s,x)$ and the first column has elements of the form $b(x,s)$
for each $x$ in $G$. The resulting matrix is skew-symmetric.
\par
Let $(G_1,s_1,b_1)$ and $(G_2,s_2,b_2)$ be two based matrices.
If there is a bijection $f$ from $G_1$ to $G_2$ such that $f(s_1)$ equals
$s_2$ and for all $g$ and $h$ in $G$, $b_2(f(x),f(y))$ is equal to
$b_1(g,h)$, then the two based matrices are said to be isomorphic
\cite{Turaev:2004}.
Informally, two based matrices are isomorphic if we
can pick orderings of the elements of the two sets $G_1$ and $G_2$ such
that $s_1$ and $s_2$ are first in their respective sets and the
corresponding skew-symmetric matrices are the same.
\par
We can calculate $b$ directly from $\alpha$.
It is clear that $b(s,s)$ is $0$.
In \cite{Turaev:2004}, Turaev showed that $b(g,s)$ is equal to $\n(g)$
for all $g$ in $G-\lbrace s \rbrace$.
In other words, $\n(X)$ is the homological intersection number of $X_c$
and the whole curve in the diagram.
For $g$ and $h$ in $G-\lbrace s\rbrace$ we can calculate $b(g,h)$ by
\begin{equation}\label{eqn:matrix_elements}
b(g,h) = t(g,h) - h(g,h) + 
\sum_{k \in G-\lbrace s\rbrace}\bigl( t(g,k)h(h,k) - h(g,k)t(h,k) \bigr)
\end{equation}
where $t$ and $h$ were defined in Section~\ref{sec:th_matrices} (this
equation is derived from Lemma~4.2.1 in \cite{Turaev:2004}).
We can thus write $b$ as a matrix in the form
\begin{equation*}
\begin{pmatrix}
0& -\transpose{\cvector{n}} \\
\cvector{n}& B
\end{pmatrix}
\end{equation*}
where $B$ is the submatrix of $b$ corresponding to the elements in
$G-\lbrace s \rbrace$
and $\cvector{n}$ is the column vector consisting of $\n(g)$ for each
$g$ (where the order of elements in the vector matches the order of the
elements in the matrix $B$).
By \eqref{eqn:matrix_elements} we can calculate $B$ directly from
$T(\alpha)$ and $H(\alpha)$ using this formula 
\begin{equation*}
B = T - H + T \transpose{H} - H \transpose{T}
\end{equation*}
where we have written $T$ for $T(\alpha)$ and $H$ for $H(\alpha)$.
\par
Turaev made the following definitions \cite{Turaev:2004}.
An annihilating element of a
based matrix $(G,s,b)$ is an element $g$ in $G-\lbrace s\rbrace$ for
which $b(g,h)=0$ for all $h$ in $G$.
A core element is an element $g$ in
$G-\lbrace s\rbrace$ for which $b(g,h) = b(s,h)$ for all $h$ in $G$.
Two elements $g$ and $h$ in $G-\lbrace s\rbrace$ are complementary
elements if $b(g,k) + b(h,k) = b(s,k)$ for all $k$ in $G$.
A based matrix is called primitive if it has no annihilating elements,
core elements or complementary elements.
\par
Turaev defined three reducing operations on based matrices
\cite{Turaev:2004}.
The first 
removes an annihilating element, the second removes a core element and
the third removes a complementary pair. Each move transforms a based
matrix $(G,s,b)$ to $(G^\prime,s,b^\prime)$, where $G^\prime$ is
derived from $G$
by removing the element(s) involved in the operation and $b^\prime$ is
$b$ restricted to $G^\prime$. 
\par
A primitive based matrix does
not admit any of these reducing operations. Clearly we can apply a sequence
of these operations to a non-primitive based matrix until we derive a
primitive based matrix. Turaev showed that up to
isomorphism, the resulting primitive based matrix is the same,
irrespective of which elements we remove and the order in which we
remove them \cite{Turaev:2004}. 
\par
We can apply these reducing operations to the based matrix $M(\alpha)$
associated with the nanoword $\alpha$ to get a primitive based matrix. We
call it $P(\alpha)$. Turaev showed that up to isomorphism $P(\alpha)$ is
a homotopy invariant of $\alpha$ \cite{Turaev:2004}. Thus we can define
the primitive based 
matrix $P(\Gamma)$ of a virtual string $\Gamma$ to be $P(\alpha)$ for
any nanoword representing $\Gamma$. In particular this means that we can use
properties of $P(\Gamma)$ that are invariant under isomorphism of based
matrices as invariants of virtual strings.
Turaev gave some suggestions for such invariants in \cite{Turaev:2004}.
In \cite{Gibson:tabulating-vs} we define a canonical representation of a
based matrix which can be used as a complete invariant of based
matrices up to isomorphism.
\par
A simple invariant of primitive based matrices that we will use in this
paper is the number of elements in the set in $P(\Gamma)$. As the
special element $s$ cannot be removed by any of the moves defined on
based matrices, a based matrix always has at least one element. Thus
$\rho(\Gamma)$ is defined to be the number of elements in $P(\Gamma)$
minus one. This invariant was defined in \cite{Turaev:2004}.
\par
Turaev also noted that the $u$-polynomial of a nanoword
$\alpha$ can be calculated from $M(\alpha)$ or $P(\alpha)$ (because, as
we have mentioned, $b(g,s)$ is equal to $\n(s)$ for all $g$ in
$G-\lbrace s \rbrace$) \cite{Turaev:2004}.
This means that for nanowords $\alpha$ and $\beta$, if $P(\alpha)$ is
isomorphic to $P(\beta)$ then $u_{\alpha}(t)$ is equal to
$u_{\beta}(t)$. However the converse is not necessarily true and the
primitive based matrix invariant is stronger than the $u$-polynomial
\cite{Turaev:2004}.
\section{Coverings}\label{sec:coverings}
Turaev defined an operation on virtual strings called a covering
\cite{Turaev:2004}.
He also defined coverings for nanowords in \cite{Turaev:Words}.
Given a nanoword $\alpha$ and a non-negative integer $r$, the
\textcover{r} of $\alpha$ is the nanoword derived from $\alpha$ by
removing any letter $X$ in $\alpha$ for which $\n(X)$ is not divisible
by $r$.
The \textcover{r} of $\alpha$ is written $\cover{\alpha}{r}$.
\par
For a virtual string $\Gamma$ we pick a nanoword $\alpha$ that
represents it and then define $\cover{\Gamma}{r}$ to be the virtual
string realized by $\cover{\alpha}{r}$.
In \cite{Turaev:2004}, Turaev showed that $\cover{\Gamma}{r}$ is not
dependent on the nanoword $\alpha$ that we picked. That is, if $\alpha_1$
and $\alpha_2$ are homotopic nanowords representing the virtual string $\Gamma$,
$\cover{\alpha_1}{r}$ and $\cover{\alpha_2}{r}$ are also homotopic. This
means that $\cover{\Gamma}{r}$ is
an invariant of the virtual string $\Gamma$ and invariants of coverings
of virtual strings can
be used to distinguish the virtual strings themselves. We call $\cover{\Gamma}{r}$ the
\textcover{r} of $\Gamma$.
\par
Note that we have
extended Turaev's definition to include the case where $r$ is
$0$. Turaev's invariance result is true in this case too.
\par
A covering is thus a map from the set of virtual strings to itself. It
is interesting to ask such questions as whether the map is injective or
surjective and whether there exist any fixed points. We can also ask
what happens when we repeatedly apply the map to its own output. Are
there any periodic points?
\par
Note that for any virtual string $\Gamma$, the \textcover{1} of $\Gamma$
is always $\Gamma$ and so these questions are easily answered when $r$
is $1$. We also note that for all $r$, $\cover{\trivial}{r}$ is
$\trivial$.
\par
\begin{ex}\label{ex:simple_covering}
Consider the virtual string $\Gamma$ represented by the nanoword $\alpha$ given by
$\nanoword{ABCACB}{aaa}$. We have $\n(A)=2$ and $\n(B)=\n(C)=-1$. 
So we have $u_\Gamma(t)=t^2-2t$ and $\Gamma$ is not trivial.
When $r$ is 2, $\cover{\alpha}{2}$ is $\nanoword{AA}{a}$ which is homotopically
trivial by the first homotopy move and so $\cover{\Gamma}{2}$ is $\trivial$.
When $r$ is $0$ or greater than $2$, $\cover{\alpha}{r}$ is $\trivial$ and so
$\cover{\Gamma}{r}$ is also $\trivial$. Thus for all $r$ not equal to $1$,
$\cover{\Gamma}{r}$ is trivial and equal to $\trivial$.
\end{ex}
\par
This example shows that the covering map for $r$ is not injective unless
$r$ is equal to $1$.
\par
\begin{thm}\label{thm:surjectivity}
For any non-negative integer $r$, the covering map corresponding to $r$ is 
surjective. When $r$ is not $1$, for any given virtual string $\Gamma$ there
are an infinite number of virtual strings which map to $\Gamma$ under the covering map.
\end{thm}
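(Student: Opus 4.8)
The plan is to handle $r = 1$ at once, prove surjectivity for $r \neq 1$ by an explicit construction, and then deduce the ``infinitely many preimages'' assertion from surjectivity by a composition trick. For $r = 1$ there is nothing to do: every integer is divisible by $1$, so no letter is ever deleted, $\cover{\Gamma}{1} = \Gamma$ for every $\Gamma$, and the covering map is the identity. So from now on fix $r$ with $r \neq 1$ (hence $r = 0$ or $r \geq 2$), fix a virtual string $\Gamma$, and fix a nanoword $\gamma$ representing it with letters $A_1, \dots, A_n$ and values $n_i := \n(A_i)$.

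To prove surjectivity I would construct a nanoword $\beta$ with $\cover{\beta}{r}$ representing $\Gamma$. The idea is to enlarge $\gamma$ to $\beta$ by threading in auxiliary crossings so that: (i) every auxiliary crossing has $\n$-value $\pm 1$ in $\beta$ --- and since $\pm 1$ is not divisible by $r$ when $r \geq 2$, and (reading ``divisible by $0$'' as ``equal to $0$'') not divisible by $0$ either, the $r$-covering deletes exactly the auxiliary crossings; (ii) every original letter $A_i$ has $\n$-value divisible by $r$ in $\beta$, so none of them is deleted; and (iii) the auxiliary crossings create no new linkings among $A_1, \dots, A_n$, so that the nanoword obtained from $\beta$ by deleting all auxiliary crossings is exactly $\gamma$. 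Granting (i)--(iii), $\cover{\beta}{r} = \gamma$, hence $\cover{\Gamma_\beta}{r} = \Gamma$ and the covering map is surjective. Condition (iii) may be relaxed: it is enough for the surviving nanoword to be homotopic to $\gamma$ (for instance $\gamma$ together with some removable kinks), which gives more room in the construction.

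The technical heart, and the step I expect to be the main obstacle, is realising (i) and (ii) simultaneously. A natural tool is to insert controlled parallel push-offs of the loops $A_{i,c}$ along the curve: computing linking numbers shows that inserting such push-offs changes the vector $(n_1, \dots, n_n)$ by $(L + I)\vec{t}$, where $L = (\link{A_i}{A_j})$ is the skew-symmetric, $\{-1, 0, 1\}$-valued linking matrix of $\gamma$ and $\vec{t} \in \Z^n$ records the signed number of push-offs of each $A_i$, while leaving the linkings among the $A_i$ untouched (giving (iii)). Condition (ii) then asks for $\vec{t} \in \Z^n$ with $(L + I)\vec{t} \equiv -(n_1, \dots, n_n) \pmod r$, and condition (i) imposes constraints on the push-offs' own $\n$-values. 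Because $\det(I + L)$ need not be invertible modulo $r$, push-offs alone may not give enough freedom; to get around this I would first replace $\gamma$ by a homotopic nanoword with a more convenient linking matrix, and enlarge the family of auxiliary crossings by also inserting ``chordal'' crossings that link prescribed arcs of the curve, enough to realise the required corrections --- checking that some such configuration always works is the delicate point.

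Finally, for the ``infinitely many preimages'' assertion (still with $r \neq 1$), fix a preimage $\beta_0$ of $\Gamma$ produced as above, and for each integer $q \geq 2$ with $q \not\equiv 0 \pmod r$ (there are infinitely many such $q$ when $r \geq 2$; when $r = 0$ every $q \geq 2$ qualifies) consider the composite nanoword $\beta_0 \alpha_{1,q}$, where $\alpha_{1,q}$ is the nanoword of Example~\ref{ex:alpha_pq}. In a composite, letters of one factor link no letters of the other, so the letters of $\alpha_{1,q}$ keep their values $q$ (for $X_1$) and $-1$ (for each $Y_j$); none of these is divisible by $r$, so $\cover{(\beta_0 \alpha_{1,q})}{r} = \cover{\beta_0}{r}$ and $\beta_0 \alpha_{1,q}$ is again a preimage of $\Gamma$. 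Moreover, by \eqref{eqn:u-poly_concatenation} and Example~\ref{ex:alpha_pq}, $u_{\beta_0 \alpha_{1,q}}(t) = u_{\beta_0}(t) + t^q - qt$, and these polynomials are pairwise distinct as $q$ varies over our infinite set; as the $u$-polynomial is a homotopy invariant, the virtual strings $\Gamma_{\beta_0 \alpha_{1,q}}$ are pairwise distinct, so infinitely many virtual strings map to $\Gamma$.
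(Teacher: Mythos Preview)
Your treatment of the case $r=1$ and of the ``infinitely many preimages'' assertion is essentially the same as the paper's (the paper also composes with $\alpha_{1,p}$ and uses additivity of the $u$-polynomial; your extra restriction $q\not\equiv 0\pmod r$ is harmless but unnecessary, since when $r\mid q$ the cover of $\alpha_{1,q}$ is $X_1X_1$, which is trivial by H1).

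The surjectivity argument, however, has a genuine gap. You set up the problem as solving $(L+I)\vec t\equiv -(n_1,\dots,n_n)\pmod r$ and then, when this system need not be solvable, you propose to ``enlarge the family of auxiliary crossings'' and to ``first replace $\gamma$ by a homotopic nanoword with a more convenient linking matrix'', but you do not actually carry out either step; you yourself flag that ``checking that some such configuration always works is the delicate point''. As written, this is not a proof.

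The paper avoids the linear-algebra obstruction entirely with a much simpler, purely local trick that you have overlooked. Rather than aiming for $\n(A_i)\equiv 0\pmod r$, it arranges $\n(A_i)=0$ for \emph{every} original letter, which is divisible by all $r$ (including $r=0$) at once. Concretely, for each letter $X$ in $\gamma$ with $\n(X)\neq 0$, set $k=\lvert\n(X)\rvert$ and perform the replacement
\[
xXyXz \longrightarrow xXy\,A_1A_2\dotsm A_k\,X\,A_k\dotsm A_2A_1\,z,
\]
where all $A_i$ have type equal to $\xType{X}$ if $\n(X)<0$ and opposite to $\xType{X}$ if $\n(X)>0$. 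Because the $A_i$'s are nested around the second occurrence of $X$, one checks immediately that each $A_i$ links $X$ and \emph{no other letter} (original or auxiliary). Hence in the resulting nanoword $\beta$: $\n(A_i)=\pm1$; $\n(X)$ changes by $\pm k$ to become $0$; and $\n(Y)$ is unchanged for every other original letter $Y$. After performing this for every $X$, all original letters have $\n=0$ and all auxiliary letters have $\n=\pm1$, so for any $r\neq1$ the $r$-covering deletes exactly the auxiliary letters and returns $\gamma$. This decouples the letters completely and sidesteps the solvability issue that blocked your approach.
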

\begin{proof}
We have already observed that when $r$ is $1$ the first claim is
 true. We consider the case when $r$ is not $1$. We show that given a
 virtual string $\Gamma$ represented by a nanoword $\alpha$, we can construct a new
 nanoword $\beta$ for which $\cover{\beta}{r}$ is $\alpha$. Then $\beta$
 represents a virtual 
 string which maps to $\Gamma$ under the covering map corresponding to $r$.
\par
To construct $\beta$ we first make a copy of $\alpha$. Then
for each letter $X$ in $\alpha$, we consider the value of $\n(X)$.
\par
If $\n(X)$ is zero then we make no changes relating to $X$.
\par
If $\n(X)$
 is non-zero we add letters to $\beta$ in the following way
\begin{equation*}
 xXyXz \longrightarrow xXyA_1A_2\dotso A_kXA_k\dotso A_2A_1z
\end{equation*}
where $k$ equals $\abs{\n(X)}$ and, for all $i$, $\xType{A_i}$ is set to
 $\xType{X}$ if $\n(X)$ is negative and the opposite type to $\xType{X}$
 if $\n(X)$ is positive.  Note that in the new nanoword $\n(A_i)$ is
 $\pm 1$ for all $i$
 and $\n(X)$ is $0$. Also note that for any other letter $Y$ in the old
 nanoword, $\n(Y)$ is unchanged as we go from the old nanoword to the new nanoword.
\par
Once we have considered every letter in $\alpha$ and made the
 appropriate additions, we call the resultant
nanoword $\beta$. The nanoword consists of letters $X_i$ originally in $\alpha$,
and letters $A_j$ which we added. By construction, in $\beta$, $\n(X_i)$
is $0$ for all $i$ and $\n(A_j)$ is $\pm 1$ for all $j$. Thus when we
take the \textcover{r} ($r$ not $1$) of $\beta$, we remove all the
 letters $A_j$
and keep all the letters $X_i$. Since the order and the types of the
 letters $X_i$ were
not changed during our construction of $\beta$, the result is
 $\alpha$. Thus $\cover{\beta}{r}$ is $\alpha$ and the first claim of
 the theorem is proved.
\par
To prove the second claim we use the fact that for a letter $X$ in a
 nanoword $\gamma$ or $\delta$, $\n(X)$ remains unchanged in the composition
 $\gamma\delta$. This implies that
\begin{equation*}
\cover{(\gamma\delta)}{r} = \cover{\gamma}{r}\cover{\delta}{r}.
\end{equation*}
\par
It is simple to
calculate that the $u$-polynomial for the virtual string $\beta$
 constructed above is $0$.
\par
Consider the nanowords $\alpha_{p,1}$ and $\alpha_{1,p}$ in Example
 \ref{ex:alpha_pq}. These have 
$u$-polynomials $pt-t^p$ and $t^p-pt$ respectively. If we take the
 \textcover{r} of either nanoword ($r$ not $1$), we either get the
 trivial nanoword (if $r$ does not divide $p$) or a nanoword
 isomorphic to $\nanoword{AA}{a}$ (if $r$ does divide $p$). In the
 latter case, the letter $A$ can then be removed by the move H1. The
 result in either case is the trivial nanoword.
\par
Therefore, if we take the composition of $\beta$ with such
nanowords we can construct new nanowords such that the \textcover{r} is
 still $\alpha$. However by \eqref{eqn:u-poly_concatenation}
the $u$-polynomial will be non-zero. In particular we can
construct an infinite family of nanowords $\beta\alpha_{1,p}$ for which
$\cover{(\beta\alpha_{1,p})}{r}$ is $\alpha$ and, by
 \eqref{eqn:u-poly_concatenation}, the $u$-polynomial is
$t^p-pt$ (for $p$ greater than $1$). Thus the family of virtual strings
 that they represent are all mutually homotopically distinct.
\end{proof}
Note that in fact, by concatenating $\beta$ with multiple copies of
$\alpha_{p,1}$ and $\alpha_{1,p}$, possibly with different values of
$p$, it is possible to construct a nanoword $\gamma$ with any $u$-polynomial 
satisfying $u(0)=u^{\prime}(1)=0$
such that $\cover{\gamma}{r}$ is $\alpha$. Thus we have the
corollary:
\begin{cor}
For any virtual string $\Gamma$, any $u$-polynomial $u(t)$ satisfying
 $u(0)=u^{\prime}(1)=0$ and any
 non-negative integer $r$, $r$ not $1$, there is a virtual string $\Lambda$ such that
 $u_\Lambda(t)=u(t)$ and $\cover{\Lambda}{r}$ is $\Gamma$. 
\end{cor}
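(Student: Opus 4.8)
The plan is to make rigorous the remark following Theorem~\ref{thm:surjectivity}. Write $u(t)=\sum_{k\ge1}c_k t^k$ with each $c_k\in\Z$; the hypothesis $u(0)=0$ holds automatically since $u$ has no constant term, and $u^\prime(1)=0$ says $\sum_{k\ge1}kc_k=0$, equivalently $c_1=-\sum_{p\ge2}pc_p$. The first step is the elementary observation that any integral polynomial $u$ with $u(0)=u^\prime(1)=0$ can be written as $u(t)=\sum_{p\ge2}d_p(t^p-pt)$ for uniquely determined integers $d_p$: comparing the coefficient of $t^p$ for each $p\ge2$ forces $d_p=c_p$, and with that choice the coefficient of $t$ on the right-hand side is $-\sum_{p\ge2}pc_p$, which equals $c_1$ by the identity just noted. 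Hence $u(t)=\sum_{p\ge2}c_p(t^p-pt)$.

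Recall from Example~\ref{ex:alpha_pq} that $\alpha_{1,p}$ has $u$-polynomial $t^p-pt$ and $\alpha_{p,1}$ has $u$-polynomial $pt-t^p$, and from the proof of Theorem~\ref{thm:surjectivity} that, when $r$ is not $1$, both $\cover{(\alpha_{1,p})}{r}$ and $\cover{(\alpha_{p,1})}{r}$ are homotopically trivial (each is either the trivial nanoword or a nanoword isomorphic to $\nanoword{AA}{a}$, which collapses by H1). Pick a nanoword $\alpha$ representing $\Gamma$ and let $\beta$ be the nanoword constructed from $\alpha$ in the proof of Theorem~\ref{thm:surjectivity}, so that $u_\beta(t)=0$ and $\cover{\beta}{r}=\alpha$. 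Form $\gamma$ by composing $\beta$ with $\abs{c_p}$ copies of $\alpha_{1,p}$ for each $p\ge2$ with $c_p>0$, and with $\abs{c_p}$ copies of $\alpha_{p,1}$ for each $p\ge2$ with $c_p<0$, using isomorphisms at each stage to keep all letter sets disjoint. By \eqref{eqn:u-poly_concatenation} and the previous paragraph, $u_\gamma(t)=u_\beta(t)+\sum_{p\ge2}c_p(t^p-pt)=u(t)$.

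It remains to identify $\cover{\gamma}{r}$. By the identity $\cover{(\delta\epsilon)}{r}=\cover{\delta}{r}\cover{\epsilon}{r}$ noted in the proof of Theorem~\ref{thm:surjectivity}, $\cover{\gamma}{r}$ is $\cover{\beta}{r}=\alpha$ composed with the $r$-coverings of the appended factors, each of which is homotopically trivial. Since each appended factor occurs as a contiguous block whose letters are disjoint from those of $\alpha$, and since the moves that collapse such a block use only an isomorphism and H1, these moves remain valid homotopy moves inside $\cover{\gamma}{r}$ and leave the $\alpha$ part untouched; therefore $\cover{\gamma}{r}$ is homotopic to $\alpha$. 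Letting $\Lambda$ be the virtual string represented by $\gamma$, we conclude $u_\Lambda(t)=u(t)$ and $\cover{\Lambda}{r}=\Gamma$.

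The one point requiring care is the last one: composition of nanowords is not well defined up to homotopy (witness the $\delta\delta$ example of Section~\ref{sec:nanowords}), so one cannot simply discard an appended factor on the grounds that its covering is homotopically trivial; instead one must observe that the relevant reducing moves are local and act on a contiguous block of fresh letters, hence stay legitimate in the larger nanoword. Everything else is a routine comparison of coefficients.
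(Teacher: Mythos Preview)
Your proof is correct and follows precisely the route the paper sketches in the paragraph preceding the corollary: decompose $u(t)$ as an integer combination of the polynomials $t^p-pt$ and realize it by concatenating $\beta$ with copies of $\alpha_{1,p}$ and $\alpha_{p,1}$. Your explicit verification that the $AA$-type factors arising in $\cover{\gamma}{r}$ can be removed by local H1 moves (rather than by appealing to homotopy of a factor, which would be illegitimate since composition is not a homotopy invariant) makes rigorous a point the paper leaves implicit.
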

\par
We can use coverings to define some numeric invariants of virtual
strings. The following proposition suggests one such invariant.
\par
\begin{prop}
For a virtual string $\Gamma$, there
exists an integer $m$ such that for all $n$ greater than or equal to $m$,
$\cover{\Gamma}{n}$ is $\cover{\Gamma}{0}$.
\end{prop}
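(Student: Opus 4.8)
The plan is to exploit the bound on $\abs{\n(X)}$ that was noted immediately after the definition of $\n$: for any nanoword $\alpha$ representing $\Gamma$, every letter $X$ of $\alpha$ satisfies $\abs{\n(X)} < \rank(\alpha)$. Fix such a nanoword $\alpha$ and set $m = \rank(\alpha)$. I claim this $m$ works.

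First I would recall the definition of the \textcover{r} operation at the level of nanowords: $\cover{\alpha}{r}$ is obtained from $\alpha$ by deleting every letter $X$ with $\n(X)$ not divisible by $r$. Now suppose $n \geq m = \rank(\alpha)$. For any letter $X$ in $\alpha$ we have $\abs{\n(X)} < \rank(\alpha) \leq n$, so the only way $n$ can divide $\n(X)$ is if $\n(X) = 0$. Likewise, since $\n(X) = 0$ is divisible by $0$ and every nonzero integer fails to be divisible by $0$, the \textcover{0} of $\alpha$ is exactly the nanoword obtained by deleting all letters $X$ with $\n(X) \neq 0$. Hence for every $n \geq m$ the deletion rule defining $\cover{\alpha}{n}$ removes precisely the same set of letters as the rule defining $\cover{\alpha}{0}$, namely $\{X : \n(X) \neq 0\}$, and the surviving subword, together with its type map, is identical. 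Therefore $\cover{\alpha}{n} = \cover{\alpha}{0}$ as nanowords, and passing to the virtual strings they represent gives $\cover{\Gamma}{n} = \cover{\Gamma}{0}$.

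The only subtlety worth spelling out is the role of the divisibility convention for $0$: one must confirm that, under Turaev's (and the paper's extended) definition, ``$\n(X)$ divisible by $0$'' means $\n(X) = 0$, so that the $r = 0$ covering keeps exactly the zero-$\n$ letters. Once that convention is in hand, there is genuinely nothing more to prove — the argument is just the observation that a fixed finite integer cannot be a nontrivial divisor of any integer of strictly smaller absolute value. I expect no real obstacle here; the statement is essentially a stabilization remark, and the constant $m$ can be taken to be $\rank(\alpha)$ for any representing nanoword $\alpha$ (equivalently, one could take $m = \hr(\Gamma)$, which is independent of the choice of diagram, though that refinement is not needed for the proposition as stated).
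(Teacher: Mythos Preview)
Your proof is correct and follows essentially the same approach as the paper: choose a representing nanoword $\alpha$, set $m=\rank(\alpha)$, and use the bound $\abs{\n(X)}<\rank(\alpha)$ to conclude that for $n\geq m$ only letters with $\n(X)=0$ survive the \textcover{n}, which are exactly the letters retained by the \textcover{0}. The paper's argument is virtually identical, just stated more tersely.
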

\begin{proof}
Consider $\alpha$ a nanoword with finite rank which represents $\Gamma$. Then for any
 letter $X$ in $\alpha$, we have already observed that $\abs{\n(X)}$ is less
 than $\rank(\alpha)$. If we set $m$ to be 
 $\rank(\alpha)$ then for all $n$ greater than or equal to $m$, only the
 letters $X$ in $\alpha$ with $\n(X)$ equal to $0$ will appear in the
 \textcover{n}. Thus $\cover{\Gamma}{n}$ is $\cover{\Gamma}{0}$.
\end{proof}
Thus we can define $\m(\Gamma)$ to be the minimal integer $m$ such
that for all $n$ greater than or equal to $m$, $\cover{\Gamma}{n}$ is
$\cover{\Gamma}{0}$. The proposition shows that $\m(\Gamma)$ is
always defined. Of course, the minimal such $m$ may be less than
$\rank(\alpha)$. In Example \ref{ex:simple_covering} the rank of the
initial nanoword was $3$ but all the coverings except for the
\textcover{1} were trivial.
\par
The proof of the proposition shows that $\m(\Gamma)$ is less than or
equal to $\hr(\Gamma)$. In fact, the proof shows that if $m$ is
greater than the largest $\abs{\n(X)}$ for an $X$ in an $\alpha$
representing $\Gamma$, then $\cover{\Gamma}{m}$ is $\cover{\Gamma}{0}$.
Therefore,
\begin{equation*}
\m(\Gamma) \leq \max \bigl\{ \abs{\n(X)} \ \bigm| X\in \alpha \bigr\} +1
\end{equation*}
where $\alpha$ represents $\Gamma$ and $\rank(\alpha)$ is equal to
$\hr(\Gamma)$. 
If all the letters $X$ in $\alpha$ have $\n(X)$ equal to $0$ then
$\m(\Gamma)$ is $0$. In particular $\m(\trivial)$ is $0$.
\par
\begin{prop}
For a virtual string $\Gamma$ and a
 non-negative integer $r$ such that $\cover{\Gamma}{r}$ is not equal to $\Gamma$,
\begin{equation*}
 \hr(\cover{\Gamma}{r}) \leq \hr(\Gamma)-2.
\end{equation*}
\end{prop}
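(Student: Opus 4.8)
The plan is to work with a minimal-rank nanoword representative of $\Gamma$ and to count how many letters the covering operation deletes from it. First I would fix a nanoword $\alpha$ representing $\Gamma$ with $\rank(\alpha)=\hr(\Gamma)$; such an $\alpha$ exists by the definition of homotopy rank. Since $\cover{\Gamma}{r}$ is by definition the virtual string represented by $\cover{\alpha}{r}$, we have $\hr(\cover{\Gamma}{r}) \leq \rank(\cover{\alpha}{r})$, so it suffices to show that $\cover{\alpha}{r}$ has at least two fewer distinct letters than $\alpha$. Writing $D$ for the set of letters $X$ of $\alpha$ with $r$ not dividing $\n(X)$ (the letters removed in forming $\cover{\alpha}{r}$), we have $\rank(\cover{\alpha}{r}) = \rank(\alpha) - \abs{D}$, so the goal becomes $\abs{D} \geq 2$.

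The key step is the claim that $\abs{D}$ cannot equal $1$. Suppose it did, say $D=\{X\}$. Then $r$ divides $\n(Y)$ for every letter $Y\neq X$, hence $r$ divides $\sum_{Y\neq X}\n(Y)$; but by \eqref{eqn:n_zero_sum} this sum is $-\n(X)$, so $r$ divides $\n(X)$, contradicting $X\in D$. (When $r=0$, the statement ``$r$ divides $m$'' is read as ``$m=0$'', and exactly the same computation applies: if $\n(Y)=0$ for all $Y\neq X$ then $\n(X)=0$.) Therefore $\abs{D}\in\{0,2,3,\dots\}$. To finish, I would rule out $\abs{D}=0$: if no letters are deleted then $\cover{\alpha}{r}=\alpha$ as nanowords, so $\cover{\Gamma}{r}=\Gamma$, contrary to hypothesis. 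Consequently $\abs{D}\geq 2$, and
\begin{equation*}
\hr(\cover{\Gamma}{r}) \leq \rank(\cover{\alpha}{r}) = \rank(\alpha) - \abs{D} \leq \hr(\Gamma) - 2 .
\end{equation*}

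I do not expect a real obstacle here; the argument is short once one observes the parity-style consequence of \eqref{eqn:n_zero_sum}. The two points that need a little care are the reading of divisibility when $r=0$ (handled above), and the passage from the hypothesis $\cover{\Gamma}{r}\neq\Gamma$ to the conclusion that a letter is genuinely removed from the chosen minimal $\alpha$, which uses that $\cover{\alpha}{r}=\alpha$ as a nanoword forces $\cover{\Gamma}{r}=\Gamma$. Note the hypothesis $\cover{\Gamma}{r}\neq\Gamma$ automatically excludes $r=1$, consistent with the fact that $\cover{\Gamma}{1}$ is always $\Gamma$.
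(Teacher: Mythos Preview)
Your argument is correct and follows essentially the same route as the paper: choose a minimal-rank nanoword, use the identity \eqref{eqn:n_zero_sum} to rule out the deletion of exactly one letter, and conclude that at least two letters are removed. The only cosmetic difference is that the paper splits into the cases $r\geq 2$ and $r=0$ separately, whereas you handle both at once via the convention that ``$0$ divides $m$'' means $m=0$.
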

\begin{proof}
For a given virtual string $\Gamma$, we can
find a nanoword $\alpha$ which represents $\Gamma$ and has minimal rank.
So $\rank(\alpha)$ is equal to $\hr(\Gamma)$. When we take the
 \textcover{r}, we get a nanoword $\cover{\alpha}{r}$ representing
 $\cover{\Gamma}{r}$. By assumption, 
 $\cover{\alpha}{r}$ does not equal $\alpha$, so we must have
deleted some letters from $\alpha$ to get the \textcover{r}.
\par
Assume that we deleted just a single letter $X$ from $\alpha$. 
We deal with two cases, $r$ is greater than or equal to $2$ and $r$ is
 equal to $0$. It is not possible that $r$ is $1$ because of the assumption that
 $\cover{\Gamma}{r}$ is not equal to $\Gamma$.
\par
When $r$ is greater than or equal to $2$,
$\n(X)$ must be
 equal to $pr+q$, $(0<q<r)$, for some integers $p$ and $q$.
All other letters $Y$ in $\alpha$ must have $\n(Y)$ equal to $kr$
 for some integer $k$ dependent on $Y$. We now calculate the sum of
 $\n(Z)$ for all letters $Z$ in $\alpha$.
\begin{equation*}
\sum_{Z \in \alpha}\n(Z) = \n(X) + \sum_{Y \neq X}\n(Y) \equiv q \pmod{r}.
\end{equation*}
However, by \eqref{eqn:n_zero_sum} the left hand side is $0$ and
 we have a contradiction.
\par
When $r$ is equal to $0$, $\n(X)$ must be equal to some non-zero
 integer $q$. All other letters $Y$ in $\alpha$ must have $\n(Y)$ equal
 to $0$. Then we have
\begin{equation*}
0 = \sum_{Z \in \alpha}\n(Z) = \n(X) + \sum_{Y \neq X}\n(Y) = q \neq 0
\end{equation*}
which is a contradiction.
\par
Thus, in either case, we must have deleted at least two letters. We have
\begin{equation*}
\hr(\cover{\Gamma}{r}) \leq \rank(\cover{\alpha}{r}) \leq
 \rank(\alpha)-2 = \hr(\Gamma)-2.
\end{equation*}
\end{proof}
\par
In particular the homotopy rank of the \textcover{r} can never be bigger
than the homotopy rank of the original virtual string.
\par
Fixing $r$, we can define a sequence of virtual strings for any virtual
string $\Gamma$ as follows. Set $\Gamma_0$ to be
$\Gamma$. Then define $\Gamma_i$ to be 
$\cover{(\Gamma_{i-1})}{r}$ for $i\geq 1$.
As the number of crossings in $\Gamma$ is finite and cannot increase when we
take the cover, there exists an $n$ such
that $\Gamma_{n+1}$ is equal to $\Gamma_{n}$. We can thus make the following definitions:
\begin{equation*}
\height_r(\Gamma):=\min \lbrace n|\Gamma_{n+1} = \Gamma_{n} \rbrace
\end{equation*}
and
\begin{equation*}
\base_r(\Gamma):=\Gamma_{\height_r(\Gamma)}.
\end{equation*}
It is clear that these are invariants of $\Gamma$.
\par
For $r$ not $1$, we can represent the action of the \textcover{r} map on
the set of 
virtual strings as a directed graph. The vertices of the graph
represent the virtual strings. For each virtual string $\Gamma$ we draw an
oriented edge from the vertex which represents it to the vertex which
represents $\cover{\Gamma}{r}$. By the above discussion it is clear that each
connected component of the graph will take the form of a tree with a
loop at the root point. Every vertex has an infinite number of
incoming edges and a single outgoing edge. Figure~\ref{fig:graph}
depicts a small part of the graph near the base of a single component.
\begin{figure}[hbt]
\begin{center}
\includegraphics{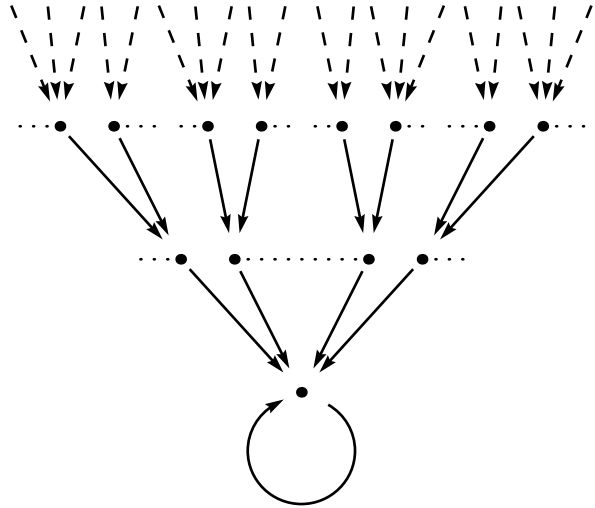}
\caption{Part of a single component of the graph of a covering map}
\label{fig:graph}
\end{center}
\end{figure}
\section{Fixed points under coverings}\label{sec:fixed}
For a fixed $r$, we can define the set of fixed points under the
\textcover{r} map by
\begin{equation*}
\baseSet{r}:= \lbrace \Gamma \in \setStrings | \cover{\Gamma}{r} = \Gamma \rbrace.
\end{equation*}
\par
Note that we could equivalently define the set by
\begin{equation*}
\baseSet{r}= \lbrace \base_r(\Gamma)| \Gamma \in \setStrings \rbrace,
\end{equation*}
or indeed by
\begin{equation*}
\baseSet{r}= \lbrace \Gamma \in \setStrings | \height_r(\Gamma) = 0 \rbrace.
\end{equation*}
\par
When $r$ is $1$, $\baseSet{1}$ is $\setStrings$. For other $r$ this is
not the case. Indeed, we showed that for any given virtual string
$\Gamma$ there are infinitely many virtual strings which are different
from $\Gamma$ but for which the \textcover{r} is $\Gamma$. Those virtual
strings are not in the set $\baseSet{r}$. On the other hand, we have
already noted that the trivial virtual string is in $\baseSet{r}$ for
all $r$. We now give a method for constructing more examples of virtual
strings in $\baseSet{r}$ for $r$ greater than $1$.
\par
Given a nanoword $\alpha$ and an integer $r$ greater than $1$ we define
a new nanoword from $\alpha$ in the following way. For each letter $A$
in $\alpha$ we replace the first occurence of $A$ with $r$ letters
$A_1A_2\dotso A_r$ and the second occurence of $A$ with $r$ letters
$A_r\dotso A_2A_1$ where the letters $A_i$ have the same type as $A$ for
all $i$. We call the resultant nanoword $r\cdot\alpha$. The construction
$r\cdot\alpha$ appears in Section~3.7, Exercise~2 of
\cite{Turaev:2004}.
\par
As an example, if $\alpha$ is the nanoword $\nanoword{ABACBC}{aab}$,
then $2\cdot\alpha$ is the nanoword
$A_1A_2B_1B_2A_2A_1C_1C_2B_2B_1C_2C_1$ where letters $C_1$ and $C_2$ are
of type $b$ and the other letters are of type $a$. 
\par
We note that this operation is not well-defined
for virtual strings. If $\alpha$ and $\beta$ are homotopic nanowords, it
is not necessarily true that $r\cdot\alpha$ and $r\cdot\beta$ are
homotopic. An example suffices to show this. Take $\alpha$ to be the
nanoword $\nanoword{ABCBDCAD}{aabb}$ and $\beta$ to be the nanoword
$\nanoword{BACDBCDA}{aabb}$. Then $\alpha$ and $\beta$ are homotopic (in
fact they are related by an H3b move involving $A$, $B$ and $D$) but, by
using based matrices, we can show that $2\cdot\alpha$ and $2\cdot\beta$
are not homotopic.
\par
We now consider the behaviour of $r\cdot\alpha$ under \textcover{r}.
For any letter $A_i$ in $r\cdot\alpha$, $\n(A_i)$ is equal to
$r\n(A)$. Thus $r\cdot\alpha$ is fixed under the \textcover{r} map. So
the virtual string represented by $r\cdot\alpha$ is in $\baseSet{r}$.
\par
We now note that if $\Gamma$ is in $\baseSet{r}$ then any nanoword
$\alpha$ representing $\Gamma$ which satisfies
$\rank(\alpha)=\hr(\alpha)$ must also satisfy 
$\cover{\alpha}{r}=\alpha$. 
Then $\alpha$ consists 
only of letters $X$ with $\n(X)$ equal to $kr$ for some integer $k$
dependent on $X$. Using this fact we can easily find
relationships between the sets of fixed points. We have
\begin{equation}\label{eqn:subset_zero}
\baseSet{0} \subset \baseSet{r}
\end{equation}
for all $r$ and
\begin{equation}\label{eqn:subset_multiple}
\baseSet{kr} \subset \baseSet{r}
\end{equation}
for all $k\geq 2$ and for all $r \geq 2$.
\begin{prop}
For distinct natural numbers $p$ and $q$, write $l$ for the lowest common
 multiplier of $p$ and $q$, and write $g$ for the greatest common
 divisor of $p$ and $q$. Then
\begin{equation}\label{eqn:base_lcm}
\baseSet{p} \cap \baseSet{q} = \baseSet{l}
\end{equation}
and
\begin{equation}\label{eqn:base_gcd}
\baseSet{p} \cap \baseSet{q} \subset \baseSet{g}.
\end{equation}
\end{prop}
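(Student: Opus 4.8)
The plan is to reduce both statements to an elementary divisibility criterion for membership in $\baseSet{r}$, which comes essentially for free from the observation recorded immediately before the proposition. First I would fix a virtual string $\Gamma$ together with a nanoword $\alpha$ representing it of minimal rank, so that $\rank(\alpha)=\hr(\Gamma)$. The claim to establish is that for every $r\geq 1$ one has $\Gamma\in\baseSet{r}$ if and only if $r$ divides $\n(X)$ for every letter $X$ of $\alpha$. For $r=1$ both conditions hold trivially. For $r\geq 2$: if every $\n(X)$ is divisible by $r$ then $\cover{\alpha}{r}=\alpha$ as nanowords, hence $\cover{\Gamma}{r}=\Gamma$; conversely, if $\Gamma\in\baseSet{r}$, the quoted observation forces $\cover{\alpha}{r}=\alpha$ for this minimal-rank representative, which is possible only if no letter is deleted, i.e.\ $r\mid\n(X)$ for all $X$.

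The key feature of this criterion is that a single nanoword $\alpha$ --- any fixed minimal-rank representative of $\Gamma$ --- simultaneously detects membership of $\Gamma$ in $\baseSet{r}$ for every $r$. Keeping $\alpha$ fixed, then, $\Gamma\in\baseSet{p}\cap\baseSet{q}$ holds if and only if every integer $\n(X)$ is divisible both by $p$ and by $q$; by the elementary fact that $p\mid m$ and $q\mid m$ together are equivalent to $l\mid m$, this is equivalent to every $\n(X)$ being divisible by $l=\operatorname{lcm}(p,q)$, that is, to $\Gamma\in\baseSet{l}$. This gives \eqref{eqn:base_lcm}. For \eqref{eqn:base_gcd}, suppose $\Gamma\in\baseSet{p}\cap\baseSet{q}=\baseSet{l}$; then $l\mid\n(X)$ for every letter $X$ of the chosen $\alpha$, and since $g=\gcd(p,q)$ divides $l$ we get $g\mid\n(X)$ for all $X$, so $\Gamma\in\baseSet{g}$. (Alternatively, granted \eqref{eqn:base_lcm}, this follows from $\baseSet{l}\subset\baseSet{g}$, a mild extension of \eqref{eqn:subset_multiple}, where $p\neq q$ is used to ensure $l>g$ so that $l$ is a proper multiple of $g$.)

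I do not anticipate a real obstacle: the combinatorial substance is already contained in the observation preceding the proposition, and what is left is purely the arithmetic of $\operatorname{lcm}$ and $\gcd$. The only point requiring care --- and the reason the argument goes through --- is that the divisibility criterion must be applied with one common minimal-rank representative $\alpha$ of $\Gamma$, so that the conditions ``$p\mid\n(X)$ for all $X$'' and ``$q\mid\n(X)$ for all $X$'' literally constrain the same finite set of integers $\{\n(X) : X\in\alpha\}$; this is precisely why the quoted observation is phrased for every minimal-rank representative rather than merely for some representative.
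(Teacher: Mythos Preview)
Your proposal is correct and follows essentially the same approach as the paper: both arguments rest on the observation that a minimal-rank representative $\alpha$ of $\Gamma$ satisfies $\cover{\alpha}{r}=\alpha$ whenever $\Gamma\in\baseSet{r}$, reducing the question to elementary divisibility of the integers $\n(X)$, and then invoking the arithmetic of $\operatorname{lcm}$ and $\gcd$. The only cosmetic difference is that you package the two directions of \eqref{eqn:base_lcm} into a single ``if and only if'' criterion, whereas the paper treats the inclusion $\baseSet{l}\subset\baseSet{p}\cap\baseSet{q}$ separately via \eqref{eqn:subset_multiple}; your emphasis on using one common minimal-rank $\alpha$ for all $r$ is exactly the point that makes this packaging work.
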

\begin{proof}
If $\Gamma$ is in $\baseSet{p} \cap \baseSet{q}$, then by the remark above
 there exists a nanoword $\alpha$ which represents $\Gamma$ for which
$\cover{\alpha}{p}=\alpha$ and $\cover{\alpha}{q}=\alpha$.
Then $\alpha$ consists only of letters $X$ with $\n(X)=0$ or $\n(X)$
 divisible both by $p$ and by $q$. In the latter case, this means that
 $l$ also divides $\n(X)$. Thus $\Gamma$ is in $\baseSet{l}$ and
\begin{equation*}
\baseSet{p} \cap \baseSet{q} \subseteq \baseSet{l}.
\end{equation*}
As $l$ is a multiple of both $p$ and $q$, $\baseSet{l}$ is a subset of
 both $\baseSet{p}$ and $\baseSet{q}$ by
 \eqref{eqn:subset_multiple}.
This proves \eqref{eqn:base_lcm}.
\par
As $l$ is a multiple of $g$, we can use
 \eqref{eqn:subset_multiple} and \eqref{eqn:base_lcm} to show
 \eqref{eqn:base_gcd}.
\end{proof}
\begin{prop}
We have
\begin{equation*}
\bigcap_{i=1}^\infty \baseSet{i} = \baseSet{0}.
\end{equation*}
\end{prop}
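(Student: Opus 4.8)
The plan is to prove the two inclusions separately. The inclusion $\baseSet{0} \subseteq \bigcap_{i=1}^\infty \baseSet{i}$ is immediate from \eqref{eqn:subset_zero}: since $\baseSet{0} \subset \baseSet{i}$ for every $i \geq 1$, the set $\baseSet{0}$ is contained in the intersection. So the real content is the reverse inclusion $\bigcap_{i=1}^\infty \baseSet{i} \subseteq \baseSet{0}$.

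For that, I would take an arbitrary virtual string $\Gamma$ in the intersection and pick a nanoword $\alpha$ representing $\Gamma$ with $\rank(\alpha) = \hr(\Gamma)$; such an $\alpha$ exists because $\Gamma$ has finite homotopy rank. The key step is to invoke the remark preceding \eqref{eqn:subset_zero}: since $\Gamma \in \baseSet{i}$ for every $i \geq 1$, that remark guarantees $\cover{\alpha}{i} = \alpha$ for this one fixed minimal-rank representative, simultaneously for all $i$. In particular no letters are removed when forming the $i$-covering, so every letter $X$ in $\alpha$ must satisfy $i \mid \n(X)$ for all $i \geq 1$.

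From there the argument is a one-line number-theoretic observation: since $\alpha$ has finite rank, $\abs{\n(X)}$ is finite for each letter $X$, and taking $i = \abs{\n(X)} + 1$ forces $\n(X) = 0$. Hence every letter of $\alpha$ has $\n(X) = 0$, so no letter is removed under the $0$-covering either, giving $\cover{\alpha}{0} = \alpha$ and therefore $\cover{\Gamma}{0} = \Gamma$, i.e. $\Gamma \in \baseSet{0}$.

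I do not expect any genuine obstacle here; the only point requiring a little care is the passage from ``$\Gamma$ is fixed by the $i$-covering for each $i$'' to ``a single minimal-rank nanoword $\alpha$ representing $\Gamma$ is fixed by the $i$-covering for each $i$.'' This is exactly the content of the remark quoted just before \eqref{eqn:subset_zero}, and it is essential, since it is what lets us reason about one concrete nanoword and conclude that all of its $\n$-values vanish.
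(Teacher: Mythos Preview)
Your proof is correct and follows essentially the same approach as the paper's: both establish the nontrivial inclusion by choosing a nanoword representative $\alpha$ of $\Gamma$, using membership in $\baseSet{i}$ for a suitably large $i$ (the paper takes the single value $i = \rank(\alpha)$ and uses $\lvert \n(X)\rvert < \rank(\alpha)$, while you take $i = \lvert \n(X)\rvert + 1$ letter by letter) to force $\n(X)=0$ for every letter, and concluding $\cover{\alpha}{0}=\alpha$. Your explicit invocation of the minimal-rank remark to justify $\cover{\alpha}{i}=\alpha$ is in fact a bit more careful than the paper, which leaves that step implicit.
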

\begin{proof}
From \eqref{eqn:subset_zero} we can see that
\begin{equation*}
\bigcap_{i=1}^\infty \baseSet{i} \supseteq \baseSet{0}.
\end{equation*}
\par
Now assume that equality does not hold. That is, we assume there exists a
 virtual string $\Gamma$ in $\setStrings$ which is in the intersection but not in
 $\baseSet{0}$. Then for all $r$ greater $0$, $\cover{\Gamma}{r}$ is
 $\Gamma$. However, as $\Gamma$ is in $\setStrings$, $\Gamma$ is
 represented by a nanoword $\alpha$ which has finite rank $n$.
\par
Consider $\cover{\alpha}{n}$. This must be $\alpha$ by our assumption that
 $\Gamma$ is in the intersection. This means that every letter $X$ in
 $\alpha$ has $\n(X)$ equal to $nk$ for some integer $k$. Now as $\alpha$ has
 rank $n$, we know that $\abs{\n(X)}$ must be less than $n$ for all
 $X$. Thus $\n(X)$ must be $0$ for all $X$. However, this means that
 $\cover{\alpha}{0}$ is $\alpha$ and thus that $\Gamma$ is in
 $\baseSet{0}$ which contradicts our initial assumption. Thus equality
 holds and the proof is complete.
\end{proof}
\begin{thm}
For $r$ greater than $1$, $\baseSet{r}$ is infinite in size.
\end{thm}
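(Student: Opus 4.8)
The plan is to exhibit an explicit infinite family of pairwise non-homotopic virtual strings, all fixed by the \textcover{r} map, and to separate its members using a homotopy invariant. The natural candidates are certain members of the two-parameter family $\Gamma_{p,q}$ from Example~\ref{ex:alpha_pq}, whose $\n$-values and $u$-polynomials are already known explicitly; this explicitness is what lets us check membership in $\baseSet{r}$ directly rather than only up to homotopy.

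First I would observe that whenever $p$ and $q$ are both divisible by $r$, the nanoword $\alpha_{p,q}$ is genuinely fixed by the \textcover{r}. Indeed, by the computation recalled in Example~\ref{ex:alpha_pq}, every letter $X_i$ of $\alpha_{p,q}$ has $\n(X_i)=q$ and every letter $Y_j$ has $\n(Y_j)=-p$; if $r$ divides both $p$ and $q$, then $r$ divides $\n$ of every letter, so no letter is deleted in forming $\cover{\alpha_{p,q}}{r}$. Hence $\cover{\alpha_{p,q}}{r}=\alpha_{p,q}$, so $\Gamma_{p,q}\in\baseSet{r}$. (Since $\alpha_{p,q}$ has finite rank $p+q$, these are indeed elements of $\setStrings$.)

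Next I would specialise to $p=r$ and $q=kr$ with $k$ ranging over the integers $\geq 2$, obtaining an infinite list $\Gamma_{r,kr}\in\baseSet{r}$. To see that infinitely many of these are distinct, I would use the $u$-polynomial, which is a homotopy invariant: by Example~\ref{ex:alpha_pq} the $u$-polynomial of $\Gamma_{r,kr}$ is $r\,t^{kr}-kr\,t^{r}$. Here the hypothesis $r>1$ is used: it forces $r<kr$, so this polynomial has degree exactly $kr$, and as $k$ runs through $\{2,3,4,\dots\}$ the degrees $kr$ are all different. Hence the polynomials $r\,t^{kr}-kr\,t^{r}$ are pairwise distinct, so the virtual strings $\Gamma_{r,kr}$ are pairwise non-homotopic, and $\baseSet{r}$ is infinite.

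There is no serious obstacle here; the only point needing a little care is to be sure the chosen nanowords are \emph{literally} unchanged by the covering (not merely sent to a homotopic nanoword), which is why it is worth working with the family $\Gamma_{p,q}$ whose $\n$-values are pinned down in the example. The role of the assumption $r>1$ is exactly to guarantee $r<kr$, which makes the degrees $kr$ strictly increasing and hence the $u$-polynomials, and the virtual strings, mutually distinct.
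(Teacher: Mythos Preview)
Your proof is correct and follows essentially the same approach as the paper: both exhibit the family $\Gamma_{p,q}$ with $r\mid p$ and $r\mid q$ as fixed points of the \textcover{r} and separate infinitely many of them using the $u$-polynomial (the paper uses the full two-parameter family $\Gamma_{jr,kr}$ and cites Example~\ref{ex:alpha_pq} for mutual distinctness, while you take the one-parameter slice $\Gamma_{r,kr}$ and argue directly by degree). One cosmetic remark: your claim that the hypothesis $r>1$ is what forces $r<kr$ is not quite right, since $k\geq 2$ already gives $kr>r$ for every $r\geq 1$; the hypothesis $r>1$ plays no essential role in your argument (nor in the paper's), and the case $r=1$ is excluded only because $\baseSet{1}=\setStrings$ is handled separately.
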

\begin{proof}
Recall the virtual string $\Gamma_{p,q}$ from Example \ref{ex:alpha_pq}.
Considering the cover $\cover{(\Gamma_{p,q})}{r}$, it is clear
that this is $\Gamma_{p,q}$ if $r$ divides both $p$ and $q$, and trivial
otherwise. Thus $\Gamma_{jr,kr}$ for $j,k\geq 1$ is a 2-parameter family
 of virtual strings which are fixed under the \textcover{r} and so are
 all in $\baseSet{r}$. In Example \ref{ex:alpha_pq} we saw that these
 virtual strings are mutually distinct. Thus $\baseSet{r}$ is infinite
 in size.
\end{proof}
Now consider the two families $\Gamma_{kr,r}$ and
$\Gamma_{r,kr}$. Elements in these families have the property that if we
take the \textcover{p} for $p$ greater than $r$ we get the trivial
virtual string. We also note that the \textcover{0} of these virtual
strings are trivial. Thus the set 
\begin{equation*}
\baseSet{r} - \baseSet{0} - \bigcup_{i=r+1}^{\infty}\baseSet{i}
\end{equation*}
is infinite in size.
\par
We now consider the set $\baseSet{0}$. We know that the trivial virtual
string is in this set. We now show that this set has other members.
\par 
Consider the family of virtual strings $\Gamma_n$ for $n \geq 3$
represented by the nanowords $\alpha_n$ given by
\begin{equation*}
X_0X_{n-1}X_1X_0X_2X_1 \dotso X_{i}X_{i-1}X_{i+1}X_{i} \dotso X_{n-2}X_{n-3}X_{n-1}X_{n-2}
\end{equation*}
where $\xType{X_i}$ is $a$ for $0 \leq i \leq n-2$ and $\xType{X_{n-1}}$ is
 $b$.
Then $\n(X_i)$ is $0$ for all $i$ and so $\cover{(\alpha_n)}{0}$ is $\alpha_n$ for
 all $n$. Thus $\alpha_n$ is in $\baseSet{0}$. By appropriate shift and
 homotopy moves it is possible to show that when $n$ is $3$, $4$ or $6$,
 $\alpha_n$ is homotopically trivial. For the remaining $n$ we have the
 following lemma.
\begin{lem}
For $n=5$ or $n \geq 7$ the nanowords $\alpha_n$ are non-trivial and mutually
 distinct under homotopy. 
\end{lem}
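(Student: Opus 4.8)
The plan is to show that the primitive based matrix $P(\Gamma_n)$ has exactly $n+1$ elements whenever $n=5$ or $n\geq 7$, so that $\rho(\Gamma_n)=n$. Since $\rho$ is a homotopy invariant and $\rho(\trivial)=0$, this gives non-triviality immediately (as $n\geq 5>0$), and since $\rho(\Gamma_m)=m\neq n=\rho(\Gamma_n)$ for $m\neq n$, it gives mutual distinctness. Thus the whole lemma reduces to computing $M(\alpha_n)$ and checking that it is already primitive for these $n$.

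First I would read the head and tail matrices of $\alpha_n$ off its arrow diagram. Ordering the letters cyclically as $X_0,X_1,\dots,X_{n-1}$ and using that $X_{n-1}$ has type $b$ while all other letters have type $a$ (so the arrow for $X_{n-1}$ is the only one pointing leftwards), a direct inspection of the word $\alpha_n$ shows that as we travel from the tail of $X_i$ to its head we pass exactly the tail of $X_{i+1}$ and the head of $X_{i-1}$, with indices taken mod $n$. Hence $T(\alpha_n)$ is the cyclic permutation matrix $C$ with $C_{i,i+1}=1$, and $H(\alpha_n)=\transpose{C}$. In particular $l(X_i,X_j)=(C-\transpose{C})_{ij}$, so $\n(X_i)=0$ for every $i$; this confirms the remark preceding the lemma and shows that in $M(\alpha_n)$ the vector $\cvector{n}$ is zero.

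Next I would compute the remaining block $B=T-H+T\transpose{H}-H\transpose{T}=C-\transpose{C}+C^2-\transpose{(C^2)}$. Row $i$ of $B$ has entry $+1$ in columns $i+1$ and $i+2$, entry $-1$ in columns $i-1$ and $i-2$, and $0$ elsewhere. When $n\geq 5$ the columns $i-2,i-1,i,i+1,i+2$ are pairwise distinct, so no row of $B$ vanishes; together with $\cvector{n}=0$ this means $M(\alpha_n)$ has neither an annihilating element nor, since $b(s,g)=0$ for all $g$, a core element. Again because $b(s,g)=0$, a complementary pair would be a pair $i\neq j$ with row $i$ of $B$ equal to minus row $j$; matching the positions of the $+1$ entries forces $j=i+3$, and then matching the positions of the $-1$ entries forces $n\mid 6$. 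Hence for $n=5$ or $n\geq 7$ there is no complementary pair, $M(\alpha_n)$ is primitive, $P(\Gamma_n)=M(\alpha_n)$ has $n+1$ elements, and $\rho(\Gamma_n)=n$, which completes the argument.

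The only genuine obstacle is the combinatorial bookkeeping in the second step: correctly describing the arrow diagram of $\alpha_n$ and verifying that $T(\alpha_n)$ and $H(\alpha_n)$ really are the shift matrices. Once that is in hand, everything else is a short linear-algebra check. As a consistency test, the same computation degenerates for $n=3,4,6$: one finds $B=0$ when $n=3$, while for $n=4$ and $n=6$ the matrix $B$ does admit complementary pairs, after whose removal the based matrix reduces to $(\{s\},s,\emptyset)$ — matching the fact, noted just before the lemma, that $\alpha_3$, $\alpha_4$ and $\alpha_6$ are homotopically trivial.
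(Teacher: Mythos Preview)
Your proposal is correct and follows essentially the same route as the paper: compute the based matrix of $\alpha_n$, verify that for $n=5$ or $n\geq 7$ it is already primitive, and conclude via $\rho(\Gamma_n)=n$. The paper carries out the computation of $b(X_i,X_j)$ by a direct case analysis on $i-j\in\{-2,-1,0,1,2\}$ versus $\lvert i-j\rvert\geq 3$, arriving at exactly your description of the rows of $B$; your packaging of the same calculation as $B=C-\transpose{C}+C^2-\transpose{(C^2)}$ with $C$ the cyclic shift is a tidy shortcut but not a different idea. The paper's complementary-pair argument likewise plugs in $X=X_{j\pm 1},X_{j\pm 2}$ to force first $j\equiv i+3$ and then $n\mid 6$, which is precisely your ``match the $+1$ positions, then the $-1$ positions'' step.
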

\begin{proof}
In the following calculations we consider the subscripts of
 the letters $X_i$ to be in $\cyclic{n}$.
\par
We have
\begin{equation}\label{eqn:baseSet0Tail}
t(X_i,X_j) =
\begin{cases}
1 & \text{if $j=i+1$} \\
0 & \text{otherwise}
\end{cases}
\end{equation}
and
\begin{equation}\label{eqn:baseSet0Head}
h(X_i,X_j) =
\begin{cases}
1 & \text{if $j=i-1$} \\
0 & \text{otherwise.}
\end{cases}
\end{equation}
Now
\begin{align*}
b(X_i,X_j) = & t(X_i,X_j) - h(X_i,X_j) + \\
& \sum_X \bigl( t(X_i,X)h(X_j,X) - h(X_i,X)t(X_j,X) \bigr)
\end{align*}
where the sum is taken over all the letters $X_k$. However, by
 \eqref{eqn:baseSet0Head} and \eqref{eqn:baseSet0Tail} most of the elements
 in the sum will be zero and so we only need to consider the cases when
 $X$ is $X_{i-1}$, $X_{i+1}$, $X_{j-1}$ or $X_{j+1}$. In the
 calculations below we write $s(X_i,X_j)$ for the sum
\begin{equation*}
\sum_X \bigl( t(X_i,X)h(X_j,X) - h(X_i,X)t(X_j,X) \bigr).
\end{equation*}
We also assume
 that $n \geq 5$.
\par
Of course, if $i$ equals $j$, $b(X_i,X_j)$ is $0$.
We consider the following cases: $i=j-2$; $i=j-1$; $i=j+1$; $i=j+2$;
 $\abs{i-j} \geq 3$.
\par
Case $i=j-2$: as $X_{j-1} = X_{i+1}$
the sum is over $3$ elements ($X_{i-1}$, $X_{i+1}$ and $X_{i+3}$).
\begin{align*}
s(X_i,X_{i+2}) = 
 & t(X_i,X_{i-1})h(X_{i+2},X_{i-1}) - h(X_i,X_{i-1})t(X_{i+2},X_{i-1}) + \\
 & t(X_i,X_{i+1})h(X_{i+2},X_{i+1}) - h(X_i,X_{i+1})t(X_{i+2},X_{i+1}) + \\
 & t(X_i,X_{i+3})h(X_{i+2},X_{i+3}) - h(X_i,X_{i+3})t(X_{i+2},X_{i+3}) \\
= & 0 - t(X_{i+2},X_{i-1}) + 1 - 0 + 0 - h(X_i,X_{i+3}) \\
= & 1
\end{align*}
as $t(X_j,X_{j-3})$ and $h(X_i,X_{i+3})$ are both
 $0$ when $n \geq 5$. Thus 
\begin{align*}
b(X_i,X_{i+2}) = & t(X_i,X_{i+2}) - h(X_i,X_{i+2}) + s(X_i,X_{i+2}) \\
= & 1.
\end{align*}
\par
Case $i=j-1$: we have $X_{j-1} = X_i$
and $X_{j+1} = X_{i+2}$. Thus 
 the sum is over $4$ elements ($X_{i-1}$, $X_i$, $X_{i+1}$ and
 $X_{i+2}$).
\begin{align*}
s(X_i,X_{i+1}) =
 & t(X_i,X_{i-1})h(X_{i+1},X_{i-1}) - h(X_i,X_{i-1})t(X_{i+1},X_{i-1}) + \\
 & t(X_i,X_i)h(X_{i+1},X_i) - h(X_i,X_i)t(X_{i+1},X_i) + \\
 & t(X_i,X_{i+1})h(X_{i+1},X_{i+1}) - h(X_i,X_{i+1})t(X_{i+1},X_{i+1}) + \\
 & t(X_i,X_{i+2})h(X_{i+1},X_{i+2}) - h(X_i,X_{i+2})t(X_{i+1},X_{i+2}) \\
= & 0.
\end{align*}
Thus 
\begin{align*}
b(X_i,X_{i+1}) = & t(X_i,X_{i+1}) - h(X_i,X_{i+1}) + s(X_i,X_{i+1}) \\
= & 1.
\end{align*}
\par
Case $i=j+1$: by skew-symmetry and the case $i=j-1$ we have
\begin{equation*}
b(X_i,X_{i-1}) = -b(X_{i-1},X_i) = -1.
\end{equation*}
\par
Case $i=j+2$: by skew-symmetry and the case $i=j-2$ we have
\begin{equation*}
b(X_i,X_{i-2}) = -b(X_{i-2},X_i) = -1.
\end{equation*}
\par
Case $\abs{i-j} \geq 3$: in this case we have
\begin{equation*}
t(X_j,X_{i-1})=t(X_j,X_{i+1})=t(X_i,X_{j-1})=t(X_i,X_{j+1})=0
\end{equation*}
and
\begin{equation*}
h(X_j,X_{i-1})=h(X_j,X_{i+1})=h(X_i,X_{j-1})=h(X_i,X_{j+1})=0.
\end{equation*}
Thus $s(X_i,X_j)$ is $0$ and
\begin{equation*}
b(X_i,X_j) = t(X_i,X_j) - h(X_i,X_j) + s(X_i,X_j) = 0.
\end{equation*}
\par
Summarising the cases:
\begin{equation}\label{eqn:bm_zeroexample}
b(X_i,X_j) = 
\begin{cases}
 1 & \text{if $i-j=-1$ or $i-j=-2$ } \\
-1 & \text{if $i-j=1$ or $i-j=2$ } \\
 0 & \text{otherwise.}
\end{cases}
\end{equation}
\par
Now we have calculated the based matrix for $\alpha_n$ we can determine
 whether it has any annihilating, core or complementary elements.
As $b(X_i,X_{i+1})$ is non-zero and $\n(X_i)=0$ for all $i$, $X_i$ cannot
 be an annihilating element or a core element. We now assume that there
 are an $i$ and a $j$ such that $X_i$ and $X_j$ form a complementary
 pair. Then
\begin{equation}\label{eqn:bm_zeroexample_complementary}
b(X_i,X) + b(X_j,X) = b(s,X) = 0  
\end{equation}
for all $X$. By substituting $X_{j-2}$ for $X$ and using
 \eqref{eqn:bm_zeroexample} we obtain
\begin{equation*}
b(X_i,X_{j-2}) = 1.
\end{equation*}
Similarly, by substituting $X_{j-1}$ for $X$ we get
\begin{equation*}
b(X_i,X_{j-1}) = 1.
\end{equation*}
By comparing these last two equations with
 \eqref{eqn:bm_zeroexample} we can conclude that $i-(j-2)$ and $i-(j-1)$
 are in the set $\lbrace -1, -2 \rbrace$. Thus
\begin{equation}\label{eqn:bm_zeroexample_ij1}
 i \equiv j+3 \text{(mod $n$)}.
\end{equation}
\par
Similarly, by substituting $X_{j+2}$ and $X_{j+1}$ for $X$ in
 \eqref{eqn:bm_zeroexample_complementary} and using
 \eqref{eqn:bm_zeroexample} we have
\begin{equation*}
b(X_i,X_{j+2}) = b(X_i,X_{j+1}) = -1.
\end{equation*}
Comparison with \eqref{eqn:bm_zeroexample} implies that
 $i-(j+2)$ and $i-(j+1)$ are in the set $\lbrace 1, 2 \rbrace$. In this
 case \begin{equation}\label{eqn:bm_zeroexample_ij2}
i \equiv j-3 \text{(mod $n$)}.
\end{equation}
\par
Combining \eqref{eqn:bm_zeroexample_ij1} and
 \eqref{eqn:bm_zeroexample_ij2} we have 
\begin{equation*}
j+3 \equiv j-3 \text{(mod $n$)}
\end{equation*}
which gives
\begin{equation*}
0 \equiv 6 \text{(mod $n$)}.
\end{equation*}
As we assumed that $n \geq 5$, this implies that if $X_i$ and $X_j$ are
 a complementary pair then $n$ is $6$.
\par
So if $n=5$ or $n \geq 7$, $M(\alpha_n)$ is primitive. In particular this
 means that for these cases $\rho(\alpha_n)$ is equal to $n$. This shows that
 the $\alpha_n$ are non-trivial and distinct.
\end{proof}
By the lemma we have an infinite number of examples of members of
$\baseSet{0}$. Thus we have the following theorem.
\begin{thm}
The set $\baseSet{0}$ is infinite in size.
\end{thm}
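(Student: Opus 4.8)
The plan is to read the theorem off the preceding lemma with essentially no additional work. First I would recall the observation, already made in the paragraph introducing the family $\alpha_n$, that $\n(X_i)=0$ for every letter $X_i$ of $\alpha_n$ and every $n\geq 3$. By the definition of covering this forces $\cover{(\alpha_n)}{0}=\alpha_n$, so each virtual string $\Gamma_n$ represented by $\alpha_n$ lies in $\baseSet{0}$.

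Next I would invoke the lemma directly: for $n=5$ and for every $n\geq 7$ the nanowords $\alpha_n$ are mutually non-homotopic, hence the virtual strings $\Gamma_n$ are pairwise distinct. Therefore $\{\Gamma_n \mid n=5 \text{ or } n\geq 7\}$ is an infinite set of pairwise distinct elements of $\baseSet{0}$, and the theorem follows. The three exceptional values $n=3,4,6$, for which $\alpha_n$ was noted to be homotopically trivial, can simply be discarded from the family.

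There is no real obstacle here; all of the content sits in the lemma, whose proof computes the based matrix $M(\alpha_n)$ via the head and tail matrices and checks that it is primitive (so that $\rho(\Gamma_n)=n$, which simultaneously separates the $\Gamma_n$ and shows each is non-trivial). Were one to attempt the theorem without the lemma, the genuinely delicate point would be exactly that primitivity check --- in particular ruling out complementary pairs, where the entries of $b$ force a congruence of the form $0\equiv 6 \pmod n$ and hence confine the bad case to $n=6$.
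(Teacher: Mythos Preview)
Your proposal is correct and matches the paper's approach exactly: the theorem is deduced immediately from the lemma by observing that each $\Gamma_n$ lies in $\baseSet{0}$ (since $\n(X_i)=0$ for all letters of $\alpha_n$) and that the lemma provides infinitely many homotopically distinct such $\Gamma_n$.
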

\section{Geometric interpretation of coverings of
 nanowords}\label{sec:geomcover}
In \cite{Turaev:2004} Turaev gave a geometric interpretation of the
covering operation on nanowords which we explain in more detail here.
\par
Given a nanoword we can construct the corresponding diagram of a curve on
its canonical surface (we descibed this towards the end of
Section~\ref{sec:virtual_strings}).
The curve can then be considered as a graph on the
surface. If the nanoword has rank $n$, the curve has $n$ crossings and
so the graph has $n$ vertices and $2n$ edges. We assign labels
$0,1,\dotsc,2n-1$ to the edges by assigning $0$ to the edge with the
base point and
then, starting from that edge, follow the curve assigning labels in
order to the edges as they are traversed.
Figure~\ref{fig:5xprecover} gives an example of such a labelled diagram
corresponding to the nanoword $\nanoword{ABCDBEDEAC}{baaaa}$. 
\begin{figure}[hbt]
\begin{center}
\includegraphics{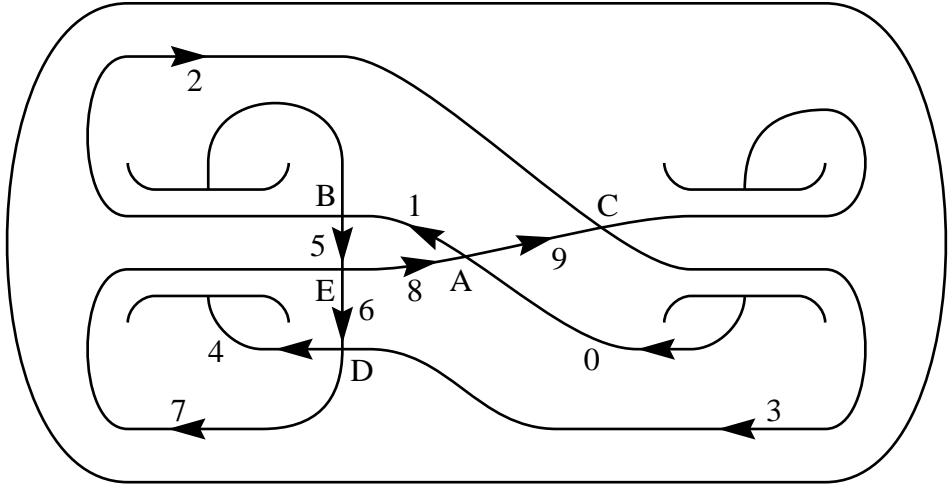}
\caption{A 5 crossing virtual string drawn on a closed genus 2 surface with edges labelled}
\label{fig:5xprecover}
\end{center}
\end{figure}
\par
By the construction of the canonical surface, cutting along the curve
splits the surface into one or more polygonal pieces. Each of the edges
of the polygons inherits a label and an orientation from the original
diagram. Obviously, we can reconstruct the canonical surface by glueing
edges with the same labels so that their orientations coincide.
Figure~\ref{fig:5xpoly} shows the result of this process on the diagram
in Figure~\ref{fig:5xprecover}.
\begin{figure}[hbt]
\begin{center}
\includegraphics{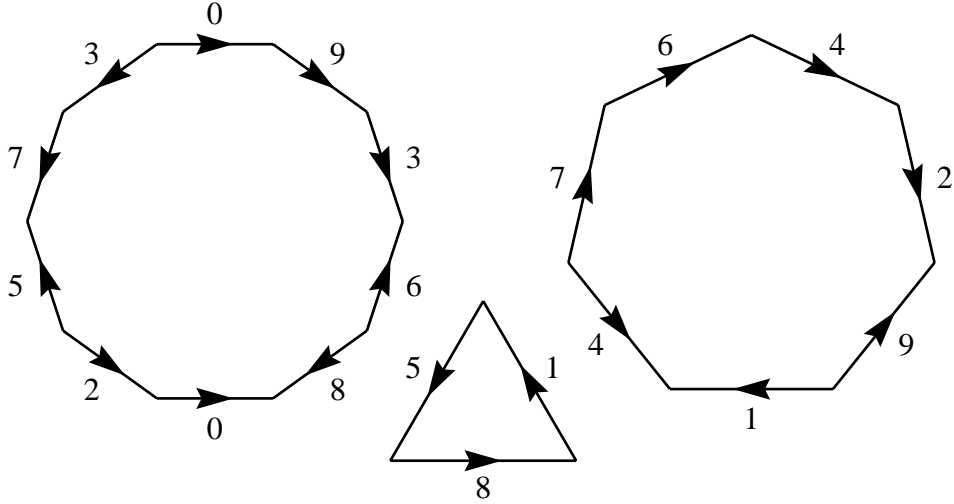}
\caption{Polygons with labelled oriented edges. Glueing edges with
 matching labels so that their orientations coincide gives the surface
 in Figure~\ref{fig:5xprecover}.}
\label{fig:5xpoly}
\end{center}
\end{figure}
\par
We now make an $m$-fold cover of the canonical surface (for $m$ an
integer greater than $1$).
We first make $m$ copies of the set of polygonal pieces and label the sets
$0,1,\dotsc,m-1$. We relabel the edges of each set as follows.
In the set of pieces labelled $i$, we relabel the edge
$j$ as $j_i$ if the orientation of the edge (relative to the polygon) is
anti-clockwise and relabel it $j_{(i-1)}$ ($i-1$ calculated modulo $n$) if
the orientation is clockwise. This labelling divides the edges into pairs. 
Figure~\ref{fig:5xpoly2} shows two copies of the polygons in
Figure~\ref{fig:5xpoly} labelled in the way described above.   
\begin{figure}[hbt]
\begin{center}
\includegraphics{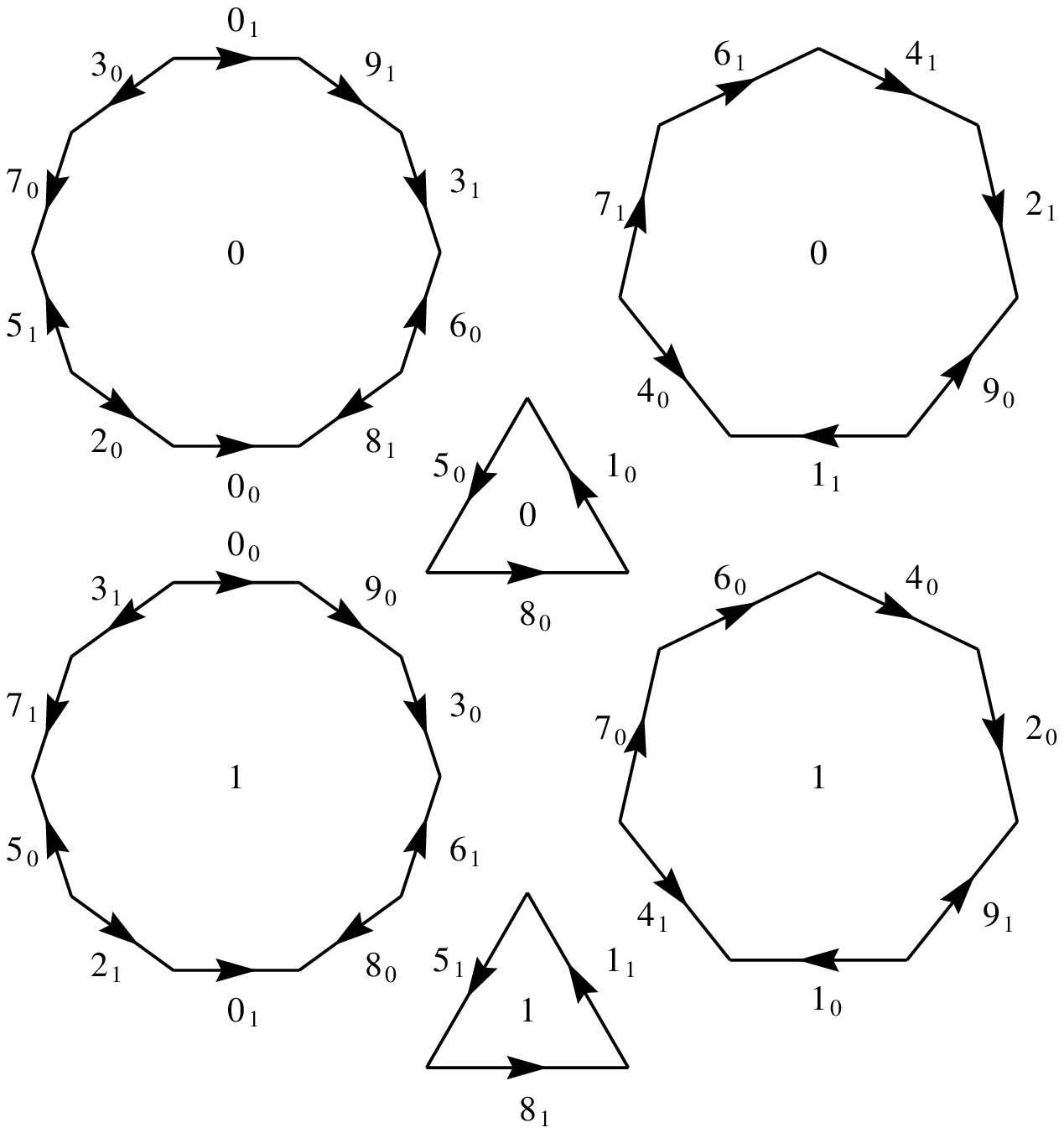}
\caption{Two copies of the polygons in Figure~\ref{fig:5xpoly} with
 edges relabelled}
\label{fig:5xpoly2}
\end{center}
\end{figure}
\par
By glueing the pairs of edges according to their
orientation we get a new closed surface. Figure~\ref{fig:5xcover} shows
the result of glueing the edges of Figure~\ref{fig:5xpoly2}.
\begin{figure}[hbt]
\begin{center}
\includegraphics{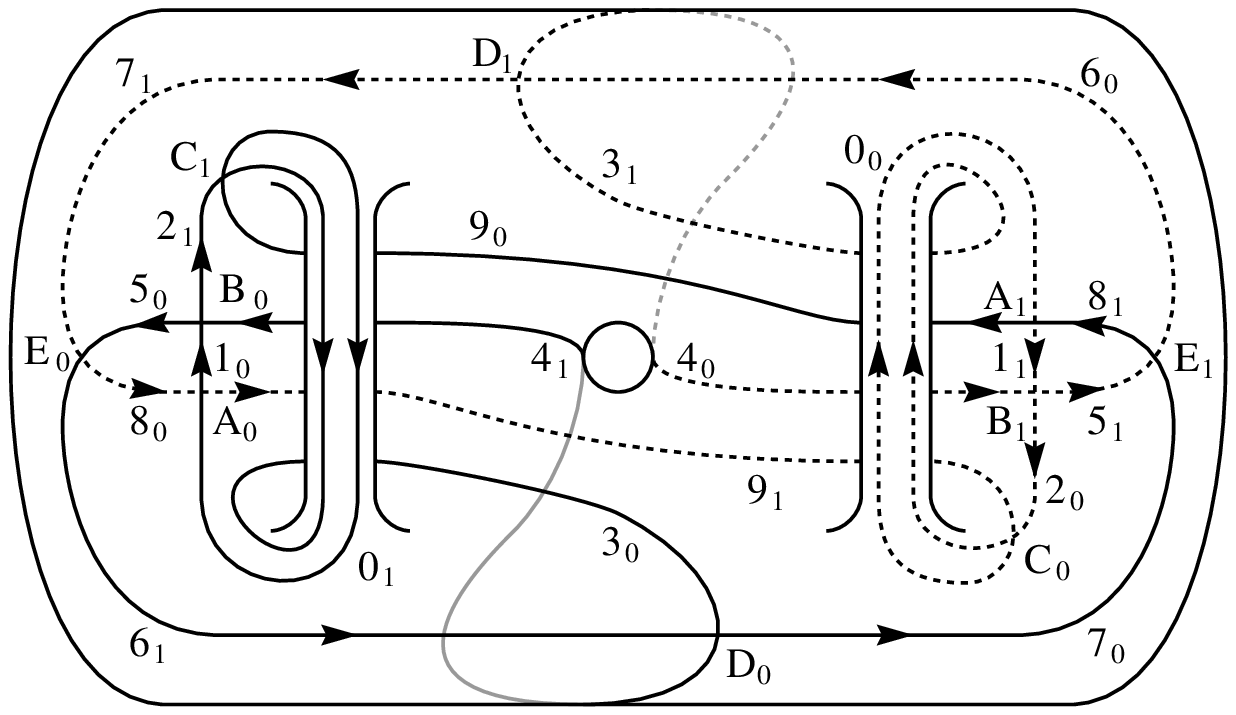}
\caption{The genus 3 surface constructed by glueing edges of the polygons in
 Figure~\ref{fig:5xpoly2}. Lines in light grey are on the underside
 of the surface. The glued edges form a 2 component virtual string. The
 components are distinguished by using dotted and solid lines.}
\label{fig:5xcover}
\end{center}
\end{figure}
\par
Note that for a given
edge $e$, the label of the polygon to the left of $e$, $l_e$ and the
label of the polygon to the right of $e$, $r_e$, satisfy the relation
\begin{equation}
r_e \equiv l_e + 1 \text{(mod $n$)}
\end{equation}
by construction.
\par
\begin{figure}[hbt]
\begin{center}
\includegraphics{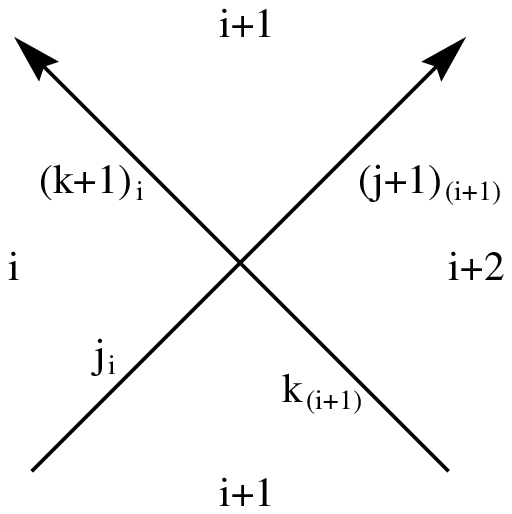}
\caption{Labels of polygons and edges around a vertex}
\label{fig:planelabel}
\end{center}
\end{figure}
The glued edges now form a graph on the constructed surface. 
In fact, each vertex of the graph is $4$-valent and the labels of the
polygons and edges meeting at a vertex have a fixed pattern as shown 
in Figure~\ref{fig:planelabel}.
Since there were $2n$ edges on the original surface, there are $2mn$
edges on the new surface. We now show that these edges form an
$m$-component virtual string.
\par
Consider the edge labelled $j_i$. It corresponds to the edge $j$ in the
original diagram. When we cross the vertex at the end of edge $j_i$, the
next edge corresponds to the edge $j+1$ (modulo $n$) in the original
diagram. If the edges at the vertex perpendicular to the edge $j_i$ are
oriented leftwards from the perspective of edge $j_i$ then the next edge
has subscript $i+1$ (modulo $m$). In other words, on the new surface the
edge $(j+1)_{i+1}$ follows the edge $j_i$. On the other hand, if the
edges at the vertex perpendicular to the edge $j_i$ are oriented
rightwards, then the edge $(j+1)_{i-1}$ follows the edge $j_i$. Both
these situations can be seen in Figure~\ref{fig:planelabel}.
\par
Consider the edge labelled $0_k$ for some $k$. This edge corresponds to
the edge $0$ in the original diagram. We now consider what happens to
this subscript as we traverse the curve in the original diagram. Each
time we pass through a crossing in the original diagram, this
corresponds to passing through a crossing in the new diagram. We have seen
that when we do this, passing through on one arc of the crossing
increases the subscript by one (modulo $n$) and passing through the
other decreases the subscript by one (modulo $n$).
When we get back to our starting point we have passed through each
crossing exactly twice. 
Therefore, when we get back to our
starting point, the net change on the edge label subscript is zero. This
means that in our new diagram a single component has exactly $2n$ edges
which are in one-to-one correspondance with the $2n$ edges in the
original 
diagram. Thus there are $m$ components in the covering and these
are indexed by the edge label subscript of edges corresponding to edge
$0$ in the original diagram. 
\par
The new diagram contains $mn$ crossings. By dropping the subscripts of
the edge labels around a vertex we
can see that these can be grouped in sets of $m$ crossings, each set
corresponding to a single crossing in the original diagram.
We can label the crossings in the new diagram by taking the label of the
corresponding crossing in the original diagram and adding a subscript
which is the label of the polygon to the left of both arcs in the
crossing. For example, this labelling scheme is used in
Figure~\ref{fig:5xcover}.
\par
The crossings in the new diagram can be divided
into two types. Those for which both arcs belong to the same component
and those for which the arcs belong to different components.
\par
Consider the single component containing the edge labelled $0_k$ for
some $k$ in the covering.
Then the edges of this component correspond to edges in the original
diagram and we can copy the edge label subscripts to the original
diagram under this correspondance.
For example, Figure~\ref{fig:5xpullback} shows the result of copying the
edge label subscripts of the single component containing $0_0$
 in Figure~\ref{fig:5xcover} back to the original diagram
 (Figure~\ref{fig:5xprecover}).
\begin{figure}[hbt]
\begin{center}
\includegraphics{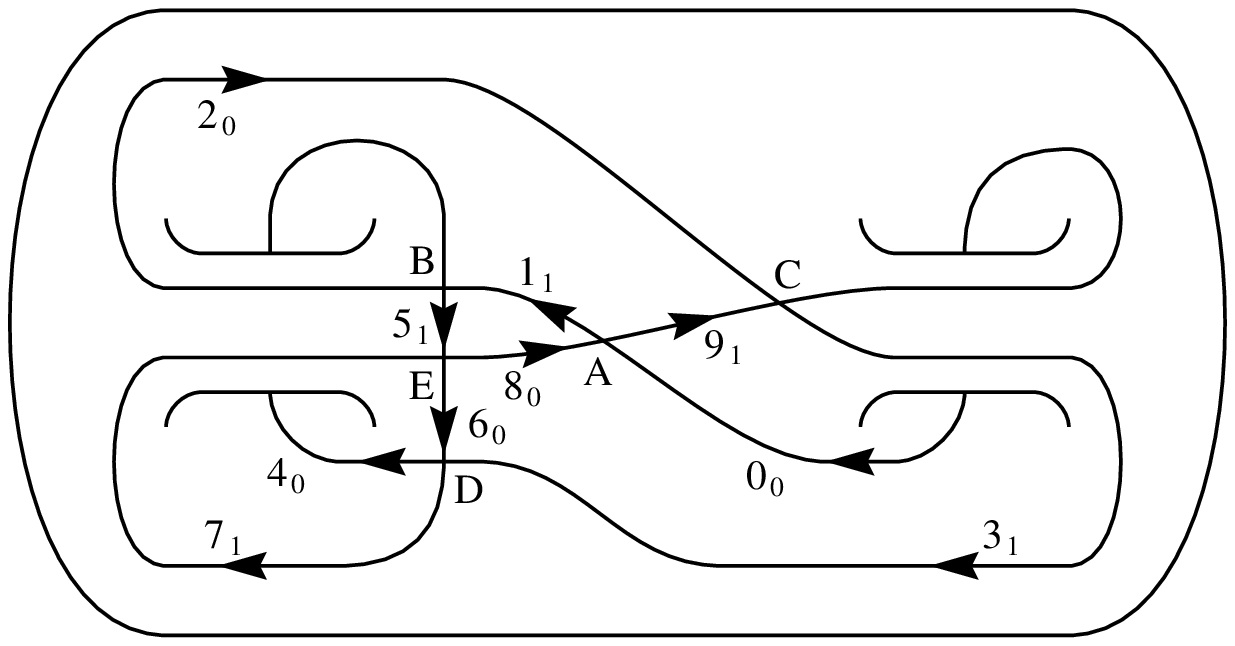}
\caption{A copy of Figure~\ref{fig:5xprecover} with edge label
 subscripts copied from the single component containing the edge $0_0$
 in Figure~\ref{fig:5xcover}.}
\label{fig:5xpullback}
\end{center}
\end{figure}
\par
By rotating a crossing we can orient it so that it looks like the
crossing on the left of Figure~\ref{fig:virtualcrossings}.
We define left and right edges of a crossing with respect to this
orientation.
\par
A necessary and sufficient condition for the component to
cross itself at a particular crossing in the new diagram is that the
edge label subscripts of the two right hand edges of the corresponding
crossing in the original diagram are the same (or, equivalently,
that the edge label subscripts of the two right left edges are the same).
This can be seen by considering Figure~\ref{fig:planelabel}.
If this condition is not met, the crossing arcs in the original diagram
correspond to arcs which go through different crossings in the new
diagram. These crossings are formed with arcs of a different
component or components.
\par
For example, in Figure~\ref{fig:5xpullback}, the crossings $B$, $C$ and
$D$ satisfy the condition that the subscript of both right hand edges are
the same.
In Figure~\ref{fig:5xcover} we can see that at each of the crossings
labelled $B_0$, $B_1$, $C_0$, $C_1$, $D_0$ and $D_1$, both arcs belong
to the same component.
On the other hand, the crossings $A$ and $E$ do not satisfy the
condition in Figure~\ref{fig:5xpullback}.
In Figure~\ref{fig:5xcover}, at each of the crossings $A_0$, $A_1$, $E_0$
and $E_1$ the two arcs belong to different components.
\par
For a given crossing $X$ in the original diagram, we define $\delta(X)$ to
be the subscript of the incoming right edge minus the subscript of the
outgoing right edge, modulo $m$. 
Then $X$ becomes a crossing where a component crosses itself in the new
diagram if and only if $\delta(X)$ is $0$.
If we remove a small neighbourhood of $X$ from the original diagram, the
curve is split into two sections: the left section of the curve
which starts at the left hand outgoing edge of $X$ and ends at the left
hand incoming edge of $X$; and the right section which starts at the
right hand outgoing edge of $X$ and ends at the right hand incoming edge
of $X$.
We write $r(X)$ for the number of arcs
crossing the right section from right to left minus the number of arcs
crossing the right section from left to right.
Then $\delta(X)$ is also equivalent, modulo $m$, to $r(X)$.
Now note that $\n(X)$ is equal to $r(X)$.
This implies that for $X$ to
be a self-crossing crossing in the new diagram, $\n(X)$ must be equal to
$0$ modulo $m$. Now this condition is the same as the condition used in
the construction of the \textcover{m} of a virtual string. If we just
consider a single component of our new diagram and remove all other
components we have a new virtual string and by the above discussion it
should be clear that this is a diagram of the \textcover{m} of our
original virtual string.
\par
By a similar argument it can be seen that the \textcover{0} of a
nanoword corresponds to the infinite cyclic cover of the associated
canonical surface. 
\par
Returning to the example in Figure~\ref{fig:5xcover}, we consider the
component drawn by the dotted line. Starting from the edge labelled $0_0$ we
follow the curve noting only the labels of crossings where the component
crosses itself. The result is the Gauss word $B_1C_0D_1B_1D_1C_0$. The
type of all three letters is $a$. This gives a nanoword which is
isomorphic to $\nanoword{BCDBDC}{aaa}$ which is the \textcover{2} of
$\nanoword{ABCDBEDEAC}{baaaa}$, the nanoword we started with.
\par
\begin{figure}[hbt]
\begin{center}
\includegraphics{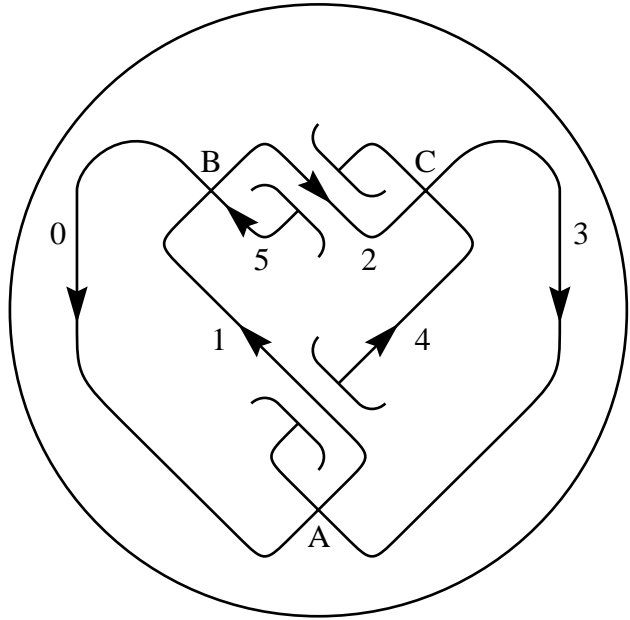}
\caption{A virtual string drawn on a closed genus 2 surface with edges labelled}
\label{fig:31precover}
\end{center}
\end{figure}
\begin{figure}[hbt]
\begin{center}
\includegraphics{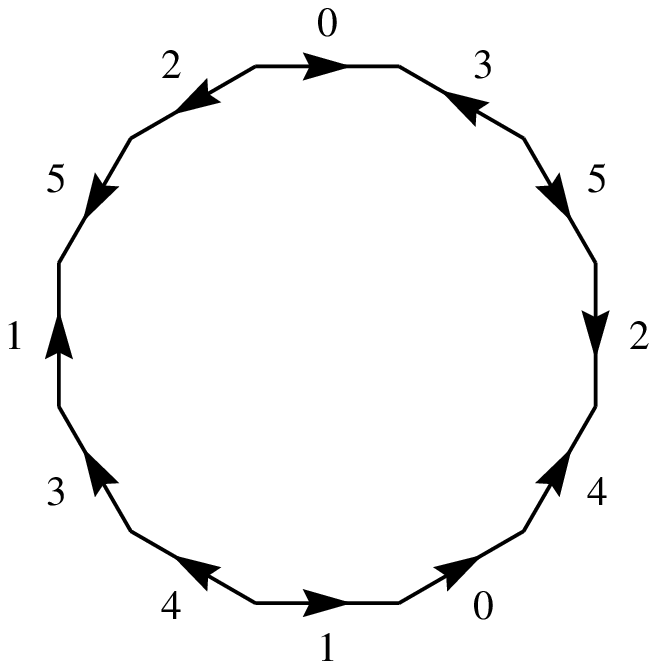}
\caption{A dodecagon with labelled oriented edges. Glueing edges with
 matching labels so that their orientations coincide gives the surface
 in figure \ref{fig:31precover}.}
\label{fig:31poly}
\end{center}
\end{figure}
\begin{figure}[hbt]
\begin{center}
\includegraphics{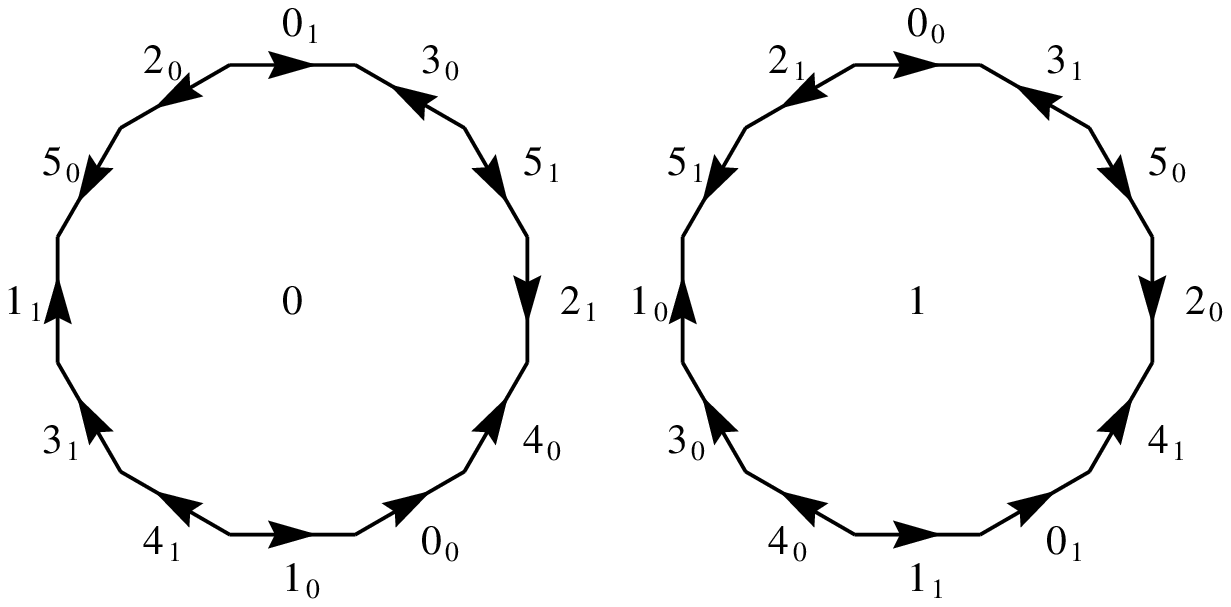}
\caption{Two copies of the dodecagon in Figure~\ref{fig:31poly} with
 edges relabelled}
\label{fig:2coverpoly}
\end{center}
\end{figure}
\begin{figure}[hbt]
\begin{center}
\includegraphics{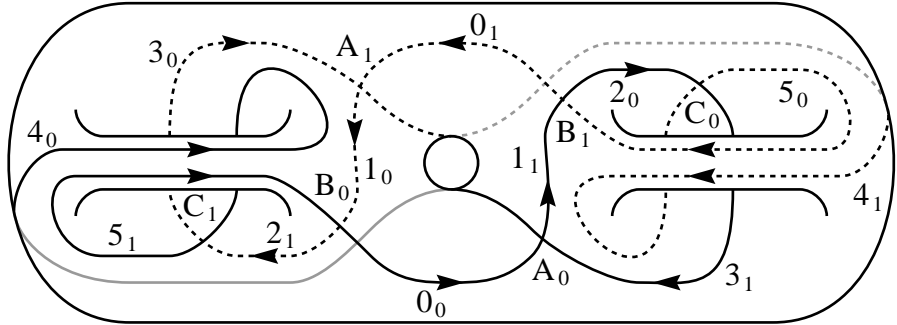}
\caption{The genus 3 surface constructed by glueing edges of the polygons in
 Figure~\ref{fig:2coverpoly}. Lines in light grey are on the underside
 of the surface. The glued edges form a 2 component virtual string. One
 component is drawn with a dashed line, the other with a solid line.}
\label{fig:2cover}
\end{center}
\end{figure}
\begin{figure}[hbt]
\begin{center}
\includegraphics{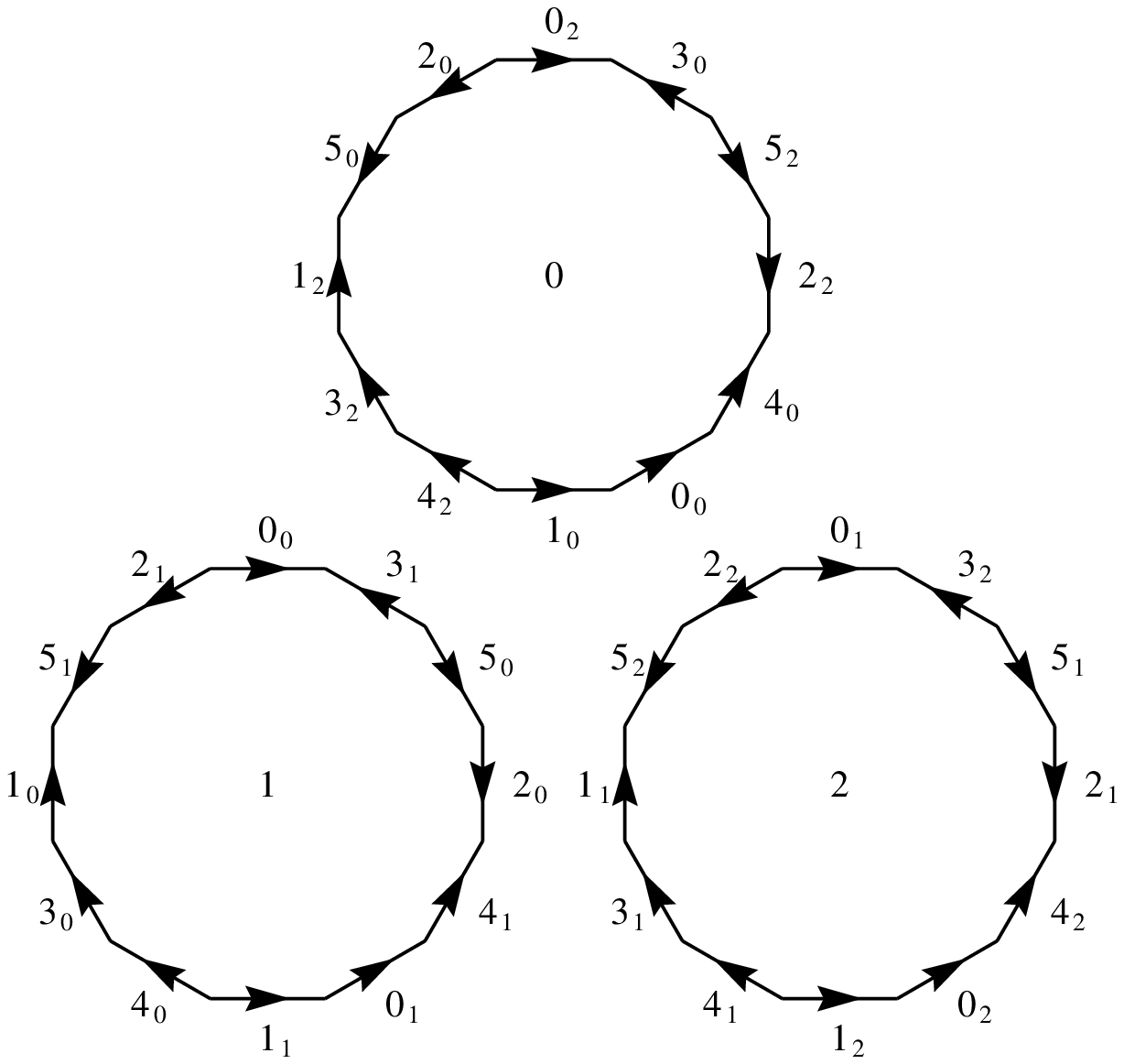}
\caption{Three copies of the dodecagon in Figure~\ref{fig:31poly} with
 edges relabelled}
\label{fig:3coverpoly}
\end{center}
\end{figure}
\begin{figure}[hbt]
\begin{center}
\includegraphics{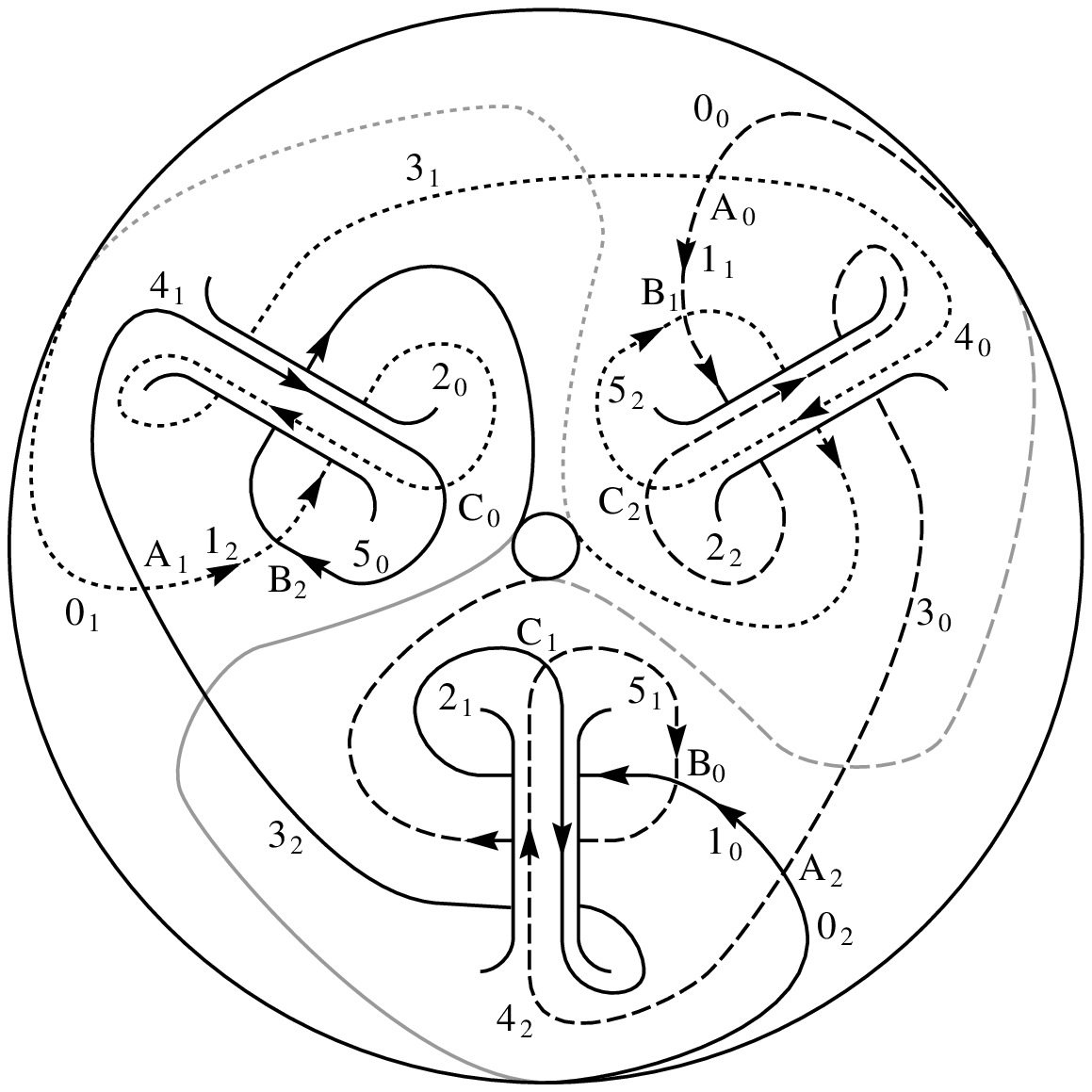}
\caption{The genus 4 surface constructed by glueing edges of the polygons in
 Figure~\ref{fig:3coverpoly}. Lines in light grey are on the underside
 of the surface. The glued edges form a 3 component virtual string. The
 components are distinguished by using dashed, dotted and solid lines.}
\label{fig:3cover}
\end{center}
\end{figure}
As further examples we construct the \textcover{2} and \textcover{3} of
the nanoword $\nanoword{ABCACB}{aaa}$ which is isomorphic to the
\textcover{2} in the previous example.
We have already calculated the coverings of this nanoword in Example
\ref{ex:simple_covering}. 
Figure~\ref{fig:31precover} gives a diagram of $\nanoword{ABCACB}{aaa}$
on a surface with the edges labelled. Figure~\ref{fig:31poly} shows the
result of cutting along the edges, in this case a single polygon. The
\textcover{2} is constructed from two copies of this polygon shown in
Figure~\ref{fig:2coverpoly}. Figure~\ref{fig:2cover} shows the result of
glueing. Similarly Figures~\ref{fig:3coverpoly} and \ref{fig:3cover}
show the \textcover{3} of $\nanoword{ABCACB}{aaa}$ before and after
glueing.
\section{Based matrices of composite nanowords}\label{sec:composite}
For two nanowords $\alpha$ and $\beta$,
it is possible to calculate the based matrix of the composite nanoword
$\alpha\beta$ from the based matrices of $\alpha$ and $\beta$ and the
types of the letters in $\alpha$ and $\beta$. In fact, in
\cite{Turaev:2004} Turaev gave the result of this calculation for graded
based matrices of open virtual strings (an open virtual string can be
defined as a virtual 
string with a fixed base point on the curve where no moves can involve
the base point). The statement of the result in Turaev's paper
contains an error, but once this is corrected, the result is the same
for the case of virtual strings. As Turaev did not give the calculation
in his paper, we give it here. 
 The main aim of this section is to prove
 Proposition~\ref{prop:compositeSameLetters} and
 Corollary~\ref{cor:primitive}.
\par
Using isomorphisms we can write $\alpha$ using letters
$W_1,W_2,\dotsc,W_m$ and $\beta$ using letters $X_1,X_2,\dotsc,X_n$. We
can write the based matrices of $\alpha$ and $\beta$ in the following form
\begin{equation*}
b_\alpha=
\begin{pmatrix}
0& -\transpose{\cvector{n_\alpha}} \\
\cvector{n_\alpha}& B_\alpha
\end{pmatrix},
b_\beta=
\begin{pmatrix}
0& -\transpose{\cvector{n_\beta}} \\
\cvector{n_\beta}& B_\beta
\end{pmatrix}.
\end{equation*}
\begin{prop}[Turaev]\label{prop:composite}
Using the notation above, the based matrix for the composite nanoword $\alpha\beta$
 has the form
\begin{equation*}
b_{\alpha\beta}=
\begin{pmatrix}
0& -\transpose{\cvector{n_\alpha}} & -\transpose{\cvector{n_\beta}}\\
\cvector{n_\alpha}& B_\alpha & D \\
\cvector{n_\beta}& -\transpose{D} & B_\beta
\end{pmatrix}
\end{equation*}
where the entries of $D$ are given by
\begin{equation*}
b_{\alpha\beta}(W_i,X_j) = 
\begin{cases}
0 & \text{if $\xType{W_i} = \xType{X_j} = a$ } \\
-n_\beta(X_j) & \text{if $\xType{W_i} = b, \xType{X_j} = a$ } \\
n_\alpha(W_i) & \text{if $\xType{W_i} = a, \xType{X_j} = b$ } \\
n_\alpha(W_i)-n_\beta(X_j) & \text{if $\xType{W_i} = \xType{X_j} = b$. }
\end{cases}
\end{equation*}
\end{prop}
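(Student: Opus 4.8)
The plan is to reduce the whole statement to the matrix identity $B_{\alpha\beta}=T-H+T\transpose{H}-H\transpose{T}$ for the interior block of a based matrix (Section~\ref{sec:based_matrices}), applied with $T=T(\alpha\beta)$ and $H=H(\alpha\beta)$. The border of $b_{\alpha\beta}$ is immediate: since $b(g,s)=\n(g)$ and, as noted after \eqref{eqn:u-poly_concatenation}, the value $\n(X)$ of a letter is unchanged on passing from $\alpha$ or $\beta$ to $\alpha\beta$, the $s$-column consists of $0$ followed by $\cvector{n_\alpha}$ and then $\cvector{n_\beta}$, and by skew-symmetry the $s$-row is its negative transpose; this is exactly the border displayed. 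So the real content lies in the interior submatrix, with the letters ordered $W_1,\dots,W_m,X_1,\dots,X_n$.

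The crucial step is a lemma giving $T(\alpha\beta)$ and $H(\alpha\beta)$ in block form. In the arrow diagram of $\alpha\beta$ the two occurrences of each $W_i$ form an initial segment (the $\alpha$-word) and those of each $X_j$ a terminal segment (the $\beta$-word). When we run the traversal defining $t(X,Y)$ and $h(X,Y)$, a type-$a$ letter's arrow points rightward, so for $W_i$ of type $a$ the traversal stays inside the $\alpha$-word and meets no $X_j$ at all; for $W_i$ of type $b$ the arrow points leftward, so the traversal wraps once past the end of the word, sweeping the \emph{entire} $\beta$-word (hence both occurrences of every $X_j$) together with exactly the same $W$-letters as the corresponding traversal in $\alpha$ alone. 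The symmetric statement holds for each $X_j$ against the $W$-letters, and in every case the $W$--$W$ part of the matrices agrees with $T(\alpha),H(\alpha)$ and the $X$--$X$ part with $T(\beta),H(\beta)$. Writing $\epsilon(Z)=1$ when $\xType{Z}=b$ and $\epsilon(Z)=0$ when $\xType{Z}=a$, this yields
\begin{equation*}
T(\alpha\beta)=\begin{pmatrix} T(\alpha) & P \\ Q & T(\beta) \end{pmatrix},\qquad
H(\alpha\beta)=\begin{pmatrix} H(\alpha) & P \\ Q & H(\beta) \end{pmatrix},
\end{equation*}
where $P_{ij}=\epsilon(W_i)$ and $Q_{ji}=\epsilon(X_j)$; note that the \emph{same} blocks $P,Q$ appear in $T$ and in $H$, which in passing re-proves $\link{W_i}{X_j}=0$. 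Carrying out this case analysis correctly — keeping straight which occurrence is head and which is tail for each type, and precisely which letters the wrap-around sweep covers — is the one genuinely delicate point; everything after it is formal.

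Finally I would substitute into $B_{\alpha\beta}=(T-H)+(T\transpose{H}-H\transpose{T})$. Since $T(\alpha\beta)-H(\alpha\beta)$ is block-diagonal, the $(T-H)$ term contributes only to the diagonal blocks. In $T\transpose{H}-H\transpose{T}$ the upper-left block picks up $P\transpose{P}$ from each of $T\transpose{H}$ and $H\transpose{T}$, which cancel, leaving $B_\alpha$; similarly the $Q\transpose{Q}$ terms cancel in the lower-right block, leaving $B_\beta$. The upper-right block becomes $D=(T(\alpha)-H(\alpha))\transpose{Q}-P\transpose{(T(\beta)-H(\beta))}$ and the lower-left its negative transpose, matching the claimed shape. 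To read off $D$: the matrix $T(\alpha)-H(\alpha)$ is the skew-symmetric linking matrix of $\alpha$, so its $i$-th row sums to $n_\alpha(W_i)$; since the $j$-th column of $\transpose{Q}$ is constant with value $\epsilon(X_j)$, the $(i,j)$ entry of $(T(\alpha)-H(\alpha))\transpose{Q}$ is $\epsilon(X_j)\,n_\alpha(W_i)$, and symmetrically the $(i,j)$ entry of $P\transpose{(T(\beta)-H(\beta))}$ is $\epsilon(W_i)\,n_\beta(X_j)$. Hence $D_{ij}=\epsilon(X_j)\,n_\alpha(W_i)-\epsilon(W_i)\,n_\beta(X_j)$, and splitting this according to whether $\xType{W_i}$ and $\xType{X_j}$ are $a$ or $b$ reproduces the four-case formula in the statement.
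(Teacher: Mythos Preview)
Your argument is correct and follows essentially the same route as the paper: both identify that the cross values $t(W_i,X_j)=h(W_i,X_j)=\epsilon(W_i)$ and $t(X_j,W_i)=h(X_j,W_i)=\epsilon(X_j)$, and then feed this into the formula $B=T-H+T\transpose{H}-H\transpose{T}$ (the paper uses the entrywise form \eqref{eqn:matrix_elements}, you use the block-matrix form). Your block-matrix packaging is tidier, but the underlying computation is the same.
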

\begin{proof}
As the letters $W_i$ and $X_j$ do not link in $\alpha\beta$ for all $i$ and all
 $j$, it is clear that $\n_{\alpha\beta}(W_i)$ equals $\n_\alpha(W_i)$ and
 $\n_{\alpha\beta}(X_j)$ equals $\n_\beta(X_j)$ for all $i$ and all $j$.
\par
We now consider $b_{\alpha\beta}(W_i,W_j)$. We use
 \eqref{eqn:matrix_elements} and the fact that for all $k$ and $l$,
 $h(W_k,X_l) = t(W_k,X_l)$. We have
\begin{align*}
b_{\alpha\beta}(W_i,W_j) =& t(W_i,W_j) - h(W_i,W_j) + 
\sum_{k \in G-\lbrace s\rbrace}t(W_i,k)h(W_j,k) - h(W_i,k)t(W_j,k) \\
= & t(W_i,W_j) - h(W_i,W_j) + \\
& \sum_{k}\left( t(W_i,W_k)h(W_j,W_k) - h(W_i,W_k)t(W_j,W_k) \right) + \\
& \sum_{k}\left( t(W_i,X_k)h(W_j,X_k) - h(W_i,X_k)t(W_j,X_k) \right) \\
= & b_\alpha(W_i,W_j) +
\sum_{k}\left( t(W_i,X_k)t(W_j,X_k) - t(W_i,X_k)t(W_j,X_k) \right) \\
= & b_\alpha(W_i,W_j).
\end{align*}
We can calculate $b_{\alpha\beta}(X_i,X_j)$ in the same way.
\par
We now calculate $b_{\alpha\beta}(W_i,X_j)$. We have
\begin{align*}
b_{\alpha\beta}(W_i,X_j) =& t(W_i,X_j) - h(W_i,X_j) + 
\sum_{k \in G-\lbrace s\rbrace}t(W_i,k)h(X_j,k) - h(W_i,k)t(X_j,k) \\
= &
\sum_{k}\left( t(W_i,W_k)h(X_j,W_k) - h(W_i,W_k)t(X_j,W_k) \right) + \\
& \sum_{k}\left( t(W_i,X_k)h(X_j,X_k) - h(W_i,X_k)t(X_j,X_k) \right).
\end{align*}
\par
Consider the first part.
If $\xType{X_j}$ is $a$ then $h(X_j,W_k) = t(X_j,W_k) = 0$ and 
\begin{equation*}
\sum_{k}\left( t(W_i,W_k)h(X_j,W_k) - h(W_i,W_k)t(X_j,W_k) \right) = 0.
\end{equation*}
If $\xType{X_j}$ is $b$ then $h(X_j,W_k) = t(X_j,W_k) = 1$ and 
\begin{align*}
\sum_{k}\left( t(W_i,W_k)h(X_j,W_k) - h(W_i,W_k)t(X_j,W_k) \right) & =
\sum_{k}\left( t(W_i,W_k) - h(W_i,W_k) \right) \\
& = \n(W_i).
\end{align*}
\par
Consider the second part.
If $\xType{W_i}$ is $a$ then $h(W_i,X_k) = t(W_i,X_k) = 0$ and 
\begin{equation*}
\sum_{k}\left( t(W_i,X_k)h(X_j,X_k) - h(W_i,X_k)t(X_j,X_k) \right) = 0.
\end{equation*}
If $\xType{W_i}$ is $b$ then $h(W_i,X_k) = t(W_i,X_k) = 1$ and 
\begin{align*}
\sum_{k}\left( t(W_i,X_k)h(X_j,X_k) - h(W_i,X_k)t(X_j,X_k) \right) & =
\sum_{k}\left( h(X_j,X_k) - t(X_j,X_k) \right) \\
& = -\n(X_j).
\end{align*}
\par
By combining the results of the calculations of the two parts the proof
 is complete.
\end{proof}
We now make some observations. If $Y$ is an annihilating element in
$M(\alpha\beta)$ then $Y$
must be an annihilating element in $M(\alpha)$ or in $M(\beta)$.
If $Y$ is a core element in $M(\alpha\beta)$ then $Y$
must be a core element in $M(\alpha)$ or in $M(\beta)$.
If $Y$ and $Z$ are complementary elements in $M(\alpha\beta)$ we have two
possibilities. The first case is that $Y$ and $Z$ both come from the same
component, say $\alpha$. In this case $Y$ and $Z$ are complementary elements
in $M(\alpha)$. The second case is that $Y$ and $Z$ come from different
components, say $Y$ comes from $\alpha$ and $Z$ comes from $\beta$. As $Y$ and
$Z$ are complementary we have $\n(Y)+\n(Z)=0$ and for all letters $K$ in
$\alpha\beta$
\begin{equation*}
b_{\alpha\beta}(Y,K) + b_{\alpha\beta}(Z,K) = b_{\alpha\beta}(s,K) = -\n(K).
\end{equation*}
Then $b_{\alpha\beta}(Y,Z)$ is $-\n(Z)$. However, by
Proposition~\ref{prop:composite} we have 
\begin{equation*}
b_{\alpha\beta}(Y,Z) =
\begin{cases}
0 &\text{if $\xType{Y} = \xType{Z} = a$} \\
-\n(Z) &\text{if $\xType{Y} = b, \xType{Z} = a$} \\
\n(Y) &\text{if $\xType{Y} = a, \xType{Z} = b$} \\
\n(Y)-\n(Z) &\text{if $\xType{Y} = \xType{Z} = b$.}
\end{cases}
\end{equation*}
\par
We now assume that $\xType{Z}=\xType{Y}$. Then from the above calculation
we have $\n(Y)=\n(Z)=0$. Furthermore, for $W$ in $\alpha$ we have
\begin{equation*}
b_{\alpha\beta}(Y,W) = -b_{\alpha\beta}(Z,W) - \n(W).
\end{equation*}
Using Proposition~\ref{prop:composite} to evaluate $b_{\alpha\beta}(Z,W)$ we get
\begin{equation*}
b_{\alpha\beta}(Y,W) =
\begin{cases}
-\n(W) &\text{if $\xType{Z} = \xType{W} = a$} \\
-\n(W) &\text{if $\xType{Z} = a, \xType{W} = b$} \\
0 &\text{if $\xType{Z} = b, \xType{W} = a$} \\
0 &\text{if $\xType{Z} = \xType{W} = b$.}
\end{cases}
\end{equation*}
Note that the value of $b_{\alpha\beta}(Y,W)$ does not depend on the type of
$W$. Similarly, for a letter $X$ in $x$, we can calculate
$b_{\alpha\beta}(Z,X)$. We have
\begin{equation*}
b_{\alpha\beta}(Z,X) = -b_{\alpha\beta}(Y,X) - \n(X)
\end{equation*}
and thus
\begin{equation*}
b_{\alpha\beta}(Z,X) = 
\begin{cases}
-\n(X) &\text{if $\xType{Y} = \xType{X} = a$} \\
-\n(X) &\text{if $\xType{Y} = a, \xType{W} = b$} \\
0 &\text{if $\xType{Y} = b, \xType{X} = a$} \\
0 &\text{if $\xType{Y} = \xType{X} = b$.}
\end{cases}
\end{equation*}
Again, the value of $b_{\alpha\beta}(Z,X)$ does not depend on the type of $X$.
\par
Thus if $Y$ in $\alpha$ and $Z$ in $\beta$ are complementary and both letters
are type $a$, then $Y$ is a core element in $M(\alpha)$ and $Z$ is a
core element in $M(\beta)$. If both letters are type $b$ then $Y$ is an
annihilating element in $M(\alpha)$ and $Z$ is an annihilating element
in $M(\beta)$.
\begin{prop}\label{prop:compositeSameLetters}
If the letters in $\alpha$ and $\beta$ all have the same type then
$\rho(\alpha\beta) \geq \rho(\alpha) + \rho(\beta)$.
\end{prop}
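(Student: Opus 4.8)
The plan is to watch how the based matrix $M(\alpha\beta)$ behaves under the three reducing operations and to run, in parallel, matching reductions of $M(\alpha)$ and of $M(\beta)$. First I would specialize Proposition~\ref{prop:composite} to the hypothesis. If every letter of $\alpha$ and $\beta$ has type $a$ then the block $D$ vanishes, so $M(\alpha\beta)$ is the block sum of $M(\alpha)$ and $M(\beta)$ amalgamated along the special element $s$; if every letter has type $b$ then the off-diagonal block is $D_{ij}=\n_\alpha(W_i)-\n_\beta(X_j)$. In either case $M(\alpha\beta)$ is completely determined by $M(\alpha)$, $M(\beta)$ and the integers $\n(\cdot)$ (equivalently, by the first row). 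The point I want to stress at the outset is that this special shape is \emph{stable under reduction}: a reducing operation only restricts a based matrix to a subset of its elements, so the matrix obtained after any sequence of reductions of $M(\alpha\beta)$ is again the $s$-amalgamated block sum of a partial reduction of $M(\alpha)$ and a partial reduction of $M(\beta)$, with the same off-diagonal rule (and with the first-row entries of the surviving elements unchanged).

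Next I would reduce $M(\alpha\beta)$ to its primitive form $P(\alpha\beta)$ one operation at a time, maintaining the invariant that the current matrix $M_i$ is the $s$-amalgamated block sum of a partial reduction $N^\alpha_i$ of $M(\alpha)$ and a partial reduction $N^\beta_i$ of $M(\beta)$, whose non-special elements are exactly the letters of $\alpha$, respectively $\beta$, still present in $M_i$. Removing an annihilating (resp.\ core) element of $M_i$ coming from the $\alpha$-side is, by the block structure, the removal of an annihilating (resp.\ core) element of $N^\alpha_i$, and similarly for the $\beta$-side; a complementary pair of $M_i$ lying entirely in the $\alpha$-side is a complementary pair of $N^\alpha_i$. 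In each of these cases the same number of elements is deleted on both sides of the correspondence and the invariant is restored.

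The crux is a complementary pair of $M_i$ consisting of a letter $Y$ from the $\alpha$-side and a letter $Z$ from the $\beta$-side. Here I would reuse the computation already made for $M(\alpha\beta)$ in the discussion following Proposition~\ref{prop:composite}: from $b(s,K)=-\n(K)$, the off-diagonal rule, $b(Z,Z)=0$ and the common type of all letters one deduces $\n(Y)=\n(Z)=0$ and then that $Y$ is a core element of $N^\alpha_i$ and $Z$ a core element of $N^\beta_i$ when the common type is $a$, and that $Y$ is an annihilating element of $N^\alpha_i$ and $Z$ an annihilating element of $N^\beta_i$ when it is $b$. The computation uses only the block form of $M_i$, which persists under reduction, so it applies verbatim to $M_i$ and not merely to $M(\alpha\beta)$. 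Thus such a pair lets us delete $Y$ from $N^\alpha_i$ and $Z$ from $N^\beta_i$ — two elements in total, matching the two removed from $M_i$ — and the invariant is again restored. I expect this to be the only step that requires any real verification; everything else is bookkeeping.

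Finally I would count. When $M_i=P(\alpha\beta)$ is reached, the step-by-step matching shows that the total number of elements removed from $M(\alpha\beta)$ equals the number removed from $M(\alpha)$ plus the number removed from $M(\beta)$. Since $N^\alpha_i$ is obtained from $M(\alpha)$ by reductions it has at least $|P(\alpha)|=\rho(\alpha)+1$ elements, so at most $\rank(\alpha)-\rho(\alpha)$ letters of $\alpha$ were deleted; likewise at most $\rank(\beta)-\rho(\beta)$ letters of $\beta$. As $M(\alpha\beta)$ has $\rank(\alpha)+\rank(\beta)+1$ elements, we get $|P(\alpha\beta)|\ge(\rank(\alpha)+\rank(\beta)+1)-(\rank(\alpha)-\rho(\alpha))-(\rank(\beta)-\rho(\beta))=\rho(\alpha)+\rho(\beta)+1$, that is, $\rho(\alpha\beta)\ge\rho(\alpha)+\rho(\beta)$.
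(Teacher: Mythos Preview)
Your proof is correct and follows the same strategy as the paper's: both reduce $M(\alpha\beta)$ to its primitive form and use the observations following Proposition~\ref{prop:composite} to show that each removed element corresponds to a removable element of $M(\alpha)$ or $M(\beta)$. The paper's proof is a two-line sketch relying on the phrase ``from the above argument''; you have made explicit the invariant that every intermediate reduction of $M(\alpha\beta)$ retains the block form determined by partial reductions $N^\alpha_i$ and $N^\beta_i$, so that the cross-component complementary case can be analysed at each step and not only in $M(\alpha\beta)$ itself. This is a genuine strengthening of the exposition rather than a different idea: the paper implicitly assumes this stability, whereas you verify it, and you also spell out the final count via the lower bound $|N^\alpha_i|\ge|P(\alpha)|$ coming from uniqueness of the primitive based matrix.
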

\begin{proof}
To calculate $\rho(\alpha\beta)$ we start with $M(\alpha\beta)$ and
 remove elements until we get to the primitive based matrix
 $P(\alpha\beta)$. From the above argument, any elements which can be
 removed from $M(\alpha\beta)$ correspond to elements that can be
 removed from $M(\alpha)$ or $M(\beta)$. Thus the result follows. 
\end{proof}
\begin{cor}\label{cor:primitive}
If the letters in $\alpha$ and $\beta$ all have the same type and
$M(\alpha)$ and $M(\beta)$ are primitive then
$\rho(\alpha\beta) = \rho(\alpha) + \rho(\beta)$.
\end{cor}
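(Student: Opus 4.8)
The plan is to pair Proposition~\ref{prop:compositeSameLetters}, which under the same-type hypothesis already gives $\rho(\alpha\beta) \geq \rho(\alpha) + \rho(\beta)$, with a matching upper bound that uses the primitivity of $M(\alpha)$ and $M(\beta)$.

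First I would record a general fact: for any nanoword $\gamma$ one has $\rho(\gamma) \leq \rank(\gamma)$, with equality when $M(\gamma)$ is primitive. Indeed the set underlying $M(\gamma)$ has exactly $\rank(\gamma)+1$ elements, Turaev's reducing operations only ever delete elements, and $s$ is never deleted, so $P(\gamma)$ has between $1$ and $\rank(\gamma)+1$ elements and hence $\rho(\gamma) \leq \rank(\gamma)$; if $M(\gamma)$ admits no reduction then $P(\gamma)=M(\gamma)$ and $\rho(\gamma)=\rank(\gamma)$. In particular the hypothesis forces $\rho(\alpha)=\rank(\alpha)$ and $\rho(\beta)=\rank(\beta)$.

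Next I would observe that $\rank(\alpha\beta)=\rank(\alpha)+\rank(\beta)$, which is immediate from the definition of composition since the Gauss word of $\alpha\beta$ is the concatenation of the two Gauss words on disjoint sets of letters. Applying the general bound to $\alpha\beta$ gives $\rho(\alpha\beta)\leq\rank(\alpha\beta)=\rank(\alpha)+\rank(\beta)=\rho(\alpha)+\rho(\beta)$. Combined with Proposition~\ref{prop:compositeSameLetters}, this yields the desired equality.

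There is essentially no obstacle here: all the substantive work has already been done, in Proposition~\ref{prop:composite}, in the ensuing analysis of which letters of $M(\alpha\beta)$ can be annihilating, core, or complementary, and in Proposition~\ref{prop:compositeSameLetters}. The only mild point requiring care is the elementary remark that a primitive based matrix is its own primitive reduction, which is what converts the bound $\rho\leq\rank$ into an equality for $\alpha$ and for $\beta$, and hence lets the two inequalities close up.
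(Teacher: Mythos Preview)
Your proof is correct and follows essentially the same route as the paper: both rest on the analysis preceding Proposition~\ref{prop:compositeSameLetters} showing that any reducible element of $M(\alpha\beta)$ must arise from a reducible element of $M(\alpha)$ or $M(\beta)$. The paper phrases this as a direct argument that $M(\alpha\beta)$ itself is primitive, whereas you package the same content as the sandwich $\rho(\alpha)+\rho(\beta)\leq\rho(\alpha\beta)\leq\rank(\alpha\beta)=\rho(\alpha)+\rho(\beta)$; the underlying reasoning is identical.
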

\begin{proof}
If $M(\alpha\beta)$ is not primitive, then by the above argument there exists at
 least one reducible element in $M(\alpha)$ or $M(\beta)$ which would contradict
 the assumption that $M(\alpha)$ and $M(\beta)$ are primitive.
\end{proof}
\section{Cablings of virtual strings}\label{sec:cable}
We can define a cabling of a virtual string in a similar way to cablings
of knots.
We can construct the \textcable{n} of a virtual string $\Gamma$ from a
diagram $D$ of $\Gamma$ embedded without virtual crossings in a surface
$S$. We pick an arbitrary point on $D$ which is not a double point
and cut $D$ at that point to get an arc in $S$.
We then replace the arc with $n$
 parallel copies of the arc. We label these arcs $0$ to $n-1$ from left to
 right as we travel along the curve according to its orientation. From
 the orientation of the curve, each arc has a start and an end. We join
 the end of arc $i$ to the beginning of arc $i+1$ for $i$ going from $0$
 to $n-2$. Finally we join the end of arc $n-1$ to the beginning of arc
 $0$  by crossing the other $n-1$ arcs. We call the resulting
 diagram $\cable{D}{n}$ and describe it as the \textcable{n} of $D$.
\par
\begin{figure}[hbt]
\begin{center}
\includegraphics{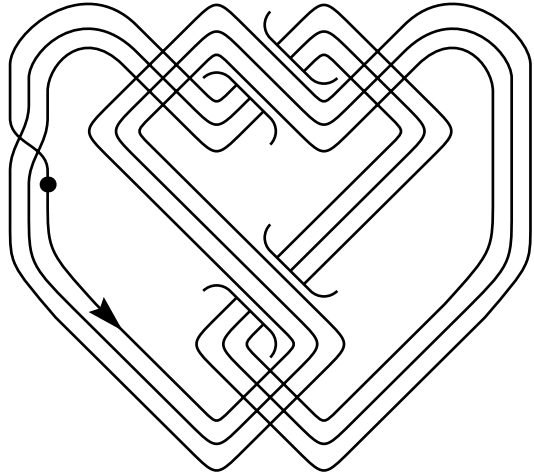}
\caption{The \textcable{3} of a virtual string}
\label{fig:cabling31}
\end{center}
\end{figure}
To illustrate the result of this process, Figure~\ref{fig:cabling31} 
 shows the \textcable{3} of the diagram in Figure~\ref{fig:diagram31}
 after the virtual crossings have been replaced by 
 handles. The point on the curve marked with a blob is where the $3$
 arcs have been joined to each other according to the description given
 above.
\par
Note that in this construction, each crossing in the original virtual
string diagram becomes $n^2$ crossings in the \textcable{n}. We also get
an additional $n-1$ crossings when we join up the arcs. Thus the number
of crossings in the \textcable{n} of a $k$ crossing virtual string
diagram is $kn^2 + n - 1$.
\par
We picked an arbitrary point on $D$ to join the arcs.
It is easy to check, using the flattened Reidemeister moves,
that we get a homotopic virtual string diagram, no matter which
 non-double point on $D$ we pick. It is also easy to check that for
 homotopically equivalent virtual string diagrams $D$ and $D^\prime$ the
 \textcable{n}s $\cable{D}{n}$ and $\cable{D^\prime}{n}$ are also
 homotopic. Thus, if we define the \textcable{n} of $\Gamma$ as the
 virtual string represented by $\cable{D}{n}$ for some diagram $D$
 representing $\Gamma$, the \textcable{n} of $\Gamma$ is well-defined
 and we write it $\cable{\Gamma}{n}$. In particular we note that
 invariants of cables of virtual strings can be used to distinguish
 virtual strings themselves.
\par
We remark that Turaev defined cables of virtual strings in
 \cite{Turaev:2004}. He defines cables as the result of ``Adams
 operations''. He noted that this is a well-defined operation on virtual
 strings. He also gave a relation between the $u$-polynomial of a
 virtual string and the $u$-polynomial of its \textcable{n}. We discuss
 this further below. 
\par
After we had written this section we discovered that Kadokami had
 defined cables of virtual strings in the same way as we have in
 \cite{Kadokami:Non-triviality}. He calls 
the result of the construction the $n$-parallelized projected virtual
knot diagram of $D$. Although Kadokami made this definition, he
did not examine any of the properties that we give here.
\par
We can define the \textcable{n} of a nanoword $\alpha$ in the following
 way.
We first construct a diagram $D$ corresponding to $\alpha$ on a surface
 $S$ such that $D$ has no virtual crossings.
Each crossing of $D$ is labelled with a letter from $\alpha$.
The nanoword $\alpha$ gives us a base
 point on $D$ which we use to construct $\cable{D}{n}$ as explained above. Using
 the base point we can then write a nanoword which represents
 $\cable{D}{n}$. We define this nanoword to be the \textcable{n} of
 $\alpha$.
\par
We examine this construction more closely.
We start by labelling the crossings of the \textcable{n} of $D$.
We have already labelled each arc $0$ to $n-1$ going from left to
right. The crossing points which are created at the points where we join
the $i$th arc to the $(i+1)$th arc are labelled $C_i$ (for $i$ running
from $0$ to $n-2$). For the $n\times n$ crossings derived
from a single crossing $A$ in $\alpha$ we adopt the following naming
scheme. 
We note that each crossing in $D$ has a type ($a$ or $b$) which is
 specified in $\alpha$.
If $A$ is of type $a$, we
simply label the crossings $A_{i,j}$ where the crossing is the
intersection of the $i$th arc coming from the left and the $j$th arc
coming from the right. 
If $A$ is of type $b$, we again label the crossings $A_{i,j}$. However
this time the crossing is the intersection of the $(i-1)$th arc
(calculating in $\cyclic{n}$) coming from the left and the $j$th arc coming
from the right. Figure~\ref{fig:cablelabel} shows how the arcs and
crossings are labelled for a \textcable{3}.
\begin{figure}[hbt]
\begin{center}
\includegraphics{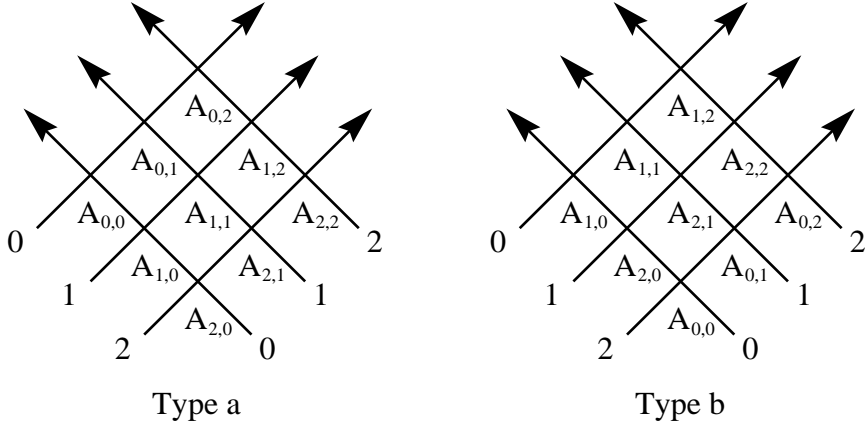}
\caption{Labelling crossings in a \textcable{3}}
\label{fig:cablelabel}
\end{center}
\end{figure}
\par
The types of the crossings $A_{i,j}$ in the cable can be determined
from $i$, $j$ and the type of $A$. If $A$ is of type $a$, $A_{i,j}$ is
of type $a$ if $i$ is less than or equal to $j$ and of type $b$ if $i$
is greater than $j$. If $A$ is of type $b$, writing $d$ for $i-1$
calculated in $\cyclic{n}$, $A_{i,j}$ is of type $a$ if $j$ is greater
than $d$ and of type $b$ if $j$ is less than or equal to $d$.
The crossings $C_i$ are all of type $a$. 
\par
Using this labelling scheme we can calculate the nanoword that represents
the cable of a virtual string directly from a nanoword $\alpha$ representing
the virtual string in a mechanical way. For an integer $i$ between $0$
and $n-1$ inclusive we define $w_i$ to be a copy of $\alpha$ with every
letter $A$ replaced by $n$ letters as follows. If $A$ has type $a$,
replace the first occurence of $A$ with
\begin{equation*}
A_{i,0}A_{i,1}\dotso A_{i,n-1}
\end{equation*}
and the second occurence of $A$ with
\begin{equation*}
A_{n-1,i}A_{n-2,i}\dotso A_{0,i}.
\end{equation*}
If $A$ has type $b$, replace the first occurence of $A$ with
\begin{equation*}
A_{0,i}A_{n-1,i}A_{n-2,i}\dotso A_{1,i}
\end{equation*}
and the second occurence of $A$ with
\begin{equation*}
A_{i+1,0}A_{i+1,1}\dotso A_{i+1,n-1}
\end{equation*}
where $i+1$ is calculated in $\cyclic{n}$.
Then the \textcable{n} of $\alpha$ is given by
\begin{equation*}
w_0C_0w_1C_1\dotso C_{n-3}w_{n-2}C_{n-2}w_{n-1}C_{n-2}C_{n-3}\dotso C_1C_0
\end{equation*}
where the types of the letters $A_{i,j}$ and letters $C_i$ are defined as above.
\par
In this way the cabling operation can be defined for nanowords
representing virtual strings without reference to diagrams. We denote
the \textcable{n} of a nanoword $\alpha$ by $\cable{\alpha}{n}$. We give an
example of calculating a \textcable{2} of a nanoword.
\par
\begin{ex}\label{ex:2cable}
Consider $\alpha$, the nanoword $\nanoword{XYXZYZ}{abb}$. Then $w_0$ is given by
\begin{equation*}
X_{0,0}X_{0,1}Y_{0,0}Y_{1,0}X_{1,0}X_{0,0}Z_{0,0}Z_{1,0}Y_{1,0}Y_{1,1}Z_{1,0}Z_{1,1},
\end{equation*}
$w_1$ is given by
\begin{equation*}
X_{1,0}X_{1,1}Y_{0,1}Y_{1,1}X_{1,1}X_{0,1}Z_{0,1}Z_{1,1}Y_{0,0}Y_{0,1}Z_{0,0}Z_{0,1}
\end{equation*}
and the \textcable{2} of $\alpha$, $\cable{\alpha}{2}$, is
 $w_0C_0w_1C_0$. The letters $X_{0,0}$, $X_{0,1}$, $X_{1,1}$, $Y_{1,1}$,
 $Z_{1,1}$ and $C_0$ are all of type $a$ and the remaining letters are
 of type $b$.
\end{ex}
\par
We can calculate the $u$-polynomial of a cable directly from the
$u$-polynomial of the original virtual string.
We note that in Section~5.4 of Turaev's paper \cite{Turaev:2004}, there
is a statement of the relationship between the $u$-polynomials of a
cable and the original virtual string. However there is an error in the
statement and no proof is given. We give the correct statement and
provide a proof here.
\begin{thm}\label{thm:cableUPoly}
For a virtual string $\Gamma$
the $u$-polynomial of the \textcable{n} of $\Gamma$ is given by
\begin{equation}\label{eqn:cableUPoly}
u_{\cable{\Gamma}{n}}(t) = n^2u_{\Gamma}(t^n).
\end{equation}
\end{thm}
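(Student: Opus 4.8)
The plan is to compute $\n(X)$ for every crossing $X$ of a nanoword $\cable{\alpha}{n}$ representing $\cable{\Gamma}{n}$ (where $\alpha$ represents $\Gamma$), and then to read off the $u$-polynomial from its definition. Recall that the crossings of $\cable{\alpha}{n}$ are of two sorts: the $n^2$ ``grid'' crossings $A_{i,j}$ coming from each letter $A$ of $\alpha$, and the $n-1$ ``join'' crossings $C_0,\dots,C_{n-2}$. I claim everything reduces to the two assertions
\begin{equation*}
\n(A_{i,j}) = n\,\n_\alpha(A)\ \text{ for all }i,j, \qquad\text{and}\qquad \n(C_\ell)=0\ \text{ for all }\ell .
\end{equation*}
Granting them, fix $k\ge1$: a crossing of $\cable{\alpha}{n}$ has $\n$-value $k$ exactly when it is some $A_{i,j}$ with $n\mid k$, say $k=nm$, and $\n_\alpha(A)=m$, and there are $n^2\,\sharp\{A\in\alpha:\n_\alpha(A)=m\}$ such crossings (the join crossings contribute nothing, since their $\n$-value is $0$). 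The same holds for $-k$, so $u_k(\cable{\alpha}{n})$ equals $n^2 u_m(\alpha)$ when $k=nm$ and equals $0$ when $n$ does not divide $k$; therefore
\begin{equation*}
u_{\cable{\Gamma}{n}}(t)=\sum_{k\ge1}u_k(\cable{\alpha}{n})\,t^k=\sum_{m\ge1}n^2 u_m(\alpha)\,t^{nm}=n^2 u_\Gamma(t^n) ,
\end{equation*}
which is \eqref{eqn:cableUPoly}.

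To prove the two assertions I would use the geometric interpretation of $\n$ recalled in Section~\ref{sec:based_matrices}. Represent $\Gamma$ by a diagram $D$ on a surface $S$ with no virtual crossings; then $\cable{D}{n}$ also lies in $S$, being $n$ parallel co-oriented copies of $D$ inside an annular neighbourhood, together with the cabling box inside a small disk about the base point that meets no double point of $D$. Because the cable is made of $n$ parallel copies of $D$, its homology class in $H_1(S)$ is $n[D]$; hence for every crossing $X$ of $\cable{D}{n}$ one has $\n(X)=[X_c]\cdot[\cable{D}{n}]=n\bigl([X_c]\cdot[D]\bigr)$, where $X_c$ is the loop traced from $X$ out the right outgoing arc and back to $X$. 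In particular every $\n(X)$ is divisible by $n$, which is the source of the substitution $t\mapsto t^n$.

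It remains to identify $[X_c]\cdot[D]$. For a join crossing $C_\ell$, the loop $(C_\ell)_c$ is traced inside the annular neighbourhood of $D$ and, since all double points of $D$ avoid the cabling disk, it passes only through complete parallel copies of $D$ between consecutive visits to that disk; thus $[(C_\ell)_c]$ is an integer multiple of $[D]$, and $\n(C_\ell)=0$ because $[D]\cdot[D]=0$. For a grid crossing $A_{i,j}$, the local picture at $A_{i,j}$ is a parallel co-oriented copy of the local picture at $A$, so leaving $A_{i,j}$ on its right outgoing arc shadows leaving $A$ on its right outgoing arc; tracing the cable back to $A_{i,j}$, the loop $(A_{i,j})_c$ runs through some number of complete copies of $D$ and exactly one ``partial'' passage, and that partial passage is a parallel copy of the loop $A_c$ used to define $\n_\alpha(A)$. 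Hence $[(A_{i,j})_c]=[A_c]+(\text{integer multiple of }[D])$ and $\n(A_{i,j})=n\bigl([A_c]\cdot[D]\bigr)=n\,\n_\alpha(A)$.

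The main obstacle is making the grid-crossing step airtight: one has to check that, as $(i,j)$ ranges over the $n\times n$ grid and as the type of $A$ varies, the partial passage appearing in $(A_{i,j})_c$ is always $A_c$ and never the complementary arc of $D$ at $A$ — the latter would contribute $-\n_\alpha(A)$ and destroy the count — i.e.\ that the matching of ``right outgoing arcs'' between $A$ and each $A_{i,j}$ is genuinely uniform. An equivalent purely combinatorial route, better suited to the explicit form $w_0C_0w_1C_1\cdots w_{n-1}C_{n-2}\cdots C_0$ of $\cable{\alpha}{n}$ together with the types of the $A_{i,j}$ and $C_\ell$ listed there, is to evaluate $\n(X)=\sum_Y l(X,Y)$ directly from the linking conventions of Section~\ref{sec:nanowords}; this needs no surfaces but does require careful bookkeeping of which occurrence of each letter lies between the two occurrences of $X$, with the $C_\ell$ case eased by first noting that the join crossings are pairwise unlinked.
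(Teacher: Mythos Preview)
Your proof is correct and follows essentially the same route as the paper: reduce to the two assertions $\n(A_{i,j})=n\,\n(A)$ and $\n(C_i)=0$, then identify the loop at each cable crossing homologically as the relevant loop in $D$ plus an integer multiple of $[D]$, and use $b_D(s,s)=[D]\cdot[D]=0$. The paper dispels the obstacle you flag by giving the explicit count: the loop at $A_{i,j}$ is $A_c$ together with exactly $j-i\pmod n$ parallel copies of $D$, and the loop at $C_i$ is $(n-1)-i$ copies of $D$.
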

\begin{proof}
Recall that the $u$-polynomial of a virtual string $\Gamma$ is given by
\begin{equation*}
u_{\Gamma}(t) = \sum_{k \geq 1}u_k(\Gamma)t^k.
\end{equation*}
Substituting this into \eqref{eqn:cableUPoly} and moving $n^2$
 inside the sum, we get
\begin{equation*}
u_{\cable{\Gamma}{n}}(t) = \sum_{k \geq 1}n^2u_k(\Gamma)t^{nk}.
\end{equation*}
We now prove this equation.
\par
We construct a diagram $D$ of $\Gamma$ in some surface $S$ so that $D$
 has no virtual crossings. We then construct $\cable{D}{n}$ the
 \textcable{n} of $D$.
\par
It is sufficient to prove the following two facts.
Firstly, for all $n^2$ crossings $A_{i,j}$ in
 $\cable{D}{n}$ coming from a crossing $A$ in $D$, $\n(A_{i,j})$ is
 equal to $n \n(A)$.
Secondly, for all $n-1$ crossings $C_i$ added by joining the arcs of the
 cable, we have $\n(C_i)=0$.
\par
To show both facts we recall that $\n(X)$ is the homological intersection
 number of the loop starting at $X$ and the curve $\cable{D}{n}$.
 The whole cable $\cable{D}{n}$, for the purposes of computing
 the homological intersection number, can be considered equivalent to
 $n$ parallel copies of the original curve $D$. Similarly,
 any loop in the cable can be considered to be equivalent to zero or
 more parallel copies of $D$ and possibly a loop in $\alpha$ starting at a
 crossing in $\alpha$.
\par
We first consider a crossing $A_{i,j}$ derived from a crossing $A$ in
 $D$. The loop at $A_{i,j}$ is equivalent to the loop at $A$ in $D$ and 
 $p$ parallel copies of $D$. Here $p$ is equal to $j-i$, calculating in
 $\cyclic{n}$. Then
\begin{equation*}
\n(A_{i,j}) = n \left(p\;b_D(s,s) + b_D(A,s) \right).
\end{equation*}
As $b_D(s,s) = 0$ and $b_D(A,s)=\n(A)$, we have 
\begin{equation*}
\n(A_{i,j}) = n \n(A).
\end{equation*}
We now consider a crossing $C_i$. In this case the loop at $C_i$ is
 equivalent to $(n-1)-i$ parallel copies of $D$. In this case we have
\begin{align*}
\n(C_i) = & n \left(\left( (n-1) - i \right) b_D(s,s) \right) \\
 = & 0 
\end{align*}
and the proof is complete.
\end{proof}
\begin{cor}
If $\Gamma$ is a virtual string with non-trivial $u$-polynomial, the
 family of virtual strings $\{\Gamma,\cable{\Gamma}{n+1}|n \in \Zpos\}$ are
 all mutually homotopically distinct.
\end{cor}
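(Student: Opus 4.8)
The plan is to reduce everything to the $u$-polynomial, using Theorem~\ref{thm:cableUPoly} together with the fact that homotopic virtual strings have equal $u$-polynomials. It therefore suffices to prove that the polynomials $u_{\cable{\Gamma}{n}}(t)$ are pairwise distinct as $n$ ranges over $\Zpos$, where I adopt the convention $\cable{\Gamma}{1}=\Gamma$; note that the identity $u_{\cable{\Gamma}{n}}(t)=n^2 u_{\Gamma}(t^n)$ holds trivially for $n=1$ and holds for $n\geq 2$ by Theorem~\ref{thm:cableUPoly}.

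First I would record that, by hypothesis, $u_{\Gamma}(t)$ is a nonzero polynomial, and that $u_{\Gamma}(0)=0$ since every $u$-polynomial satisfies $u(0)=0$ (this is part of Turaev's realization result, Theorem~3.4.1 of \cite{Turaev:2004}, and also follows directly from the definition, as $\n(X)$ must be nonzero to contribute to $u_k$ for some $k\geq 1$). Consequently $D:=\deg u_{\Gamma}$ satisfies $D\geq 1$.

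Next, substituting $t\mapsto t^n$ multiplies every exponent by $n$, so $n^2 u_{\Gamma}(t^n)$ is a polynomial of degree $nD$ (its leading coefficient is $n^2$ times that of $u_{\Gamma}$, hence nonzero). Thus $\deg u_{\cable{\Gamma}{n}}=nD$. Since $D\geq 1$, the map $n\mapsto nD$ is strictly increasing on $\Zpos$, so these degrees are pairwise distinct, whence the polynomials $u_{\cable{\Gamma}{n}}(t)$ are pairwise distinct. Because the $u$-polynomial is a homotopy invariant, the virtual strings $\Gamma,\cable{\Gamma}{2},\cable{\Gamma}{3},\dotsc$ are mutually homotopically distinct, which is the claim.

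There is no substantial obstacle here; the only point needing a moment of care is the strict inequality $D\geq 1$, i.e.\ that a nontrivial $u$-polynomial is not a nonzero constant — this is precisely where $u_{\Gamma}(0)=0$ enters. An equivalent route, if one prefers not to invoke the realization theorem, is to compare lowest-degree terms instead of top-degree terms: the least exponent occurring in $n^2 u_{\Gamma}(t^n)$ is $n$ times the least exponent occurring in $u_{\Gamma}$, and the latter is at least $1$ since $u_{\Gamma}$ has no constant term; this again forces distinctness.
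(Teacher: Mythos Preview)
Your proof is correct and follows essentially the same approach as the paper: both use Theorem~\ref{thm:cableUPoly} to conclude that $\deg u_{\cable{\Gamma}{n}} = n\cdot\deg u_{\Gamma}$, and then observe that these degrees are pairwise distinct since a nonzero $u$-polynomial has positive degree. You are slightly more explicit in justifying $D\geq 1$, but this is immediate from the definition $u_{\alpha}(t)=\sum_{k\geq 1}u_k(\alpha)t^k$, so the extra discussion (invoking the realization theorem or lowest-degree terms) is not needed.
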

\begin{proof}
As $\Gamma$ has a non-trivial $u$-polynomial, the degree of the
 $u$-polynomial is $k$ for some non-zero positive integer $k$. By the
 theorem, the degree of the $u$-polynomial of $\cable{\Gamma}{n}$ is
 $nk$. Thus the $u$-polynomials of the family are all different and so
 the virtual strings themselves must all be homotopically distinct. 
\end{proof}
As we can consider both cablings and coverings as maps from the set of
virtual strings into itself, it makes sense to consider their composition.
Using our notation we can write the
\textcover{r} of the \textcable{n} of a virtual string $\Gamma$ as
 $\cover{(\cable{\Gamma}{n})}{r}$ and the \textcable{n} of the \textcover{r}
 of $\Gamma$ as $\cable{(\cover{\Gamma}{r})}{n}$. In general these are not
 necessarily the same. An example is sufficient to show this.
\par
\begin{ex}
We consider the virtual string $\Gamma$ represented by the nanoword
 $\nanoword{XYXZYZ}{abb}$. We consider the case where $n$ and $r$ are both
 $2$. 
Then $\cover{\Gamma}{2}$ is represented by $\nanoword{XX}{a}$ which is homotopically
 trivial. Thus $\cable{(\cover{\Gamma}{2})}{2}$ is also trivial. On the other
 hand, we calculated $\cable{\Gamma}{2}$ in Example \ref{ex:2cable}.
 It is easy to check (or see the discussion below) that this nanoword is
 fixed under the \textcover{2} map. That is,
 $\cover{(\cable{\Gamma}{2})}{2}$ is equal to $\cable{\Gamma}{2}$. As the
 $u$-polynomial for $\Gamma$ is $t^2-2t$, by
 Theorem~\ref{thm:cableUPoly}, the $u$-polynomial for
 $\cable{\Gamma}{2}$ is 
 $4t^4-8t^2$. Thus $\cable{\Gamma}{2}$ is non-trivial and
 $\cover{(\cable{\Gamma}{2})}{2}$ is not homotopic to
 $\cable{(\cover{\Gamma}{2})}{2}$.
\end{ex}
However, we do have a relationship between cablings of coverings and
coverings of cables.
\begin{thm}
For any virtual string $\Gamma$, any positive integer $n$ and any
 non-negative integer $r$, 
$\cable{(\cover{\Gamma}{k})}{n}$ and $\cover{(\cable{\Gamma}{n})}{r}$
 are homotopic. Here $k$ is zero if $r$ is zero and $k$ is
 $r/d$ otherwise, where $d$ is the greatest common divisor of $n$ and
 $r$. 
\end{thm}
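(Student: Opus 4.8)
The plan is to carry out the whole argument at the level of a fixed nanoword, using the completely explicit descriptions of cabling and of covering together with one elementary divisibility fact. Fix a nanoword $\alpha$ representing $\Gamma$. Then $\cable{\alpha}{n}$ represents $\cable{\Gamma}{n}$ and $\cover{\alpha}{k}$ represents $\cover{\Gamma}{k}$, so it suffices to show that the two nanowords $\cover{(\cable{\alpha}{n})}{r}$ and $\cable{(\cover{\alpha}{k})}{n}$ are isomorphic, hence homotopic, hence represent the same virtual string.

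First I would recall from the proof of Theorem~\ref{thm:cableUPoly} that in $\cable{\alpha}{n}$ we have $\n(A_{i,j}) = n\,\n(A)$ for every crossing $A$ of $\alpha$ and all $i,j$, and $\n(C_i)=0$ for the $n-1$ join crossings. Therefore forming the \textcover{r} of $\cable{\alpha}{n}$ retains every $C_i$ and, for each letter $A$ of $\alpha$, either retains all $n^2$ letters $A_{i,j}$ (when $n\,\n(A)$ is divisible by $r$) or deletes all of them (otherwise). This divisibility condition is exactly where $k$ enters: when $r=0$ it says $\n(A)=0$, and when $r\neq 0$, writing $d=\gcd(n,r)$ and $k=r/d$, it says $\n(A)$ is divisible by $k$, since $n/d$ and $k$ are coprime. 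In both cases the block $\{A_{i,j}\}$ survives in $\cover{(\cable{\alpha}{n})}{r}$ precisely when $A$ survives in $\cover{\alpha}{k}$.

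It then remains to identify the resulting nanoword with $\cable{(\cover{\alpha}{k})}{n}$. Here I would use that in the explicit formula for a cable the word is built from blocks $w_0,w_1,\dotsc,w_{n-1}$ interleaved and closed off by the crossings $C_i$, where each $w_i$ is a copy of the underlying nanoword in which each letter $A$ is replaced by a string of $n$ letters depending only on $A$, on $i$, on $n$ and on the type of $A$; the types of the $A_{i,j}$ and of the $C_i$ are likewise determined by this same local data. Consequently, deleting from $\cable{\alpha}{n}$ all blocks $\{A_{i,j}\}$ with $A\notin\cover{\alpha}{k}$ turns each $w_i$ into exactly the $i$-th block of $\cable{(\cover{\alpha}{k})}{n}$ while leaving the frame of $C_i$'s untouched. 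Since $\cover{\alpha}{k}$ is obtained from $\alpha$ by deleting precisely the letters $A$ with $A\notin\cover{\alpha}{k}$, this shows that $\cover{(\cable{\alpha}{n})}{r}$ and $\cable{(\cover{\alpha}{k})}{n}$ are isomorphic, which gives the homotopy asserted in the theorem.

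The step I expect to demand the most care is this last one: checking that the cabling construction is ``local'' enough that deleting some letters from $\alpha$ and then cabling produces, on the nose, the nanoword obtained by cabling $\alpha$ and then deleting the corresponding blocks — in particular that the base-point position and the types of all surviving crossings are unaffected by the deletion. The computation of the $\n$-values and the small divisibility argument are routine once Theorem~\ref{thm:cableUPoly} is in hand.
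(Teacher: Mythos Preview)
Your proposal is correct and follows essentially the same route as the paper: reduce to a fixed nanoword, use $\n(A_{i,j})=n\,\n(A)$ and $\n(C_i)=0$ to see that the \textcover{r} of $\cable{\alpha}{n}$ keeps or discards whole $\{A_{i,j}\}$ blocks according to whether $k\mid\n(A)$, and then invoke the mechanical, letter-by-letter nature of the cabling formula to identify the result with $\cable{(\cover{\alpha}{k})}{n}$. Your coprimality argument (that $n/d$ and $k=r/d$ are coprime) is in fact slightly more accurate than the paper's phrasing, and the ``locality'' step you flag as the delicate one is exactly what the paper dispatches with the remark about the ``mechanical way in which we can calculate the cable of a nanoword''.
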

\begin{proof}
It is enough to show that for some nanoword $\alpha$ representing $\Gamma$, 
$\cable{(\cover{\alpha}{k})}{n}$ and $\cover{(\cable{\alpha}{n})}{r}$ are
 isomorphic.
\par
First we consider $\cable{(\cover{\alpha}{k})}{n}$. 
We note that because of the mechanical way in which we can calculate the
 cable of a nanoword, we can calculate this nanoword from $\cable{\alpha}{n}$.
We can do this by deleting all letters
 $X_{i,j}$ in the nanoword $\cable{\alpha}{n}$ which came from a letter $X$ in $\alpha$
 such that $k$ does not divide $\n(X)$. Thus the $X_{i,j}$ that are
 retained from $\cable{\alpha}{n}$ are those such that $\n(X)$ is in $k\Z$.
\par
We now consider $\cover{(\cable{\alpha}{n})}{r}$. We first note that the
 $n-1$ crossings $C_i$ added to join up the $n$ arcs in the cabling all
 have $\n(C_i)=0$. Thus they are not deleted when we take the
 covering. We then note that any crossing $X$ in $\alpha$ gets transformed
 into $n^2$ crossings $X_{i,j}$ by the cabling 
 operation. As we noted in the proof of Theorem~\ref{thm:cableUPoly},
 $\n(X_{i,j})$ is equal to $n \n(X)$.
 If we can show that the letters $X_{i,j}$ that are retained after taking
 the \textcover{r} have the property that $\n(X)$ is in $k\Z$ then we have
 shown that $\cable{(\cover{\alpha}{k})}{n}$ and $\cover{(\cable{\alpha}{n})}{r}$ are
 isomorphic.
\par
If $r$ is zero then the letters $X_{i,j}$ appear in
 $\cover{(\cable{\alpha}{n})}{r}$ if and only if $n \n(X)$ is
 zero. As $n$ is non-zero, this can only happen if $\n(X)$ is
 zero. Thus $\n(X)$ is in $k\Z$.
\par
If $r$ is non-zero then the letters $X_{i,j}$ appear in
 $\cover{(\cable{\alpha}{n})}{r}$ if and only if $n \n(X)$ is
 divisible by $r$ or $\n(X)$ is zero. If $\n(X)$ is zero then $\n(X)$ is
 in $k\Z$. If $r$ divides $n \n(X)$ then, as $k$ divides $r$ and
 $k$ is coprime to $n$, $k$ must divide $\n(X)$. In other words $\n(X)$
 is in $k\Z$ and the proof is complete.
\end{proof}
We note that all cablings of virtual strings are
fixed points under some non-trivial covering.
We have seen that for a crossing $X_{i,j}$ in $\cable{\alpha}{n}$ derived from a
 crossing $X$ in $\alpha$,  $\n(X_{i,j})$ is equal to $n\n(X)$. 
Thus $\alpha$ is
 fixed under the \textcover{r} map if and only if $\cable{\alpha}{n}$ is
 fixed under the \textcover{rn} map. This means that, $\Gamma$ is in
 $\baseSet{r}$ if and only if $\cable{\Gamma}{n}$ is in $\baseSet{rn}$. In
 particular $\cable{\Gamma}{n}$ is in $\baseSet{n}$ as $\baseSet{1}$ contains
 all virtual strings.
\par
We now calculate the based matrix of an \textcable{n} of a virtual string
$\Gamma$. As before we take a labelled diagram $D$ of $\Gamma$ and
construct a labelled diagram of the cable $\cable{D}{n}$. We use the
notation for crossing labels that we used above.
\par
Recall that for the purposes of calculating the 
homological intersection number the loop at $A_{i,j}$ is equivalent to
the loop at $A$ in $D$ and $j-i$ parallel copies of $D$
(calculating in $\cyclic{n}$). The loop at $C_i$ is equivalent to
$(n-1)-i$ parallel copies of $D$. 
\par
We can calculate the based matrix of the \textcable{n} as follows:
\begin{equation*}
b(C_i,C_j) = (n-1-i)\left( (n-1-j) b_D(s,s) \right) = 0,
\end{equation*}
\begin{equation*}
b(A_{i,j},C_k) = (n-1-k)\left(b_D(A,s) + (j-i) b_D(s,s) \right)
= (n-1-k)\n(A)
\end{equation*}
and
\begin{align*}
b(A_{i,j},B_{k,l}) = & (l-k) \left( b_D(A,s) + (j-i) b_D(s,s) \right)
+ (j-i)b_D(s,B) + b_D(A,B) \\
=& b_D(A,B) + (l-k)\n(A) - (j-i)\n(B). 
\end{align*}
\begin{thm}
If $\Gamma$ and $\Gamma^\prime$ are virtual strings such that $P(\Gamma)$ and
 $P(\Gamma^\prime)$ are isomorphic, then $P(\cable{\Gamma}{n})$ is isomorphic to
 $P(\cable{\Gamma^\prime}{n})$ for all $n$.
\end{thm}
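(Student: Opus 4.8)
The plan is to prove the slightly stronger statement that the isomorphism class of $P(\cable{\Gamma}{n})$ is determined by $P(\Gamma)$ and $n$; the theorem then follows immediately. The starting point is the observation that the three formulas computed just above express the based matrix $M(\cable{D}{n})$ of the cable entirely in terms of $b_D$ (recall $\n(A)=b_D(A,s)$). So I would first package the cabling of based matrices as an operation $\Phi_n$ on abstract based matrices over $\Z$: given $(G,s,b)$, let $\Phi_n(G,s,b)$ have underlying set $\{s\}\cup\{A_{i,j}\mid A\in G\setminus\{s\},\ 0\le i,j\le n-1\}\cup\{C_k\mid 0\le k\le n-2\}$, with bilinear form given by exactly those three formulas (extended by skew-symmetry). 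One checks at once that $\Phi_n(N)$ is a genuine based matrix, that $M(\cable{D}{n})=\Phi_n(M(D))$, and that $\Phi_n$ carries isomorphic based matrices to isomorphic ones via $A_{i,j}\mapsto f(A)_{i,j}$, $C_k\mapsto C_k$.

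Since $M(D)$ reduces to $P(\Gamma)$ and $M(\cable{D}{n})$ reduces to $P(\cable{\Gamma}{n})$, and the reduction of a based matrix to a primitive one is confluent up to isomorphism (Section~\ref{sec:based_matrices}), everything comes down to the following claim: if $N^\prime$ is obtained from a based matrix $N$ by one of Turaev's three reducing operations, then $\Phi_n(N)$ can be carried to $\Phi_n(N^\prime)$ by a finite sequence of reducing operations. Indeed, iterating this along a reduction $M(D)=N_0\to\cdots\to N_t=P(\Gamma)$ shows that $\Phi_n(M(D))$ and $\Phi_n(P(\Gamma))$ have isomorphic primitive based matrices, i.e.\ $P(\cable{\Gamma}{n})$ is the primitive based matrix of $\Phi_n(P(\Gamma))$; the same holds for $\Gamma^\prime$, and then $P(\Gamma)\cong P(\Gamma^\prime)$ gives $\Phi_n(P(\Gamma))\cong\Phi_n(P(\Gamma^\prime))$ and hence $P(\cable{\Gamma}{n})\cong P(\cable{\Gamma^\prime}{n})$.

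To prove the claim one treats the three reducing operations separately. In each case $N^\prime$ comes from $N$ by deleting one letter $g$ (annihilating or core) or a complementary pair $\{g,h\}$, so $\Phi_n(N)$ and $\Phi_n(N^\prime)$ differ only in the block of elements $g_{i,j}$ (resp.\ $g_{i,j}$ and $h_{i,j}$), and the job is to strip that block off by reducing moves. Using the numerical relations the hypothesis forces ($\n(g)=0$ if $g$ is annihilating or a core element; $\n(h)=-\n(g)$ and $b(g,h)=\n(g)$ if $\{g,h\}$ is complementary), one reads off from the three formulas that: if $g$ is annihilating, each $g_{i,i}$ is an annihilating element of $\Phi_n(N)$ and each $\{g_{i,j},g_{j,i}\}$ with $i\neq j$ is a complementary pair; if $g$ is a core element, each $g_{i,j}$ with $j\equiv i-1\pmod n$ is a core element of $\Phi_n(N)$ and the remaining $g_{i,j}$ match into complementary pairs (for instance $g_{i,j}$ with $g_{j+1,\,i-1}$, indices modulo $n$); and if $\{g,h\}$ is complementary, the whole double block splits into $n^2$ complementary pairs (for instance $g_{i,j}$ with $h_{j+1,\,i}$). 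One then checks that these can be removed in a suitable order, the residue being exactly $\Phi_n(N^\prime)$; the ordering is harmless because, for the surviving block elements, the defining identity of each annihilating/core/complementary relation, evaluated at an index already (or simultaneously) removed, collapses to a triviality since $b(g,\cdot)$, $\n(g)$ and $b(g,h)$ are all under control.

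The main obstacle I expect is the complementary-pair case: one has to cancel a double block of $2n^2$ elements, and the delicate points are to pin down the pairing between the $g$-copies and the $h$-copies that is complementary \emph{for the cable form} and to verify that the resulting $n^2$ pairs can be deleted in any order --- equivalently, that each stays complementary after any subset of the others is removed. This is a finite bookkeeping computation with the residues $\overline{j-i}\in\{0,\dots,n-1\}$ and the identity $\overline{m}+\overline{-m}=n$ for $m\not\equiv0\pmod n$; the annihilating and core cases are then similar but lighter.
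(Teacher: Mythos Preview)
Your proposal is correct and follows essentially the same approach as the paper's proof. Both arguments package the three intersection-number formulas as an operation on abstract based matrices (your $\Phi_n$, the paper's $Q(P(\alpha),n)$), and both show that a single reducing move on the input based matrix induces a batch of $n^2$ or $2n^2$ reducing moves on the cable matrix, so that the primitive based matrix of the cable is determined by the primitive based matrix of the original; your explicit pairings in the annihilating, core, and complementary cases are exactly the ones the paper uses (the paper groups the $A_{i,j}$ by the residue of $j-i$ and pairs residue $m$ with residue $n-m$, $n-2-m$, or $n-1-m$ respectively, which matches your index-level bijections), and the ordering worry you flag at the end is in fact harmless because the defining conditions for annihilating, core, and complementary elements only become weaker when other elements are removed.
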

\begin{proof}
Let $\alpha$ be a nanoword representing $\Gamma$.
When we calculate $P(\alpha)$ from $M(\alpha)$ we make a series of reductions
 removing one or two elements at each step. For each such step we will show
 that there is an equivalent set of reduction moves on $M(\cable{\alpha}{n})$
 which allows us to remove $n^2$ or $2n^2$ elements. We call the matrix
 derived from $M(\cable{\alpha}{n})$ in this way
 $Q(\cable{\alpha}{n})$. Note that $Q(\cable{\alpha}{n})$ is not
 necessarily $P(\cable{\alpha}{n})$. 
\par
Assume that $A$ is an annihilating element in $M(\alpha)$. Then $\n(A)$ is $0$
 and $b(A,B)=0$ for all $B$ in $\alpha$. For each crossing $A_{i,j}$ in
 $\cable{\alpha}{n}$ which is derived from $A$ we have
\begin{equation*}
b(A_{i,j},C_k) = 0
\end{equation*}
and
\begin{equation*}
b(A_{i,j},B_{k,l}) =  - (j-i)\n(B) = -(j-i)\n(B_{k,l}). 
\end{equation*}
We note that values of $j-i$ run from $0$ to $n-1$ and there are
 $n$ pairs $(i,j)$ for which $j-i$ is the same.
We thus have $n$ elements $X_i$ derived from $A$ such that
\begin{equation*}
b(X_i,B_{k,l}) =  -i\n(B_{k,l}) 
\end{equation*}
for each $i$ running from $0$ to $n-1$.
Then the $n$ $X_0$ elements are all annihilating elements in
 $M(\cable{\alpha}{n})$.
We can pair all the remaining $X_i$ elements with $X_{n-i}$
 elements. Note that when $n$ is even, we pair the $X_{n/2}$ elements
 with themselves, but as there are an even number of such elements this
 is always possible. These pairs are complementary elements in
 $M(\cable{\alpha}{n})$ because
\begin{equation*}
b(X_i,Y) + b(X_{n-i},Y) =  -(i + n - i)\n(Y) = -n\n(Y) = b(s,Y)
\end{equation*}
for all crossings $Y$ corresponding to crossings in $\alpha$ and
\begin{equation*}
b(X_i,C_j) + b(X_{n-i},C_j) = 0 = b(s,C_j)
\end{equation*}
for all $j$.
\par
If $A$ is a core element of $M(\alpha)$ we can make a similar calculation. We
 find that we have $n$ core elements in $M(\cable{\alpha}{n})$ which are
 derived from $A$. We also find that the rest of the crossings derived
 from $A$ can be paired to form complementary pairs in
 $M(\cable{\alpha}{n})$.
\par
If $A$ and $B$ are complementary elements in $M(\alpha)$ then, we can pair
 crossings derived from $A$ and $B$ in a particular way to make
 complementary pairs in $M(\cable{\alpha}{n})$. Again the calculation is
 similar.
\par
We now note that $Q(\cable{\alpha}{n})$ can be defined algebraically in
 terms of $P(\alpha)$. That is, we can define $Q(\cable{\alpha}{n})$ in terms of
 $P(\alpha)$ without reference to $\alpha$ itself. We do this as follows.
\par
We define a based matrix $Q(P(\alpha),n)$ which is a triple $(G_Q,s,b_Q)$
 from the based matrix $P(\alpha)$ with triple $(G,s,b)$.
$G_Q$ consists of $s$, 
$n^2$ elements $X_{i,j}$ for each element $X$ of
$G-\lbrace s \rbrace$ in $P(\alpha)$ and $n-1$ elements $C_i$ ($i$ running
 from $0$ to $n-1$).
We define $\n(X_{i,j}) = n\n(X)$ and $\n(C_i)=0$. 
We then define
\begin{equation*}
b_Q(X_{i,j},Y_{k,l}) = b(X,Y) + k\n(X) - i\n(Y),
\end{equation*}
\begin{equation*}
b_Q(X_{i,j},C_k) = (k+1)\n(Y)
\end{equation*}
and
\begin{equation*}
b_Q(C_i,C_j) = 0.
\end{equation*}
It is easy to check that the based matrix $Q(P(\alpha),n)$ is isomorphic to
 $Q(\cable{\alpha}{n})$.  
\par
Now if $P(\Gamma^\prime)$ is
 isomorphic to $P(\Gamma)$ then $Q(\cable{\Gamma^\prime}{n})$ is isomorphic to
 $Q(\cable{\Gamma}{n})$ and thus the result follows.
\end{proof}
\par
The implication of this theorem is that there is no benefit in
calculating based matrix derived invariants for cables of virtual
strings in order to distinguish the virtual strings themselves.
\par
We note that in $Q(\cable{\alpha}{n})$, for any $Y$ not equal to $C_j$ for some $j$,
\begin{equation*}
b(C_i,Y) + b(C_{n-i-1},Y) = -n \n(Y) = b(s,Y)
\end{equation*}
and
\begin{equation*}
b(C_i,C_j) + b(C_{n-i-1},C_j) = 0 = b(s,C_j).
\end{equation*}
Thus $C_i$ and $C_{n-i-1}$ form a complementary pair in
$Q(\cable{\alpha}{n})$ if $i$ does not equal $n-i-1$. If $n$ is odd then we
can pair up the $n-1$ letters $C_i$ to form complementary pairs. If $n$ is even
then we can pair up $n-2$ letters $C_i$ to form complementary pairs and we get
left with a single element $C_i$ with $i$ equal to $(n-2)/2$. Note that
if all other elements have been eliminated from $Q(\cable{\alpha}{n})$ then
this final $C_i$ is a core element.
From this discussion we have the following results.
\begin{prop}
For any virtual string $\Gamma$ and any natural number $n$,
\begin{equation*}
\rho(\cable{\Gamma}{n}) \leq n^2\rho(\Gamma) + \delta_n
\end{equation*}
where $\delta_n$ is $1$ if $n$ is even and $0$ if $n$ is odd.
\end{prop}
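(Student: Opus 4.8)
The plan is to read the bound straight off the preceding theorem together with the remarks that follow it, so the proof is short. Pick a nanoword $\alpha$ representing $\Gamma$. The theorem shows that $M(\cable{\alpha}{n})$ reduces, by a finite sequence of Turaev's reducing operations, to the based matrix $Q(\cable{\alpha}{n})$, which is isomorphic to $Q(P(\alpha),n)$; the element set of the latter consists of $s$, the $n-1$ elements $C_0,\dots,C_{n-2}$, and $n^2$ elements $X_{i,j}$ for each of the $\rho(\Gamma)$ non-special elements $X$ of $P(\alpha)$. Hence $Q(\cable{\alpha}{n})$ has exactly $1 + n^2\rho(\Gamma) + (n-1)$ elements, all distinct. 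Since $P(\cable{\Gamma}{n})$ is obtained from $M(\cable{\alpha}{n})$ by reducing operations and, by Turaev's uniqueness result, is up to isomorphism the primitive based matrix produced by \emph{any} such sequence, we may continue reducing $Q(\cable{\alpha}{n})$ until it is primitive and thereby reach $P(\cable{\Gamma}{n})$; each reduction removes at least one element, so it suffices to exhibit a reduction path from $Q(\cable{\alpha}{n})$ that deletes at least $n-1-\delta_n$ elements.

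For this I would invoke the complementary-pair structure among the $C_i$ established in the discussion immediately preceding the proposition: the letters $C_i$ can be organized into complementary pairs in $Q(\cable{\alpha}{n})$, and the defining identity for each such pair holds against \emph{every} element of the matrix, hence is inherited by the restriction to any sub-based-matrix still containing the pair. These complementary pairs may therefore be removed one after another without obstruction. As there are $n-1$ letters $C_i$, and $n-1$ is even when $n$ is odd, all of them can be deleted in this way; when $n$ is even, $n-1$ is odd, so exactly one $C_i$ is left unpaired and all but one are deleted. In either case the resulting based matrix has $1 + n^2\rho(\Gamma) + \delta_n$ elements and is still reachable from $M(\cable{\alpha}{n})$ by reducing operations.

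Reducing this matrix further to a primitive one produces a matrix isomorphic to $P(\cable{\Gamma}{n})$ and cannot increase the number of elements, so $P(\cable{\Gamma}{n})$ has at most $1 + n^2\rho(\Gamma) + \delta_n$ elements. Subtracting one gives $\rho(\cable{\Gamma}{n}) \le n^2\rho(\Gamma) + \delta_n$, which is the claim.

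I expect essentially no serious obstacle here beyond the bookkeeping: one must confirm that the element count of $Q(\cable{\alpha}{n})$ is exactly as stated — immediate from the definition of $Q(P(\alpha),n)$ — and that deleting the complementary $C$-pairs neither obstructs nor is obstructed by whatever further reductions are available among the $X_{i,j}$. That commutativity is automatic, because the complementary-pair relation for the $C_i$'s is valid against all elements of $G_Q$ and so survives every restriction. Note finally that in the even case the leftover element $C_{(n-2)/2}$ need not actually be reducible (it becomes a core element only once every $X_{i,j}$ has been removed), which is precisely why the term $\delta_n$ is present in the bound.
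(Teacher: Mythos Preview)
Your argument is correct and is exactly the approach the paper intends: the proposition is stated immediately after the observation that the $C_i$ pair off as complementary elements in $Q(\cable{\alpha}{n})$, and the paper's ``proof'' is simply the sentence ``From this discussion we have the following results.'' Your write-up just makes that discussion explicit, including the element count of $Q(P(\alpha),n)$ and the invocation of Turaev's uniqueness of the primitive reduction, so nothing further is needed.
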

\begin{prop}
If $\rho(\Gamma)$ is $0$ then $\rho(\cable{\Gamma}{n})$ is also $0$.
\end{prop}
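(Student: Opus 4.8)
The hypothesis $\rho(\Gamma)=0$ means that $P(\Gamma)$ has only the special element $s$, so $P(\Gamma)$ is isomorphic to the based matrix $(\lbrace s\rbrace,s,0)$. The plan is to fix a nanoword $\alpha$ representing $\Gamma$ and to run the construction $Q(P(\alpha),n)$ from the proof of the theorem above, observing that for such a degenerate $P(\alpha)$ it collapses entirely.

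First I would note that, since $P(\alpha)$ has no letter besides $s$, the based matrix $Q(P(\alpha),n)$ has no elements $X_{i,j}$; its underlying set is just $\lbrace s\rbrace$ together with the $n-1$ elements $C_i$. The defining relations for $Q(P(\alpha),n)$ then give $\n(C_i)=0$ and $b_Q(C_i,C_j)=0$ for all $i,j$, and hence also $b_Q(s,C_i)=-\n(C_i)=0$. So $Q(P(\alpha),n)$ is the zero based matrix on $\lbrace s,C_0,\dotsc,C_{n-2}\rbrace$, and every $C_i$ is an annihilating element. Removing them all by the first reducing operation yields the primitive based matrix $(\lbrace s\rbrace,s,0)$.

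Finally I would invoke the theorem above: $Q(\cable{\alpha}{n})$ is isomorphic to $Q(P(\alpha),n)$, and $P(\cable{\alpha}{n})$ is obtained from $Q(\cable{\alpha}{n})$ by further reducing operations. Since the primitive based matrix resulting from such a reduction is unique up to isomorphism (Section~\ref{sec:based_matrices}), $P(\cable{\alpha}{n})$ is isomorphic to $(\lbrace s\rbrace,s,0)$, so $\rho(\cable{\Gamma}{n})=0$. There is no real obstacle here; the one point to watch is that the element $C_i$ which forces the term $\delta_n$ in the preceding proposition is, in the present situation, a genuine annihilating element (the whole matrix $Q$ being zero) and so is removed along with the rest, leaving no correction term. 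For odd $n$ the statement is in any case immediate from the preceding proposition, since there $\delta_n=0$.
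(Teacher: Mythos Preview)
Your proof is correct and follows essentially the paper's approach: both arguments work inside $Q(P(\alpha),n)\cong Q(\cable{\alpha}{n})$ from the preceding theorem and then appeal to the uniqueness of the primitive reduction. The only cosmetic difference is that the paper eliminates the $C_i$ by pairing them as complementary elements (with the leftover $C_{(n-2)/2}$, when $n$ is even, becoming a core element once nothing else remains), whereas you observe more directly that in the degenerate case the entire matrix $Q$ is zero, so every $C_i$ is annihilating.
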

\bibliography{mrabbrev,ccc}
\bibliographystyle{hamsplain}
\end{document}